\font\goth=eufm10
\font\bigmath=cmsy10 scaled \magstep 2
\newcommand{\bigtimes}{\hbox{\bigmath \char'2}}
\newcommand{\gc}{\hbox{\goth c}}
\newcommand{\cl}{c\ell}
\newcommand{\mod}{\hbox{\rm mod}}
\newcommand{\emp}{\emptyset}
\newcommand{\bea}{\mathbb A}
\newcommand{\ben}{\mathbb N}
\newcommand{\beq}{\mathbb Q}
\newcommand{\bez}{\mathbb Z}
\newcommand{\A}{\mathbb A}
\newcommand{\nhat}[1]{\{1,2,\ldots,#1\}}
\newcommand{\ohat}[1]{\{0,1,\ldots,#1\}}
\newcommand{\pf}{{\mathcal P}_f}
\newcommand{\Jo}{J_{T_1}(A_1)}
\newcommand{\Jt}{J_{T_2}(A_2)}
\newcommand{\Jot}{J_{T_1\times T_2}(A_1\times A_2)}
\newcommand{\varwords}[2]{\widetilde S\hskip -1 pt\Big(\hskip -4 pt\hbox{$\begin{array}{c}#1\\ #2
\end{array}$}\hskip -4 pt\Big)}
\newcommand{\varwordspl}[2]{S\hskip -1 pt\Big(\hskip -4 pt\hbox{$\begin{array}{c}#1\\ #2
\end{array}$}\hskip -4 pt\Big)}
\newcommand{\varwordsj}[3]{\widetilde S^{(#1)}\hskip -2 pt\Big(\hskip -4 pt\hbox{$\begin{array}{c}#2\\ #3
\end{array}$}\hskip -4 pt\Big)}
\newtheorem{theorem}{Theorem}[section]
\newtheorem{corollary}[theorem]{Corollary}
\newtheorem{lemma}[theorem]{Lemma}
\newtheorem{question}[theorem]{Question}
\theoremstyle{definition}
\newtheorem{definition}[theorem]{Definition}
\title{Combining extensions of the Hales-Jewett\\
       Theorem with Ramsey Theory\\
in other structures}
\date{}
\author{Neil Hindman
        \footnote{Department of Mathematics,
                 Howard University,
                  Washington, DC 20059, USA.\hfill\break
                  {\tt nhindman@aol.com}}
\and
Dona Strauss
        \footnote{Department of Pure Mathematics,
        University of Leeds,
        Leeds LS2 9J2, UK. \hfill\break
        {\tt d.strauss@hull.ac.uk}}
\and
Luca Q. Zamboni
        \footnote{Universit\'e de Lyon,
Universit\'e Lyon 1, CNRS UMR 5208,
Institut Camille Jordan,
43 boulevard du 11 novembre 1918,
F69622 Villeurbanne Cedex, France\hfill\break
        {\tt zamboni@math.univ-lyon1.fr}
\hfill\break
This work was supported in part by the LABEX MILYON (ANR-10-LABX-0070) of 
Universit\'e de Lyon, within the program ``Investissements d'Avenir'' 
(ANR-11-IDEX- 0007) operated by the French National Research Agency (ANR).
}
}
\begin{document}

\maketitle

\begin{abstract} The Hales-Jewett Theorem states that given any finite nonempty set $\A$ and any finite coloring of the 
free semigroup $S$ over the alphabet $\A$ there is a {\it variable word\/} over $\A$ all of whose instances
are the same color. This theorem has some extensions 
involving several distinct variables occurring in the variable word. We show that,
when combined with a sufficiently well behaved homomorphism, 
the relevant variable word simultaneously satisfies a
Ramsey-Theoretic conclusion in the other structure. As an example we show that
if $\tau$ is the homomorphism from the set of variable words into the natural numbers which 
associates to each variable word $w$ the number of occurrences of the
variable in $w$, then given any finite coloring of $S$ and any infinite sequence of natural numbers, 
there is a variable word $w$ whose instances are monochromatic and $\tau(w)$ is a sum of distinct members
of the given sequence.

Our methods rely on the algebraic structure of the Stone-\v Cech compactification
of $S$ and the other semigroups that we consider.  We show for example
that if $\tau$ is as in the paragraph above, there is a compact
subsemigroup $P$ of $\beta\ben$ which contains all of the idempotents
of $\beta\ben$ such that, given any $p\in P$, any
$A\in p$, and any finite coloring of $S$, there is a variable word
$w$ whose instances are monochromatic and $\tau(w)\in A$.

We end with a new short algebraic proof of an infinitary extension of the 
Graham-Rothschild Parameter Sets Theorem. 
\end{abstract}

\section{Introduction}

Given a nonempty set $A$ (or {\it alphabet}) we let $A^+$ be the set of all finite
words $w=a_1a_2\cdots a_n$ with $n\geq 1$ and $a_i\in A$. The quantity $n$ is called the {\it length\/} of $w$ and 
denoted $|w|$. The set $A^+$ is naturally a semigroup under the operation of concatenation of words, known as
the {\it free semigroup\/} over $A$.  For each $u\in A^+$ and $a\in A$, we let $|u|_a$ be the number of occurrences of $a$ in $u$. 
For $u,w \in A^+$ we say $u$ and $w$ are Abelian equivalent, and write 
$u \sim_{Ab}w$, whenever $|u|_a=|w|_a$ for all $a\in A$. As is customary, we will identify the elements of
$A$ with the length one words over $A$.

Throughout this paper we will let $\A$ be a nonempty set, let $S_0=\A^+$ be the free semigroup over $\A$,  
and let $v$ (a {\it variable\/}) be a letter not belonging to $\A$. By a {\it variable word\/} over $\A$ we mean a  word
$w$ over $\A\cup\{v\}$ with $|w|_v\geq 1$. We let $S_1$ be the set
of variable words over $\A$. If $w\in S_1$ and $a\in \A$, then 
$w(a)\in S_0$ is the result of replacing each occurrence of $v$ by $a$.
For example if $\A=\{a,b,c\}$ and $w=avbvva$, then $w(a)=aabaaa$ while
$w(c)=acbcca$. A {\it finite coloring\/} of a set $X$ is a function
from $X$ to a finite set.  A subset $A$ of $X$ is {\it monochromatic\/}
if the function is constant on $A$.

\begin{theorem}[A. Hales and R. Jewett]\label{HJth} Assume that $\A$ is finite.
For each finite coloring of $S_0$ there exists a variable word
$w$ such that $\{w(a):a\in \A\}$ is monochromatic.\end{theorem}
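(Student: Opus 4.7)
The plan is to prove the theorem via the algebraic structure of the Stone-\v Cech compactification $\beta T$, where $T=(\A\cup\{v\})^+$ is the free semigroup over the enlarged alphabet, and to produce a single idempotent ultrafilter concentrated on $S_1$ whose image in $\beta S_0$ does not depend on which substitution $v\mapsto a$ is applied.

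For each $a\in\A$, the substitution map $\pi_a\colon T\to S_0$ sending every $v$ to $a$ is a semigroup homomorphism that fixes $S_0$ pointwise, and it extends continuously to a semigroup homomorphism $\tilde\pi_a\colon\beta T\to\beta S_0$ of the compact right-topological semigroups. Define
$$E=\{p\in\beta T:\tilde\pi_a(p)=\tilde\pi_b(p)\text{ for all }a,b\in\A\}.$$
Since each $\tilde\pi_a$ is continuous into the Hausdorff space $\beta S_0$ and is a semigroup homomorphism, $E$ is a closed subsemigroup of $\beta T$; since $S_1$ is closed under concatenation, $\overline{S_1}\subseteq\beta T$ is a closed subsemigroup as well. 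Assume for the moment that $E\cap\overline{S_1}$ is nonempty. Ellis's theorem then yields an idempotent $p\in E\cap\overline{S_1}$; writing $q=\tilde\pi_a(p)$ for the common image, any finite coloring of $S_0$ has one color class $A$ in $q$, so $\pi_a^{-1}[A]\in p$ for each $a\in\A$. Because $\A$ is finite, $B=S_1\cap\bigcap_{a\in\A}\pi_a^{-1}[A]\in p$ is nonempty, and any $w\in B$ is a variable word with $\{w(a):a\in\A\}\subseteq A$.

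The main obstacle is exhibiting a member of $E\cap\overline{S_1}$; this is where the genuine combinatorial content of Hales--Jewett resides. My plan is to fix an idempotent $r\in\beta S_0$ (supplied by Ellis's theorem) and inductively construct a sequence of variable words $w_1,w_2,\ldots\in S_1$ so that every concatenation of finitely many $w_i$'s, followed by substituting any $a\in\A$ for $v$, lies in a prescribed member of $r$. The finiteness of $\A$ is crucial here, since it keeps the number of simultaneous constraints at each stage finite, and the idempotency of $r$ is what allows the construction to be extended indefinitely. Any cluster point in $\beta T$ of such a sequence lies in $E\cap\overline{S_1}$. Packaging this cleanly will likely require either an auxiliary induction on $|\A|$ (mirroring the original Hales--Jewett argument) or a passage to the minimal ideal of an enriched compact right-topological semigroup that carries a permutation action of $\A$. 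Once the existence of the required idempotent is in hand, the Ramsey conclusion follows from the ultrafilter framework essentially for free.
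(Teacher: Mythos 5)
Your structural setup is sound and, modulo notation, is essentially the same framework the paper uses (not in its one-line citation proof of Theorem~\ref{HJth}, but in the algebraic derivation it gives immediately after Theorem~\ref{blass}). Once you have an ultrafilter $p\in E\cap\overline{S_1}$ the final deduction to a monochromatic variable word is correct, and in fact it does not even use that $p$ is idempotent. The problem is that all of the difficulty is concentrated in the step you defer --- showing $E\cap\overline{S_1}\neq\emp$ --- and the plan you sketch for it does not work.

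First, the claim that ``any cluster point in $\beta T$ of such a sequence lies in $E\cap\overline{S_1}$'' is false. A cluster point $p$ of $\langle w_n\rangle$ with each $w_n\in S_1$ certainly lies in $\overline{S_1}$, but then $\tilde\pi_a(p)$ is a cluster point of $\langle w_n(a)\rangle_n$ and $\tilde\pi_b(p)$ is a cluster point of $\langle w_n(b)\rangle_n$; these need not coincide. Arranging that all the values $\big(\prod_{i\in F}w_i\big)(a)$ land in a fixed $B\in r$ only guarantees that each $\tilde\pi_a(p)$ contains $B$, which is far from equality of ultrafilters. Second, the inductive construction itself is circular: already at the first stage you must find $w_1\in S_1$ with $w_1(a)\in B^\star(r)$ for every $a\in\A$ simultaneously, and producing a single variable word all of whose substitution instances lie in a prescribed member of an idempotent ultrafilter is precisely (the central-sets strengthening of) the theorem you are proving.

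The missing idea, which is Theorem~\ref{blass}(1) of the paper, is to obtain the equality of the images $\tilde\pi_a(q)$ for free from \emph{minimality}, rather than by construction. Take $p$ to be a \emph{minimal} idempotent of $\beta S_0$. Viewing $p\in\beta S_0\subseteq\beta T$, pick any idempotent $q\in\beta T$ with $q\leq p$. Since $\pi_a$ is the identity on $S_0$, $\tilde\pi_a(p)=p$; applying $\tilde\pi_a$ to $qp=pq=q$ gives $\tilde\pi_a(q)\,p=p\,\tilde\pi_a(q)=\tilde\pi_a(q)$, so $\tilde\pi_a(q)$ is an idempotent of $\beta S_0$ with $\tilde\pi_a(q)\leq p$, and minimality of $p$ in $\beta S_0$ forces $\tilde\pi_a(q)=p$ for every $a\in\A$. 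Thus $q\in E$ automatically. Moreover, if you choose $q$ minimal in $\beta T$, then $q\in K(\beta T)\subseteq\overline{S_1}$, because $S_1$ is a two-sided ideal of $T=S_0\cup S_1$. This gives $q\in E\cap\overline{S_1}$ with no induction and no reference to any particular sequence of variable words. Neither of your two proposed patches (an induction on $|\A|$, or passage to a minimal ideal of an ``enriched'' semigroup) points at this observation; it is the use of a \emph{minimal} idempotent of $\beta S_0$ together with the fact that each $\tilde\pi_a$ fixes $\beta S_0$ that closes the gap.
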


\begin{proof} \cite[Theorem 1]{HJ}. \end{proof}

Some extensions of the Hales-Jewett Theorem, including for example Theorem~\ref{ndimHJ} or the Graham-Rothschild Parameter Sets Theorem \cite{GR},  involve the notion of {\it $n$-variable words\/}.

\begin{definition}\label{defnvar} Let $n\in\ben$ and $v_1,v_2,\ldots ,v_n$ be distinct variables which are not members of $\A$.
\begin{itemize}\item[(a)] An {\it $n$-variable word\/} over $\A$ is a word $w$ over
$\A\cup\{v_1,v_2,\ldots,v_n\}$ such that $|w|_{v_i}\geq 1$ for each  $i\in\nhat{n}$. 
\item[(b)] If $w$ is an $n$-variable word over $\A$ and $\vec x=(x_1,x_2,\ldots,x_n)$,
then $w(\vec x)$ is the result of replacing each occurrence of $v_i$ in $w$ by $x_i$ for
each $i\in\nhat{n}$. 
\item[(c)] If $w$ is an $n$-variable word over $\A$ and $u=l_1l_2\cdots l_n$ is a length $n$
word, then $w(u)$ is the result of replacing each occurrence of $v_i$ in $w$ by $l_i$ for
each $i\in\nhat{n}$. 
\item[(d)] A {\it strong $n$-variable word\/} is an $n$-variable word such that
for each $i\in\nhat{n-1}$, the first occurrence of $v_i$ precedes the first
occurrence of $v_{i+1}$.
\item[(e)] $S_n$ is the set of $n$-variable words over $\A$ and $\widetilde S_n$ is the set of
strong $n$-variable words over $\A$.
\item[(f)] $\widetilde S_0=S_0$.
\item[(g)] If $m\in\omega=\ben\cup\{0\}$ and $m<n$, then $\varwords nm$ is the set
of $u\in \widetilde S_m$ such that $|u|=n$.
\item[(h)] If $m\in\omega$ and $m<n$, then $\varwordspl nm$ is the set
of $u\in S_m$ such that $|u|=n$.

\end{itemize}
\end{definition}

The notation above does not reflect the dependence on the alphabet $\A$.

We note that if $m,n \in \omega$ and $m<n$, then for each $w\in \widetilde S_n$ and each $u\in\varwords nm$, the word $w(u)$ belongs to $\widetilde S_m$.

The following is a first simple example of a  multivariable extension of the Hales-Jewett Theorem: 

\begin{theorem}\label{ndimHJ} Assume that $\A$ is finite. Let
$S_0$ be finitely colored and let $n\in\ben$.  There exists $w\in S_n$ 
such that $\{w(\vec x):\vec x\in \A^n\}$ is monochromatic.\end{theorem}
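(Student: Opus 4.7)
My plan is to reduce to the one-variable Hales--Jewett theorem (Theorem~\ref{HJth}) by passing to the enlarged alphabet $B=\A^n$ whose letters are $n$-tuples from $\A$. Any word of length $k$ over $B$ can be identified with a word of length $kn$ over $\A$ by concatenating the coordinates of its successive letters; under this identification $B^+$ embeds as a sub-semigroup of $S_0=\A^+$, and the given finite coloring of $S_0$ restricts to a finite coloring of $B^+$.

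Applying Theorem~\ref{HJth} to the finite alphabet $B$ with this inherited coloring produces a variable word $W$ over $B$ (with a single new variable $v$) such that $\{W(\vec a):\vec a\in B\}$ is monochromatic. To build the desired $w\in S_n$, I expand $W$ back into a word over $\A\cup\{v_1,v_2,\ldots,v_n\}$: each letter $(a_1,\ldots,a_n)\in B$ occurring in $W$ is replaced by the length-$n$ $\A$-word $a_1 a_2\cdots a_n$, while each occurrence of $v$ in $W$ is replaced by the block $v_1 v_2\cdots v_n$. Since $|W|_v\ge 1$, the resulting word $w$ satisfies $|w|_{v_i}=|W|_v\ge 1$ for every $i\in\nhat n$, so $w\in S_n$.

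The final step is a routine compatibility check: for each $\vec x=(x_1,\ldots,x_n)\in\A^n$, the instance $w(\vec x)\in S_0$ coincides, under the identification above, with $W(\vec x)\in B^+$, because in either computation each original $v$ of $W$ contributes the length-$n$ block $x_1 x_2\cdots x_n$, and each $B$-letter of $W$ contributes its coordinate expansion. Hence $\{w(\vec x):\vec x\in\A^n\}$, as a subset of $S_0$, is the image of $\{W(\vec a):\vec a\in B\}$ and is therefore monochromatic. There is no real obstacle here; the one point demanding care is to keep straight the embedding $B^+\hookrightarrow S_0$ together with the two substitution operations, so that this last compatibility holds on the nose.
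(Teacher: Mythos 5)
Your proof is correct and follows essentially the same route as the paper's own one-line argument: apply the single-variable Hales–Jewett theorem to the alphabet $\A^n$ and expand the resulting variable word by replacing each occurrence of $v$ with the block $v_1v_2\cdots v_n$ (and each $\A^n$-letter with its concatenated $\A$-expansion). You simply spell out the embedding $(\A^n)^+\hookrightarrow S_0$ and the compatibility of the two substitutions, which the paper leaves implicit.
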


Theorem~\ref{ndimHJ} follows immediately from Theorem \ref{HJth} applied to
the alphabet $\A^n$, replacing each occurrence of $v_1$ in the
variable word over $\A^n$ by $v_1v_2\cdots v_n$.  It is also a consequence of  Theorem \ref{matrix}, which constitutes one of the main results of this paper.
(See the paragraph immediately following Theorem \ref {charIPR}.)
Theorem~\ref{ndimHJ} also follows directly from Theorem~\ref{blass} later in this section 
which we regard as an algebraic extension of Theorem~\ref{ndimHJ}. 

It is natural to ask the following question. {\it Assume that $\A$ is finite. Let
$S_\infty$ be the set of infinite words over $\A\cup\{v_i:i\in\ben\}$ in which
each $v_i$ occurs and assume that $\A^{\ben}$ is finitely colored. Must there
exist $w\in S_\infty$ such that $\{w(\vec x):\vec x\in\A^{\ben}\}$ is monochromatic,
where $w(\vec x)$ has the obvious meaning?\/} As long as $|\A|\geq 2$, the answer is
easily seen to be ``no", using a standard diagonalization argument:  One has that
$ |\A^\ben|=|S_\infty|=\gc$, so one may inductively color two elements of
$\A^\ben$ for each $w \in S_\infty$ so that there exist $\vec x$ and $\vec y$ in 
$\A^\ben$ with the color of $w(\vec x)$ and $w(\vec y)$ different. 
(When one gets to $w$, fewer than $\gc$ things have been colored and
there are $\gc$ distinct values of $w(\vec x)$ possible.)

The following simplified version of the Graham-Rothschild Parameter Sets Theorem constitutes yet another fundamental multivariable extension
of the Hales-Jewett Theorem. It was shown in \cite[Theorem 5.1]{CHS} that the full version as stated in 
\cite{GR} can be easily derived from the version stated here.

\begin{theorem}[R. Graham and B. Rothschild]\label{GRth} Assume that $\A$ is finite. 
Let $m,n\in\omega$ with $m<n$
and let $\widetilde S_m$ be finitely colored.  There exists $w\in \widetilde S_n$ such that
$\{w(u):u\in\varwords nm\}$ is monochromatic.\end{theorem}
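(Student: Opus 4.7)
My approach would be induction on $n - m \geq 1$, using Theorem~\ref{HJth} (and its multivariable extension Theorem~\ref{ndimHJ}) as the engine.

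For the base case $n - m = 1$: when $m = 0$, this is Theorem~\ref{ndimHJ}. When $m \geq 1$, I would enumerate the finite set $\varwords{m+1}{m} = \{u^{(1)}, \ldots, u^{(K)}\}$ and iterate Hales--Jewett $K$ times. At stage $j$, I maintain a variable word on which $\chi$ has been made constant along substitutions by $u^{(1)}, \ldots, u^{(j)}$. Each transition applies Theorem~\ref{HJth} to the enlarged alphabet $\A \cup \{v_1, \ldots, v_m\}$, with an auxiliary coloring tracking the $\chi$-value of substituting $u^{(j+1)}$ into the candidate word.

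For the inductive step $n - m > 1$, I would ``tower'' the recursion: apply the case $n - m - 1$ (namely, the $(m+1, n)$-version) to the derived coloring of $\widetilde S_{m+1}$ sending $z$ to the tuple $(\chi(z(u'')))_{u'' \in \varwords{m+1}{m}}$. This yields $w \in \widetilde S_n$ such that $\chi(w(u')(u''))$ depends only on $u''$, not on $u' \in \varwords{n}{m+1}$. I would then combine this with the base case, invoking the substitution identity $w(u')(u'') = w(u'(u''))$ to recover monochromaticity of $\{w(u) : u \in \varwords{n}{m}\}$.

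The \textbf{main obstacle} is twofold: first, setting up the auxiliary colorings at each stage so that Hales--Jewett monochromaticity of the auxiliary problem genuinely yields $\chi$-monochromaticity of $\{w(u) : u \in \varwords{n}{m}\}$, since the substitutions $u \in \varwords{n}{m}$ have varied internal structure (different duplicated variables, different positions); and second, preserving the \emph{strong} property throughout, since Hales--Jewett directly produces only $n$-variable words with no ordering constraint. The second issue I would handle by a renaming argument: any $n$-variable word becomes strong after a permutation of variable names, and this permutation corresponds to a relabeling on $\varwordspl{n}{m}$ that preserves the monochromatic set up to bijection.
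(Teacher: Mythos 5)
The paper's proof of this theorem is a citation to Graham and Rothschild's original work (\cite[Section 7]{GR}), and the paper states explicitly, a few paragraphs below, that ``the Graham--Rothschild Parameter Sets Theorem does not appear to be deducible directly from the Hales--Jewett Theorem; at least we know of no such proof.'' Your proposal is exactly such a derivation, and the gap sits in your base case $n=m+1$ with $m\geq 1$. Applying Theorem~\ref{HJth} over the enlarged alphabet $\A\cup\{v_1,\ldots,v_m\}$, with $v_{m+1}$ playing the role of the HJ variable, controls precisely the substitutions $u''=v_1v_2\cdots v_m\,l$ with $l\in\A\cup\{v_1,\ldots,v_m\}$, i.e., those in $\varwords{m+1}{m}$ whose extra letter sits in the last position. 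But $\varwords{m+1}{m}$ also contains words such as $v_1v_1v_2\cdots v_m$, which acts on $w$ by sending $v_1\mapsto v_1$, $v_2\mapsto v_1$, and $v_{j+1}\mapsto v_j$ for $j\geq 2$, and words such as $a\,v_1\cdots v_m$ with $a\in\A$, which sends $v_1\mapsto a$ and shifts the rest. These modify the roles of $v_1,\ldots,v_m$, which HJ must treat as fixed alphabet letters; no HJ instance can merge two letters of the ground alphabet or replace one by a constant. Iterating HJ does not repair this, because there is no composition of a fresh HJ variable word with your stage-$j$ candidate that preserves monochromaticity over the substitutions already handled while creating room to handle the next one: the new HJ variable and the already-constrained variables of the candidate occupy incompatible positions. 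The ``main obstacle'' you flag is in fact the entire theorem, and the sketch gives no mechanism for overcoming it.

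The inductive step is closer to sound but still underspecified. Applying GR$(m+1,n)$ to the product coloring $z\mapsto(\chi(z(u'')))_{u''\in\varwords{m+1}{m}}$ does give $w\in\widetilde S_n$ with $\chi(w(u')(u''))$ independent of $u'$, but the value still varies with $u''$; to remove that dependence you would need a \emph{relativized} GR$(m,m+1)$ applied inside the parameter structure determined by a fixed $w(u'_0)\in\widetilde S_{m+1}$, which is not a consequence of the bare statement you are inducting on and is not what ``combine with the base case'' delivers. Your factorization $\varwords{n}{m}=\{u'(u''):u'\in\varwords{n}{m+1},\ u''\in\varwords{m+1}{m}\}$ is correct and the renaming trick restoring strongness is fine, but the outer induction cannot compensate for the missing base case, and the base case cannot be obtained from Hales--Jewett by the method sketched.
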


\begin{proof}\cite[Section 7]{GR}.\end{proof}

After identifying the elements of $\A$ with the length $1$ words over $\A$, one
sees that Theorem \ref{HJth} is exactly the $m=0$ and $n=1$ case of Theorem \ref{GRth}.
Notice also that  Theorem \ref{ndimHJ} is actually equivalent to Theorem \ref{GRth} in the special case of $m=0$.
In fact if $w\in S_n$, $\sigma$ is a 
permutation of $\nhat{n}$ and $u$ is the result of replacing
each $v_i$ in $w$  by $v_{\sigma(i)}$ for each $i\in\nhat{n}$, then
 $\{u(\vec x):\vec x\in \A^n\}=\{w(\vec x):\vec x\in \A^n\}$.
In this paper we shall be mostly concerned with cases of Theorem \ref{GRth} with $m=0$ and
arbitrary $n\in\ben$. (We are not concerned with $m>0$ because the natural
versions of our main theorems are not valid for $m>0$. We shall discuss this
point at the end of Section \ref{seccombine}.)  Accordingly, from this point on until Section \ref{infext} we will not be concerned with
the order of occurrence of the variables.

In contrast to Theorem~\ref{ndimHJ}, the Graham-Rothschild Parameter 
Sets Theorem does not appear to be deducible directly from the Hales-Jewett Theorem; at least we 
know of no such proof. In Section \ref{infext} we present a new purely algebraic proof 
of an infinitary extension of Theorem~\ref{GRth}.

Our main results in this paper deal with obtaining
$n$-variable words satisfying the Hales-Jewett Theorem and
simultaneously relating to Ramsey-Theoretic results
in some relevant semigroup. The paper is organized as follows:

In Section \ref{seccombine} we present our main theorems
relating $S_n$ with other structures.  
In Section \ref{sechomo} we determine precisely which
homomorphisms from $S_n$ to $(\ben,+)$ satisfy 
the hypotheses of our main theorem of Section \ref{seccombine},
namely Theorem \ref{matrix}.

As consequences of the results of Section \ref{seccombine}
we establish that for $k\in\ben$, the set 
of points $(p_1,p_2,\ldots,p_k)\in (\beta\ben)^k$ with
the property that whenever $B_i\in p_i$ for $i\in\nhat{k}$, the $k$-tuple
$(B_1,B_2,\ldots,B_k)$ satisfies the conclusions of
one of those theorems, is a compact subsemigroup of $(\beta\ben)^k$
containing the idempotents of $(\beta\ben)^k$ (or the minimal idempotents, depending on
the theorem). The details of these results 
will be presented in Section \ref{seccptsemi}.

In Section \ref{seccptideals} we restrict our attention to
versions of the Hales-Jewett Theorem. Letting
$R_n=\{p\in\beta S_0:(\forall B\in p)(\exists w\in S_n)(\{w(\vec x):\vec x\in \A^n\}\subseteq B)\}$.
We show that each $R_n$ is a compact ideal of $\beta S_0$, that $R_{n+1}\subsetneq R_n$ for
each $n\in\ben$, and that $\cl K(\beta S_0)\subsetneq\bigcap_{n=1}^\infty R_n$.

In Section \ref{infext} we present a new fully algebraic proof of an infinitary extension of the
Graham-Rothschild Parameter Sets Theorem. This new proof is a significant
simplification of the original.

The statements and proofs of the results in this paper use strongly the algebraic structure
of the Stone-\v Cech compactification of a discrete 
semigroup. We now present a brief description 
of this structure.  For more details or for  any unfamiliar facts encountered in
this paper, we refer the reader to  \cite[Part I]{HS}. All 
topological spaces considered herein are assumed to be Hausdorff.

Let $S$ be a semigroup. For each $s\in S$, $\rho_s:S\to S$ and $\lambda_s:S\to S$ are defined by
$\rho_s(x)=xs$ and $\lambda_s(x)=sx$.  If $S$ is also a topological space, $S$ is said to 
be {\it right topological\/} if the map 
$\rho_s$ is continuous for every $s\in S$. In this case, the set of elements $s\in S$ for 
which $\lambda_s$ is continuous, is called the {\it topological center\/} of $S$.

The assumption that $S$ is compact and right topological
has powerful algebraic implications. $S$ has a smallest two sided ideal $K(S)$ which is the
union of all of the minimal right ideals, as well as
the union of all of the minimal left ideals. The intersection
of any minimal left ideal and any minimal right ideal is
a group. In particular, $S$ has idempotents.   Any left ideal of $S$ contains a 
minimal left ideal of $S$, and any right ideal of $S$ contains a minimal right ideal of $S$.  So the intersection of any left ideal of $S$
and any right ideal of $S$ contains an idempotent. An idempotent in $S$ is said 
to be {\it minimal\/} if it is in $K(S)$. This is equivalent to being minimal in 
the ordering of idempotents defined by  $p\leq q$ if
$pq=qp=p$. If $q$ is any idempotent in $S$, there is a minimal idempotent $p\in S$ for which $p\leq q$.

Given a discrete semigroup $(T,\cdot)$, let $\beta T=\{p:
p$ is an ultrafilter on $T\}$. We identify the
principal ultrafilter $e(x)=\{A\subseteq T:x\in A\}$ 
with the point $x\in T$ and thereby pretend that
$T\subseteq \beta T$. A base for the topology
of $\beta T$ consists of the clopen sets $\overline A$ for
all $A\subseteq T$, where $\overline A=\{p\in\beta T:A\in p\}$. The operation $\cdot$ on $T$
extends to an operation on $\beta T$, also denoted
by $\cdot$ making $(\beta T,\cdot)$ a right topological
semigroup with $T$ contained in its topological center. So, given $p,q\in \beta T$,
$p\cdot q=\displaystyle\lim_{s\to p}\lim_{t\to q}s\cdot t$, where $s$ and $t$ denote elements of $T$.
  If $A\subseteq T$,
$A\in p\cdot q$ if and only if $\{x\in T:x^{-1}A\in q\}\in p$,
where $x^{-1}A=\{y\in T:x\cdot y\in A\}$. If $(T,+)$ is a commutative discrete semigroup, we will use
+ for the semigroup operation on $\beta T$, even though $\beta T$ is likely to be far from commutative. 
In this case, we have that $A\in p+ q$ if and only if $\{x\in T:-x+A\in q\}\in p$,
where $-x+A=\{y\in T:x+ y\in A\}$.

 A set $D\subseteq T$
is  {\it piecewise syndetic\/} if and only if $D\in p$ for some $p\in K(\beta T)$ and 
is {\it central\/} if and only $D\in p$ for some  idempotent $p\in K(\beta T)$.
We will also need the following equivalent characterization 
of piecewise syndetic sets: $D$ is piecewise syndetic if and only if there exists a finite subset $G$ of $T$ with the 
property that for every finite subset $F$ of $T$ there exists $x\in T$ such that $Fx\subseteq \bigcup_{t\in G}t^{-1}D$.
(See \cite[Theorem 4.40]{HS}.)
Given a 
sequence $\langle x_n\rangle_{n=1}^\infty$ and $m\in\ben$, we set
$FP(\langle x_n\rangle_{n=m}^\infty)=
\{\prod_{t\in F}x_t:F\in\pf(\ben)\hbox{ and }\min F\geq m\}$,
where $\pf(\ben)$ is the set of finite nonempty subsets of
$\ben$ and the products are computed in increasing order of indices. 
Then $\bigcap_{m=1}^\infty\overline{FP(\langle x_n\rangle_{n=m}^\infty)}$
is a compact semigroup so there is an idempotent 
$p$ with $FP(\langle x_n\rangle_{n=m}^\infty)\in p$ for every $m$. (See \cite[Lemma 5.11]{HS}.)
If the operation is denoted by $+$, we write
$FS(\langle x_n\rangle_{n=m}^\infty)=
\{\sum_{t\in F}x_t:F\in\pf(\ben)\hbox{ and }\min F\geq m\}$.
Given an idempotent $p$ and $B\in p$ let $B^\star(p)=\{x\in B:x^{-1}B\in p\}$.
Then  $B^\star(p)\in p$ and for each $x\in B^\star(p)$, one has that 
$x^{-1}B^\star(p)\in p$. (See \cite[Lemma 4.14]{HS}). If there is no risk of confusion, we will sometimes write $B^\star$
for $B^\star(p)$.

If $\gamma$ is a function from the discrete semigroup $T$ to a compact space $C$,
then $\gamma$ has a continuous extension from $\beta T$ to $C$, which we will
also denote by $\gamma$. If $\gamma:T\to W$, where $W$ is discrete, we will
view the continuous extension as taking $\beta T$ to $\beta W$, unless we state otherwise.
If $\gamma:T\to C$ is a homomorphism from $T$ into a compact right topological semigroup $C$, with $\gamma[T]$
contained in the topological center of $C$, then the continuous extension
$\gamma:\beta T\to C$ is a homomorphism by  \cite[Corollary 4.22]{HS}.

We end this section with a few simple  illustrations of how the algebraic structure  described above may be applied to derive simple algebraic proofs
of some of the results discussed earlier including for instance the Hales-Jewett Theorem. 
We begin with the following theorem whose proof is based on an argument due to 
Andreas Blass which first appeared in \cite{BBH}. 

\begin{theorem}\label{blass} Let $T$ be a semigroup and let $S$ be a subsemigroup of $T$. 
Let $F$ be a  nonempty set of homomorphisms mapping $T$ to $S$ which are equal to the identity on $S$. 
\begin{itemize}
\item[(1)] Let $p$ be a minimal idempotent in $\beta S$. Let $q$ be an idempotent in $\beta T$
for which $q\leq p$. Then $\nu(q)=p$ for every $\nu\in F$. 
\item[(2)] For any finite subset $F_0$ of $F$ and any central subset $D$ of $S$,  there is a 
central subset $Q$ of $T$ such that, for every $t\in Q$, $\{\nu(t):\nu\in F_0\}\subseteq D$.
\item[(3)] For any finite subset $F_0$ of $F$ and any finite coloring of $S$,  there is a 
central subset $Q$ of $T$ such that, for every $t\in Q$, $\{\nu(t):\nu\in F_0\}$
is monochromatic. \end{itemize}
\end{theorem}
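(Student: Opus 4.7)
The plan is to prove (1) directly from the fact that the extended $\nu$ must fix $\beta S$ pointwise, and then to derive (2) and (3) by combining (1) with the algebraic machinery of minimal idempotents recalled in this section.

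For (1), I would first extend $\nu:T\to S$ to a continuous homomorphism $\nu:\beta T\to\beta S$, invoking \cite[Corollary 4.22]{HS} as discussed above (since $\nu[T]\subseteq S$ lies in the topological center of $\beta S$). Because $\nu$ is the identity on $S$ and $S$ is dense in $\beta S\subseteq\beta T$, continuity forces $\nu$ to be the identity on all of $\beta S$; in particular $\nu(p)=p$. Applying the homomorphism $\nu$ to the relations $qp=pq=q$ then gives $\nu(q)p=p\nu(q)=\nu(q)$, and $\nu(q)\nu(q)=\nu(q^2)=\nu(q)$, so $\nu(q)$ is an idempotent of $\beta S$ with $\nu(q)\leq p$. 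The minimality of $p$ in $\beta S$ then forces $\nu(q)=p$.

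For (2), given $D$ central in $S$, I would choose a minimal idempotent $p\in K(\beta S)$ with $D\in p$. Viewing $p$ as an idempotent of the compact right topological semigroup $\beta T$, the structural fact recalled in the introduction---that every idempotent dominates a minimal idempotent---furnishes a minimal idempotent $q\in\beta T$ with $q\leq p$. By (1), $\nu(q)=p$ for every $\nu\in F_0$, so $D\in\nu(q)$, which is to say $\nu^{-1}(D)\in q$. Setting $Q=\bigcap_{\nu\in F_0}\nu^{-1}(D)$, this finite intersection lies in the ultrafilter $q$; since $q$ is a minimal idempotent of $\beta T$, $Q$ is central in $T$. For any $t\in Q$ and any $\nu\in F_0$, $\nu(t)\in D$, as required. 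Part (3) then follows immediately: given a finite coloring of $S$, pick any minimal idempotent $p\in\beta S$, let $D$ be the unique color class belonging to the ultrafilter $p$ (which is thereby central), and apply (2).

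The main subtlety is the interplay between the two levels of minimality: the hypothesis puts $p$ minimal inside $\beta S$, but the conclusion of (2) needs centrality inside $\beta T$. The key conceptual move is that the proof of (1) only uses $q\leq p$, not any minimality of $q$, so we are free to choose $q$ to be a minimal idempotent of $\beta T$ below $p$. This is what makes (1) usable as an input to (2) rather than merely an abstract statement about pairs of idempotents at different levels.
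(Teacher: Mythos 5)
Your proof is correct and follows essentially the same approach as the paper's: establish that the extended $\nu$ fixes $\beta S$ pointwise, deduce $\nu(q)\leq \nu(p)=p$ (you spell out the computation from $qp=pq=q$ where the paper simply invokes that homomorphisms preserve the order $\leq$), conclude by minimality of $p$ in $\beta S$, and then derive (2) and (3) by choosing a minimal idempotent $q\leq p$ in $\beta T$ and intersecting preimages. Your closing remark about the "two levels of minimality" accurately identifies the crux of the argument.
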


\begin{proof} (1) For each $\nu\in F$,
$\nu(q)\leq \nu(p)=p$ and so, since $\nu(q)\in \beta S$, $\nu(q)=p$. 

(2) Pick a minimal idempotent $p\in\beta S$ such that $D\in p$. 
By \cite[Theorem 1.60]{HS},
pick a minimal idempotent $q\in\beta T$ such that $q\leq p$. Then $\nu(q)=p$ for every $\nu\in F_0$. 
Hence, if $Q=\bigcap_{\nu\in F_0}\nu^{-1}[D]$,  then $Q\in q$. 

(3) Pick a minimal idempotent $p\in\beta S$ and let $D$ be a monochromatic member of $p$.
\end{proof}

We note that the above theorem provides an algebraic proof of Theorem~\ref{ndimHJ} and hence of the Hales-Jewett Theorem. 
In fact, put $S=S_0$, $T=S_0\cup S_n$
and $F=\{h_{\vec x}: \vec x \in \A^n\}$, where $h_{\vec x}(w)=\left\{\begin{array}{cl}w(\vec x)&\hbox{if }w\in S_n\\
w&\hbox{if }w\in S_0\,.\end{array}\right.$ Then by Theorem~\ref{blass} we deduce that for any finite coloring of $S_0$ there exists
a central subset $Q$ of $T$ such that for every $w\in Q$, $\{w(\vec x):\vec x \in \A^n\}$ is monochromatic. 
Pick $q\in K(\beta T)$ with $Q\in q$.
Then since $S_n$ is an ideal of $T$ it follows that $S_n\in q$. So for any $w\in S_n\cap Q$ 
we have $\{w(\vec x):\vec x\in \A^n\}$ is monochromatic.

We conclude this section with two additional simple corollaries of Theorem~\ref{blass} that
will not be needed in the rest of the paper.

\begin{corollary}\label{corps} Let $T$ be a semigroup and let $S$ be a subsemigroup of  $T$.
Let $F$ be a finite nonempty set of homomorphisms mapping $T$ to $S$ which are equal to the identity on $S$. Let $D$
be a piecewise syndetic subset of $S$. Then $\bigcap_{\nu \in F}\nu^{-1}[D]$ is a piecewise syndetic subset
of $T$. \end{corollary}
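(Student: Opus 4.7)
My plan is to reduce to Theorem~\ref{blass}(2) by first showing that any piecewise syndetic subset of $S$ becomes central after a suitable left translation. Let $E=\bigcap_{\nu\in F}\nu^{-1}[D]$; the goal is to produce an ultrafilter in $K(\beta T)$ containing $E$, which is equivalent to $E$ being piecewise syndetic in $T$.

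The first step is to extract from $D$ a left translate that is central. Since $D$ is piecewise syndetic in $S$, pick $p\in K(\beta S)$ with $D\in p$. As $p$ lies in both some minimal left ideal $L$ and some minimal right ideal $R$ of $\beta S$, and $L\cap R$ is a group whose identity $e$ is a minimal idempotent, we have $pe=p$. The characterization of ultrafilter multiplication then says $D\in p=pe$ if and only if $\{x\in S:x^{-1}D\in e\}\in p$, so this set is nonempty; fix any $x$ in it, so $x^{-1}D\in e$, meaning $x^{-1}D$ is central in $S$.

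Next, I feed the central set $x^{-1}D$ and the finite family $F$ into Theorem~\ref{blass}(2), obtaining a central subset $Q$ of $T$ such that $\{\nu(t):\nu\in F\}\subseteq x^{-1}D$ for every $t\in Q$. Because $x\in S$ and every $\nu\in F$ fixes $S$ pointwise, $\nu(xt)=x\nu(t)\in D$ for all $\nu\in F$ and $t\in Q$, i.e.\ $xQ\subseteq E$. To finish, pick a minimal idempotent $q\in\beta T$ with $Q\in q$: since $K(\beta T)$ is a two-sided ideal of $\beta T$, the product $xq$ lies in $K(\beta T)$, and $x^{-1}E\supseteq Q\in q$ forces $E\in xq$, whence $E$ is piecewise syndetic in $T$.

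I expect the main obstacle to be the initial shift-to-central step: it depends on representing the given $p\in K(\beta S)$ as $pe$ for a minimal idempotent $e$ (using that $K(\beta S)$ is a disjoint union of groups of the form $L\cap R$) and then pulling a concrete $x\in S$ out of the ultrafilter $p$. After that, the rest is a direct application of Theorem~\ref{blass}(2) and the two-sided ideal property of $K(\beta T)$.
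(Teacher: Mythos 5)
Your proof is correct and follows essentially the same route as the paper's: translate $D$ by an element $s\in S$ to get a central subset of $S$, apply Theorem~\ref{blass} to obtain a central $Q\subseteq T$ whose image under every $\nu\in F$ lands in $s^{-1}D$, and shift $Q$ forward by $s$ to conclude piecewise syndeticity via $sq\in K(\beta T)$. The only cosmetic differences are that you reprove the shift-to-central fact from first principles (the paper simply cites \cite[Theorem 4.43]{HS}) and invoke Theorem~\ref{blass}(2) rather than part~(1) with an explicit preimage argument.
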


\begin{proof} By \cite[Theorem 4.43]{HS}, we may pick $s\in S$ for which $s^{-1}D$ is a 
central subset of $S$. We can  choose a minimal idempotent $p$ in $\beta S$ for 
which $s^{-1}D\in p$, and we can then choose a
minimal idempotent $q$ in $\beta T$ for which $q\leq p$, by \cite[Theorem 1.60]{HS}. 
By Theorem \ref{blass}(1), $\nu(q)=p$ for every $\nu\in F$. Hence, if 
$Q=\bigcap_{\nu\in F}\nu^{-1}[s^{-1}D]$, then $Q\in q$.
Now $sQ$ is a piecewise syndetic subset of $T$, because $sQ\in sq$ and $sq\in K(\beta T)$. 
We claim that $sQ\subseteq \bigcap_{\nu \in F}\nu^{-1}[D]$. In fact, let 
$x\in sQ$, pick $t\in Q$ such that $x=st$, and let
$\nu\in F$. Then $\nu(x)=\nu(st)=s\nu(t)\in s(s^{-1}D)\subseteq D$.
\end{proof}

 In the following corollary, we use the abbreviated notation  $P^\star$ for $P^\star(p)$ for $P$ belonging to an idempotent $p$.

\begin{corollary}\label{corblass}  Let $T$ be a semigroup and let $S$ be a subsemigroup of  $T$.
Let $F$ be a finite nonempty set  of homomorphisms from $T$ onto $S$ which are equal to the identity on $S$.  
Let $p$ be a minimal idempotent in $\beta S$ and let $P\in p$.
Let $q$ be a minimal idempotent of $\beta T$ for which $q\leq p$ and let 
$Q=\bigcap_{\nu\in F}\nu^{-1}[P^\star]$. Then $Q\in q $. There is an 
infinite sequence $\langle w_n\rangle_{n=1}^{\infty}$ of elements of $Q$ such that
for each $H\in\pf(\ben)$ and each $\varphi:H\to F$,
$\prod_{t\in H}\varphi(t)(w_t)\in P^\star$, where the product is computed in
increasing order of indices.\end{corollary}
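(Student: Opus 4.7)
The assertion that $Q\in q$ is essentially part (1) of Theorem~\ref{blass}: since $q\le p$ we have $\nu(q)=p$ for every $\nu\in F$, and $P^\star\in p$ therefore forces $\nu^{-1}[P^\star]\in q$ for each $\nu$; the intersection over the finite set $F$ is again in $q$.

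For the sequence I would construct $w_1,w_2,\ldots\in Q$ recursively, maintaining as invariant that every partial product
\[
\prod_{t\in H}\varphi(t)(w_t)\qquad(\emp\ne H\subseteq\nhat{n},\ \varphi:H\to F)
\]
lies in $P^\star$ (with the product taken in increasing order of indices). Given $w_1,\ldots,w_n$ the finite collection
\[
\Phi_n=\Bigl\{\prod_{t\in H}\varphi(t)(w_t):\emp\ne H\subseteq\nhat{n},\ \varphi:H\to F\Bigr\}
\]
sits in $P^\star$ by induction, so for each $u\in\Phi_n$ the set $u^{-1}P^\star$ belongs to $p$, and hence the finite intersection $A_n:=\bigcap_{u\in\Phi_n}u^{-1}P^\star$ belongs to $p$ as well. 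Using $\nu(q)=p$ again, $\nu^{-1}[A_n]\in q$ for every $\nu\in F$, so
\[
W_n:=Q\cap\bigcap_{\nu\in F}\nu^{-1}[A_n]\in q,
\]
which in particular is nonempty. Choose any $w_{n+1}\in W_n$. To check the invariant extends to $n+1$, let $H\subseteq\nhat{n+1}$ and $\varphi:H\to F$. If $n+1\notin H$ the inductive hypothesis applies; if $H=\{n+1\}$ then $\varphi(n+1)(w_{n+1})\in P^\star$ because $w_{n+1}\in Q$; otherwise write $H=H_0\cup\{n+1\}$ with $H_0\ne\emp$, let $u$ be the partial product over $H_0$, note $u\in\Phi_n$, and observe $\varphi(n+1)(w_{n+1})\in A_n\subseteq u^{-1}P^\star$, so $u\cdot\varphi(n+1)(w_{n+1})\in P^\star$.

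There is no deep obstacle; the only subtlety worth flagging is why one cannot simply produce the sequence by invoking a standard $FS$-style idempotent argument inside $\beta T$ and pushing forward through a single homomorphism. Because the function $\varphi$ may send different indices in $H$ to different elements of $F$, the product $\prod_{t\in H}\varphi(t)(w_t)$ is \emph{not} in general of the form $\nu\bigl(\prod_{t\in H}w_t\bigr)$, so one must do the bookkeeping inside $\beta S$: intersect $u^{-1}P^\star$ over the finitely many $u\in\Phi_n$ to land in a single member of $p$, and only then pull this set back through each of the finitely many $\nu\in F$ to obtain something in $q$. The finiteness of $F$ at each stage is what keeps the intersection large enough to continue the recursion.
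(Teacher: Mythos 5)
Your proof is correct and follows essentially the same inductive construction as the paper's: at each stage you intersect $P^\star$ with the shifts $u^{-1}P^\star$ over the finitely many previously accumulated products (your $A_n$ together with the $Q$-intersection is exactly the paper's $R=P^\star\cap\bigcap_{y\in E}y^{-1}P^\star$ pulled back through each $\nu\in F$), then pick the next term from the resulting member of $q$. The paper also first verifies $Q\in q$ via Theorem~\ref{blass}(1) in the same way you do.
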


\begin{proof} Choose $w_1\in Q$. Let $m\in\ben$ and assume we have
chosen $\langle w_t\rangle_{t=1}^m$ in $Q$ such that whenever
$\emp\neq H\subseteq\nhat{m}$ and $\varphi:H\to F$,
$\prod_{t\in H}\varphi(t)(w_t)\in P^\star$. Note that this hypothesis is
satisfied for $m=1$. Let $$\textstyle E=\{\prod_{t\in H}\varphi(t)(w_t):
\emp\neq H\subseteq\nhat{m}\hbox{ and }\varphi:H\to F\}\,.$$ Then
$E\subseteq P^\star$. Let $R=P^\star\cap\bigcap_{y\in E}y^{-1}P^\star$.
Then $R\in p$ so $\bigcap_{\nu\in F}\nu^{-1}[R]\in q$.
Pick $w_{m+1}\in \bigcap_{\nu\in F}\nu^{-1}[R]$ and note that $w_{m+1}\in Q$.

To verify the hypothesis let $\emp\neq H\subseteq\nhat{m+1}$ and let
$\varphi:H\to F$. If $m+1\notin H$, the conclusion holds by assumption,
so assume that $m+1\in H$. If $H=\{m+1\}$, then $w_{m+1}\in\varphi(m+1)^{-1}[P^\star]$,
so assume that $\{m+1\}\subsetneq H$ and let $G=H\setminus\{m+1\}$. Let
$y=\prod_{t\in G}\varphi(t)(w_t)$. Then
$w_{m+1}\in\varphi(m+1)^{-1}[y^{-1}P^\star]$ so
$\prod_{t\in H}\varphi(t)(w_t)=y\varphi(m+1)(w_{m+1})\in P^\star$.\end{proof}

\section{Combining structures}\label{seccombine}

Throughout this section, and up until Section \ref{infext}, 
 $\A$ is a fixed non-empty finite alphabet. 
Most of the results in this paper involve families of well behaved homomorphisms between certain semigroups:

\begin{definition}\label{Spreserving} Let $n\in \ben$ and let 
$\nu:S_n\to S_0$ be a homomorphism.
We shall say that $\nu$ is {\it $S_0$-preserving\/} if 
$\nu(uw)=u\nu(w)$ and $\nu(wu)=\nu(w)u$ for every $u\in S_0$ and every $w\in S_n$. \end{definition}

Note that if $\vec x\in\bea^n$, then the function $h_{\vec x}:S_n\to S_0$ defined
by $h_{\vec x}(w)=w(\vec x)$ is an $S_0$-preserving homomorphism. Also, the
function $\delta:S_n\to S_0$ which simply deletes all occurrences of variables
is an $S_0$-preserving homomorphism. As another example, assume that $n\geq 2$
and define $\mu:S_n\to S_n$ where $\mu(w)$ is obtained from $w$ by replacing each occurrence of $v_2$ by 
$v_1v_2$. Given $\vec x\in \bea^n$, $h_{\vec x}\circ\mu$ is an $S_0$-preserving 
homomorphism which cannot be obtained by composing those of the kind mentioned previously; in fact $|h_{\vec x}\circ\mu (w)|>|w|$ for each $w\in S_n$.

\begin{definition}\label{defsind} Let $S$, $T$, and $R$ be semigroups
such that $S\cup T$ is a semigroup and $T$ is an ideal of
$S\cup T$. Then
a homomorphism $\tau:T\rightarrow R$ is
 said to be {\it $S$-independent\/}
if, for every $w\in T$ and every $u\in S$, 
$\tau(uw)=\tau(w)=\tau(wu)$.
\end{definition}

In most cases, the above definition will be applied to the case $S=S_0$ and $T=S_n$ for some $n\in \ben$. 
We shall see later in Lemma \ref{Ab} that if $n\in\ben$, $R$ is a cancellative commutative 
semigroup, and $\tau:S_n \to R$ is an $S_0$-independent  homomorphism, then 
$\tau(w)=\tau(w')$ whenever $|w|_{v_i}=|w'|_{v_i}$ for each $i\in\nhat{n}$. 
For reasons which will be made clear in Section \ref{sechomo},
we will primarily be concerned with $S_0$-independent
homomorphisms from $S_n$ to $(\ben,+)$ of the form
$\tau(w)=|w|_{v_i}$ for some $i\in\nhat{n}$.

\begin{lemma}\label{phi} Let $S$ and $T$ be semigroups
such that $S\cup T$ is a semigroup and $T$ is an ideal of
$S\cup T$. Let $\phi: T\to C$ be an $S$-independent homomorphism from $T$ into the 
topological center of a compact right topological semigroup $C$. Then $\phi$ extends to a
continuous homomorphism from $\beta T$ into $C$, which we shall also denote by $\phi$.   For every 
$q\in \beta T$ and every $p\in \beta S$, 
$ \phi(q)= \phi(pq)= \phi(qp)$. \end{lemma}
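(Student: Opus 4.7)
The plan is to establish the continuous extension first and then verify the absorption identity. Existence of a continuous extension $\phi:\beta T\to C$ follows from compactness of $C$, and because $\phi[T]$ is contained in the topological center of $C$, \cite[Corollary 4.22]{HS} upgrades this extension to a homomorphism.

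For the identities $\phi(pq)=\phi(q)=\phi(qp)$, the first point is that the products $pq$ and $qp$, computed in $\beta(S\cup T)$, actually lie in $\beta T$, so that $\phi$ applies to them. This is because $T$ is an ideal of $S\cup T$, so its closure $\beta T$ is an ideal of $\beta(S\cup T)$. Since $S\cup T$ is discrete, $T$ is clopen in $S\cup T$, so the topology of $\beta T$ agrees with the subspace topology it inherits from $\beta(S\cup T)$, and the continuous extension of $\phi$ may be applied without ambiguity.

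The core of the proof combines $S$-independence with the right topological structure. For each fixed $s\in S$, the map $\lambda_s$ is continuous on $\beta(S\cup T)$ since $S\cup T$ lies in the topological center, so $sq=\lim_{t\to q}st$ with $t\in T$; by the ideal hypothesis $st\in T$, and by $S$-independence $\phi(st)=\phi(t)$. Continuity of $\phi$ on $\beta T$ then gives
\[
\phi(sq)=\lim_{t\to q}\phi(st)=\lim_{t\to q}\phi(t)=\phi(q).
\]
Since this value is independent of $s$, right continuity of $\rho_q$ and continuity of $\phi$ yield $\phi(pq)=\lim_{s\to p}\phi(sq)=\phi(q)$. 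The symmetric calculation, using $qp=\lim_{t\to q}tp$, continuity of $\lambda_t$, and $\phi(ts)=\phi(t)$ by $S$-independence, produces $\phi(qp)=\phi(q)$.

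I do not foresee a substantial obstacle; the argument is essentially a routine interplay between compactness, continuity, and the definition of the extended operation. The one point demanding care is the bookkeeping of which compactification the products are being formed in, handled once and for all by the observation that $\beta T$ is a two-sided ideal of $\beta(S\cup T)$.
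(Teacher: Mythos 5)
Your proof is correct and takes essentially the same route as the paper's: cite [HS, Corollary 4.22] for the continuous homomorphic extension, note that $pq,qp\in\beta T$ because $\beta T$ is an ideal of $\beta(S\cup T)$ (the paper cites [HS, Corollary 4.18] here), and then push $\phi$ through the iterated limits $\lim_{s\to p}\lim_{t\to q}st$ and $\lim_{t\to q}\lim_{s\to p}ts$, using $S$-independence to evaluate $\phi(st)=\phi(t)=\phi(ts)$. You unpack the two limits one at a time where the paper compresses them into a single display, but the underlying argument is identical.
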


\begin{proof} The fact that $\phi$ extends to a continuous homomorphism is \cite[Corollary 4.22]{HS}.
Let $p\in\beta S$ and $q\in\beta T$ be given.
 In the following expressions let $s$ and $t$ denote members
of $S$ and $T$ respectively. Since $\phi$ is continuous on $\beta T$ and since
both $pq$ and $qp$ are in $\beta T$ by \cite[Corollary 4.18]{HS}, we 
have that \[\phi(pq)=\displaystyle\phi(\lim_{s\to p}\lim_{t\to q}st)=\lim_{s\to p}\lim_{t\to q}\phi(st)=\lim_{t\to q}\phi(t)= {\phi}(q)\] and similarly
\[\phi(qp)=\displaystyle\phi(\lim_{t\to q}\lim_{s\to p}ts)=\lim_{t\to q}\lim_{s\to p}\phi(ts)=\lim_{t\to q}\phi(t)= {\phi}(q).\] \end{proof}

\begin{theorem}\label{noncom} Let $S$ and $T$ be semigroups
such that $S\cup T$ is a semigroup and $T$ is an ideal of
$S\cup T$.  Let $\phi:T\to C$ be an $S$-independent homomorphism  
from $T$ into a compact right topological 
semigroup $C$ with $\phi[T]$ contained in the topological center of $C$ and denote
also by $\phi$ its continuous extension to $\beta T$.   
Let $F$ be a finite nonempty set of homomorphisms from $S\cup T$ into $S$ which are each equal to the identity on $S$,
and let $D$ be a piecewise syndetic subset of $S$. Let $p$ be an idempotent in 
$ \phi[\beta T]$, and let $U$ be a neighborhood of $p$ in $C$. There exists 
$w\in T$ such that $\phi(w)\in U$ and $\nu(w)\in D$ for every $\nu\in F$. \end{theorem}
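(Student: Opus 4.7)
The plan is to produce an idempotent $q'\in\beta T$ satisfying $\phi(q')=p$ and $eq'=q'e=q'$ for a suitable minimal idempotent $e$ of $\beta S$, and then to extract $w$ from the intersection of finitely many members of the ultrafilter $q'$. I would begin by exploiting the piecewise syndeticity of $D$: by \cite[Theorem 4.43]{HS}, pick $s\in S$ for which $s^{-1}D$ is central in $S$, and then choose a minimal idempotent $e\in K(\beta S)$ with $s^{-1}D\in e$.

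Next I would set $M=\phi^{-1}[\{p\}]\cap\beta T$ and check that it is a nonempty compact subsemigroup of $\beta T$, using that $\phi$ extends to a continuous homomorphism $\beta T\to C$ and that $\{p\}$ is closed in the Hausdorff space $C$. Lemma \ref{phi}, together with the fact that $\beta T$ is an ideal of $\beta(S\cup T)$, implies $eM\cup Me\subseteq M$. In particular $Me$ is a compact subsemigroup of $M$ (the identity $(m_1e)(m_2e)=m_1(em_2)e$ together with $em_2\in M$ gives closure), so by the Ellis--Numakura lemma $Me$ contains an idempotent $q$; writing $q=me$ shows $qe=q$ automatically. Setting $q':=eq$, I would verify that $(eq)^2=e(qe)q=eq^2=eq=q'$, that $eq'=q'e=q'$, and that $q'\in M$ because $\phi(eq)=\phi(q)=p$ by Lemma \ref{phi}. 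Thus $q'$ is an idempotent of $\beta T$ with $\phi(q')=p$ and $q'\leq e$ in the ordering of idempotents of $\beta(S\cup T)$.

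For each $\nu\in F$, the continuous extension $\nu:\beta(S\cup T)\to\beta S$ is a homomorphism by \cite[Corollary 4.22]{HS} and restricts to the identity on $\beta S$, so $\nu(q')$ is an idempotent of $\beta S$ satisfying $\nu(q')\leq\nu(e)=e$. Minimality of $e$ in $\beta S$ forces $\nu(q')=e$, and hence $\nu^{-1}[s^{-1}D]\cap T\in q'$. Since $\phi$ is continuous and $\phi(q')=p\in U$, also $\phi^{-1}[U]\cap T\in q'$. I would then pick $w_0$ in the intersection of these finitely many members of $q'$ and set $w:=sw_0$. Then $w\in T$ because $T$ is an ideal, $\nu(w)=s\nu(w_0)\in s\cdot s^{-1}D\subseteq D$ for each $\nu\in F$ (using that $\nu$ is the identity on $S$), and $\phi(w)=\phi(w_0)\in U$ by the $S$-independence of $\phi$.

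The main obstacle is the construction of $q'$. The Blass-style strategy used in Corollary \ref{corps} would pick a minimal idempotent of $\beta(S\cup T)$ below $e$, but such an idempotent need not lie in $\phi^{-1}[\{p\}]$, so one cannot simultaneously enforce $\phi(q')=p$ and $q'\leq e$ by that route. The key is the two-step Ellis construction: first produce an idempotent $q\in Me$ (which gives $qe=q$), then left-multiply by $e$ to obtain $q'=eq$ satisfying also $eq'=q'$, while Lemma \ref{phi} preserves $\phi(q')=p$ throughout.
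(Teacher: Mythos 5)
Your proof is correct and follows essentially the same route as the paper: set $V=\phi^{-1}[\{p\}]$, use Lemma~\ref{phi} to produce an idempotent in $V$ sitting below the minimal idempotent of $\beta S$ (the paper takes one directly in $Vr\cap rV$, invoking the standard fact that the intersection of a left ideal and a right ideal of a compact right topological semigroup contains an idempotent; you build $q'=eq$ from an idempotent $q\in Me$, which is a more hands-on version of the same step), and then conclude via the argument of Theorem~\ref{blass}(1). The final extraction of $w=sw_0$ from the ultrafilter is identical to the paper's.
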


\begin{proof}  Since $D$ is piecewise syndetic in $S$, pick by 
\cite[Theorem 4.43]{HS} some $s\in S$ such that $s^{-1}D$ is central in $S$ and pick
a minimal idempotent $r\in\beta S$ such that $s^{-1}D\in r$.

Let $V=\phi^{-1}[\{p\}]$. Since $\phi$ is a continuous homomorphism from $\beta T$ to $C$,
$V$ is a compact subsemigroup of $\beta T$. By Lemma \ref{phi}, $Vr$ is a left ideal
of $V$ and $rV$ is a right ideal of $V$. Pick an idempotent $q\in Vr\cap rV$ and note
that $q\leq r$ in $\beta T$. By Theorem \ref{blass}(1), $\nu(q)=r$ for every $\nu\in F$.

Since $s^{-1}D\in r$ we have that for each $\nu\in F$, $\nu^{-1}[s^{-1}D]\in q$. 
Since $U$ is a neighborhood of $p$, pick
$R\in q$ such that $\phi[\,\overline {R}\,]\subseteq U$.
Pick $w\in R\cap\bigcap_{\nu\in F}\nu^{-1}[s^{-1}D]$.
Then $\phi(sw)=\phi(w)\in U$ and for $\nu \in F$, 
$\nu(w)\in s^{-1}D$ so $\nu(sw)=s\nu(w)\in D$.\end{proof}

We obtain the first result that was stated in the abstract as a corollary to 
Theorem \ref{noncom}.

\begin{corollary}\label{corabst} Define $\tau:S_1\to\ben$ by
$\tau(w)=|w|_{v_1}$, let $S_0$ be finitely colored, and let
$\langle x_n\rangle_{n=1}^\infty$ be a sequence in $\ben$.
There exists $w\in S_1$ such that $\{w(a):a\in \A\}$ is monochromatic
and $\tau(w)\in FS(\langle x_n\rangle_{n=1}^\infty)$.\end{corollary}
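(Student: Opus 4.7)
The plan is to derive this as a direct instance of Theorem \ref{noncom}. Take $S=S_0$, $T=S_1$, and $C=\beta\ben$. Then $S_0\cup S_1$ is a semigroup under concatenation and $S_1$ is an ideal of it. The map $\tau$ is clearly a homomorphism from $(S_1,\cdot)$ to $(\ben,+)$ since $|w_1w_2|_{v_1}=|w_1|_{v_1}+|w_2|_{v_1}$, and it is $S_0$-independent because prefixing or appending a word of $S_0$ contributes no new occurrences of $v_1$. Since $\ben$ lies in the topological center of $\beta\ben$, $\tau$ extends to a continuous homomorphism $\tau:\beta S_1\to\beta\ben$ and all the hypotheses on $\phi$ in Theorem \ref{noncom} are met.

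For the family $F$, for each $a\in\bea$ define $h_a:S_0\cup S_1\to S_0$ by $h_a(w)=w(a)$ when $w\in S_1$ and $h_a(w)=w$ when $w\in S_0$. Each $h_a$ is a homomorphism that is the identity on $S_0$, so $F=\{h_a:a\in\bea\}$ is a finite set of the required type. For the piecewise syndetic set $D$, note that the finite coloring of $S_0$ partitions $S_0$ into finitely many cells, and since piecewise syndeticity is partition regular, one cell $D$ is piecewise syndetic in $S_0$.

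Next I need an idempotent $p\in\tau[\beta S_1]$ that sees the set $FS(\langle x_n\rangle_{n=1}^\infty)$. By the standard construction recalled in the introduction (based on \cite[Lemma 5.11]{HS}), the set $\bigcap_{m=1}^\infty\overline{FS(\langle x_n\rangle_{n=m}^\infty)}$ is a compact subsemigroup of $\beta\ben$ and therefore contains an idempotent $p$; in particular $FS(\langle x_n\rangle_{n=1}^\infty)\in p$. Since $\tau:S_1\to\ben$ is surjective (e.g.\ $\tau(v_1^k)=k$), $\tau[\beta S_1]$ is a compact, hence closed, subset of $\beta\ben$ containing the dense set $\ben$, so $\tau[\beta S_1]=\beta\ben$ and $p\in\tau[\beta S_1]$. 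Take $U=\overline{FS(\langle x_n\rangle_{n=1}^\infty)}$, which is a clopen neighborhood of $p$.

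Applying Theorem \ref{noncom} with these choices yields $w\in S_1$ with $\tau(w)\in U\cap\ben=FS(\langle x_n\rangle_{n=1}^\infty)$ and $h_a(w)=w(a)\in D$ for every $a\in\bea$, which is exactly the conclusion. The only step that is not entirely mechanical is verifying that $p\in\tau[\beta S_1]$, but that reduces at once to surjectivity of $\tau$ on $S_1$, so there is no real obstacle; everything else is just bookkeeping to match the hypotheses of Theorem \ref{noncom}.
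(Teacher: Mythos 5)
Your proposal is correct and follows essentially the same route as the paper's proof: same choice of $S=S_0$, $T=S_1$, $C=\beta\ben$, same family $F=\{h_a:a\in\bea\}$, same idempotent $p$ obtained from \cite[Lemma 5.11]{HS}, and the same appeal to Theorem \ref{noncom} with $U=\overline{FS(\langle x_n\rangle_{n=1}^\infty)}$. The only cosmetic differences are that you argue $\tau[\beta S_1]=\beta\ben$ by a density-and-compactness argument (the paper cites \cite[Exercise 3.4.1]{HS}) and you obtain the piecewise syndetic color class by invoking partition regularity directly (the paper picks $q\in K(\beta S_0)$ and $D\in q$ monochromatic), but these are interchangeable.
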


\begin{proof} Let $S=S_0$, let $T=S_1$, and let $C=\beta\ben$. Then
$\tau[S_1]$ is contained in the topological center of $C$. Denote also
by $\tau$ the continuous extension taking $\beta S_1$ to $\beta\ben$.
Given $a\in \A$, define $f_a:S_0\cup S_1\to S_0$ by
$$f_a(w)=\left\{\begin{array}{cl} w(a)&\hbox{if }w\in S_1\\
w&\hbox{if }s\in S_0\,,\end{array}\right.$$
and let $F=\{f_a:a\in \A\}$. Then $F$ is 
a finite nonempty set of homomorphisms from $S_0\cup S_1$ into $S_0$ 
which are each equal to the identity on $S_0$. Pick by \cite[Lemma 5.11]{HS}
an idempotent $p\in\beta\ben$ such that $FS(\langle x_n\rangle_{n=1}^\infty)\in p$.
Pick any $q\in K(\beta S_0)$ and pick $D\in q$ which is monochromatic. Note
that $\tau[S_1]=\ben$ so by \cite[Exercise 3.4.1]{HS}, $\tau[\beta S_1]=\beta\ben$.
Therefore, $p\in \tau[\beta S_1]$. Consequently, Theorem \ref{noncom}
applies with $U=\overline{FS(\langle x_n\rangle_{n=1}^\infty)}$.
\end{proof}

\begin{corollary}\label{cornoncom} Let $n\in\ben$. Let 
$\phi:S_n\to C$ be an $S_0$-independent homomorphism from $S_n$ into a compact right topological 
semigroup $C$ with $\phi[S_n]$ contained in the topological center of $C$ and denote
also by $\phi$ the continuous extension to $\beta S_n$.  
Let $F$ be a finite nonempty set of $S_0$-preserving homomorphisms from $S_n$ into $S_0$,
let $D$ be a piecewise syndetic subset of $S_0$, let $p$ be an idempotent in 
$ \phi[\beta S_n]$, and let $U$ be a neighborhood of $p$ in $C$. There exists 
$w\in S_n$ such that $\phi(w)\in U$ and $\nu(w)\in D$ for every $\nu\in F$. \end{corollary}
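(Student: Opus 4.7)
The plan is to deduce this corollary as a direct specialization of Theorem \ref{noncom} with $S=S_0$ and $T=S_n$. To do so, I first need to check that the hypotheses of Theorem \ref{noncom} are satisfied in this setting; essentially all the work is in verifying that the homomorphisms in $F$ extend to homomorphisms on $S_0\cup S_n$ which are the identity on $S_0$, while the rest is automatic.

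First I would note that $S_0\cup S_n$ is a subsemigroup of the free semigroup over $\A\cup\{v_1,\ldots,v_n\}$ under concatenation, and that $S_n$ is an ideal of $S_0\cup S_n$: concatenating any word with a word that already contains every variable $v_i$ still produces a word containing every $v_i$. Hence the basic algebraic set-up of Theorem \ref{noncom} is in place. The hypothesis that $\phi$ is $S_0$-independent, that $\phi[S_n]$ lies in the topological center of $C$, and that $D$ is piecewise syndetic in $S_0$, together with the choice of idempotent $p\in\phi[\beta S_n]$ and neighborhood $U$ of $p$, are carried over verbatim.

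The one point requiring a short verification is the extension of each $\nu\in F$ to a homomorphism from $S_0\cup S_n$ into $S_0$ that is the identity on $S_0$. Given $\nu\in F$, I would define $\widetilde\nu:S_0\cup S_n\to S_0$ by $\widetilde\nu(w)=w$ when $w\in S_0$ and $\widetilde\nu(w)=\nu(w)$ when $w\in S_n$. To check that $\widetilde\nu$ is a homomorphism, one considers the four possibilities for a product $uw$ with $u,w\in S_0\cup S_n$. The two cases $u,w\in S_0$ and $u,w\in S_n$ are immediate from the fact that $\nu$ is a homomorphism on $S_n$. The mixed cases, $u\in S_0$, $w\in S_n$ and $u\in S_n$, $w\in S_0$, are exactly what the $S_0$-preserving property of $\nu$ guarantees: $\widetilde\nu(uw)=\nu(uw)=u\nu(w)=\widetilde\nu(u)\widetilde\nu(w)$, and symmetrically on the right.

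With these verifications in hand, Theorem \ref{noncom} applied to $S=S_0$, $T=S_n$, the set $\{\widetilde\nu:\nu\in F\}$ of homomorphisms, and the given $\phi$, $D$, $p$, and $U$ produces some $w\in S_n$ with $\phi(w)\in U$ and $\widetilde\nu(w)=\nu(w)\in D$ for every $\nu\in F$, which is exactly the conclusion sought. I do not anticipate any serious obstacle; the only tiny subtlety is remembering that the $S_0$-preserving condition is precisely what makes the extension $\widetilde\nu$ respect products between $S_0$ and $S_n$, so that the stronger formulation of Theorem \ref{noncom} can be invoked.
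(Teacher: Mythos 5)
Your proposal is correct and follows exactly the paper's route: set $S=S_0$, $T=S_n$, extend each $\nu\in F$ to $S_0\cup S_n$ by letting it be the identity on $S_0$, and invoke Theorem \ref{noncom}. The only difference is that you spell out the (easy) verification that the extended $\widetilde\nu$ is a homomorphism via the $S_0$-preserving property, a step the paper leaves implicit.
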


\begin{proof} Let $S=S_0$, let $T=S_n$, and for $\nu\in F$, extend 
$\nu$ to $S_0\cup S_n$ by defining $\nu$ to be the identity on $S_0$.
Then Theorem \ref{noncom} applies.\end{proof}

\begin{lemma}\label{p in image} Let $(T,\cdot)$ be a discrete
semigroup and let $m,n\in\ben$. Let $\phi: S_n\to \bigtimes_{i=1}^m T$ be an $S_0$-independent homomorphism. 
Then $\phi$ extends to a continuous $S_0$-independent homomorphism $\phi: \beta S_n\to \bigtimes_{i=1}^m\beta T$. 
Moreover if  $\vec p=(p_1,p_2,\ldots, p_m)$ is an idempotent in $\bigtimes_{i=1}^m\beta T$ with the property that
whenever $B_i\in p_i$ for each $i\in\nhat{m}$, there exists $w\in S_n$  such that $\phi(w)\in \bigtimes_{i=1}^m B_i$, then
$\vec p \in \phi[\beta S_n]$.
\end{lemma}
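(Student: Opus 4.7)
The plan is to handle the two conclusions separately, with part (1) being a direct application of machinery already in the excerpt and part (2) a soft topological argument using compactness of $\beta S_n$.

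For the existence and continuity of the extension, I first observe that $\bigtimes_{i=1}^m\beta T$, equipped with coordinatewise multiplication and the product topology, is a compact right topological semigroup: $\rho_{\vec s}$ on the product acts as $(\rho_{s_1},\ldots,\rho_{s_m})$ on coordinates, and each $\rho_{s_i}$ is continuous on $\beta T$. Similarly, $\bigtimes_{i=1}^m T$ lies in its topological center, since each $\lambda_{t_i}$ is continuous on $\beta T$. Thus $\phi:S_n\to\bigtimes_{i=1}^m\beta T$ is a homomorphism from a discrete semigroup into a compact right topological semigroup whose image sits in the topological center, so \cite[Corollary 4.22]{HS} gives a continuous homomorphic extension $\phi:\beta S_n\to\bigtimes_{i=1}^m\beta T$. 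To transfer $S_0$-independence, I will invoke Lemma \ref{phi} with $S=S_0$, $T=S_n$, and $C=\bigtimes_{i=1}^m\beta T$: it yields $\phi(pq)=\phi(qp)=\phi(q)$ for every $p\in\beta S_0$ and $q\in\beta S_n$, which is precisely the extended $S_0$-independence claimed.

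For the second assertion, the key point is that $\phi[\beta S_n]$ is compact, and hence closed, in $\bigtimes_{i=1}^m\beta T$. A basic neighborhood of $\vec p=(p_1,\ldots,p_m)$ in the product topology on $\bigtimes_{i=1}^m\beta T$ has the form $\bigtimes_{i=1}^m\overline{B_i}$ with $B_i\in p_i$ for each $i\in\nhat{m}$, because the clopen sets $\overline{B_i}$ with $B_i\in p_i$ form a neighborhood base at $p_i$ in $\beta T$. The hypothesis provides some $w\in S_n$ with $\phi(w)\in\bigtimes_{i=1}^m B_i\subseteq\bigtimes_{i=1}^m\overline{B_i}$. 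Hence every basic neighborhood of $\vec p$ meets $\phi[S_n]\subseteq\phi[\beta S_n]$, so $\vec p\in\overline{\phi[S_n]}\subseteq\phi[\beta S_n]$, as required.

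Neither part presents a real obstacle: the main content is recognizing that the product of Stone--\v Cech compactifications fits the hypotheses of Corollary 4.22 and Lemma \ref{phi}, and that the hypothesis on $\vec p$ is exactly the statement that $\vec p$ is a limit point of $\phi[S_n]$ in the product topology. The only minor subtlety to double-check is that the neighborhood base of $\vec p$ indexed by $(B_1,\ldots,B_m)\in p_1\times\cdots\times p_m$ really is a base at $\vec p$, which reduces to the finiteness of $m$.
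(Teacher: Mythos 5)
Your proof is correct and follows essentially the same route as the paper: the extension and its $S_0$-independence come from \cite[Corollary 4.22]{HS} together with the limit computation (which the paper redoes inline rather than citing Lemma \ref{phi}, but the content is identical), and your closure argument for $\vec p\in\phi[\beta S_n]$ is the same fact the paper establishes by assembling an ultrafilter $q\supseteq\{G_{(B_1,\ldots,B_m)}\}$ from the finite intersection property, just phrased as compactness of the image rather than an explicit witness $q$.
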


\begin{proof}Let $C=\bigtimes_{i=1}^m\beta T$.  Regarding $\phi$ as 
an $S_0$-independent homomorphism from $S_n$ into the right topological 
semigroup $C$, we see that $\phi[S_n]$ is contained in  $ \bigtimes_{i=1}^m T$ which 
in turn is contained in the topological center of $C$ by \cite[Theorem 2.22]{HS}.  
Hence by \cite[Corollary 4.22]{HS}, $\phi$ extends to a continuous  
homomorphism from $\beta S_n$ into $C$. To see that the extension is
$S_0$-independent, let $u\in S_0$ and let $p\in \beta S_n$. Then, letting
$s$ denote a member of $S_n$, we have
$$\phi(up)=\phi(\lim_{s\to p}us)=\lim_{s\to p}\phi(us)=\lim_{s\to p}\phi(s)=\phi(\lim_{s\to p}s)=\phi(p)$$
and similarly, $\phi(pu)=\phi(p)$.

Now assume that $\vec p=(p_1,p_2,\ldots, p_m)$ is an idempotent in $\bigtimes_{i=1}^m\beta T$ and
whenever $B_i\in p_i$ for each $i\in\nhat{m}$, there exists $w\in S_n$  such that $\phi(w)\in \bigtimes_{i=1}^m B_i$,
To see that $\vec p\in \phi[\beta S_n]$ let $(B_1,B_2,\ldots,B_m)\in\bigtimes_{i=1}^m p_i$, and let 
$$G_{(B_1,\ldots,B_m)}=\{w\in S_n:\phi(w)\in\bigtimes_{i=1}^m B_i\}\,.$$
Then by assumption, ${\mathcal G}=\{G_{(B_1,\ldots,B_m)}:(B_1,B_2,\ldots,B_m)\in\bigtimes_{i=1}^m p_i\}$
has the finite intersection property so one may pick $q\in\beta S_n$ such that
${\mathcal G}\subseteq q$. Then $\vec p=\phi(q)\in \phi[\beta S_n]$. 
\end{proof}

\begin{theorem}\label{thnoncom} Let $(T,\cdot)$ be a 
discrete semigroup and let $m,n\in\ben$.
Let $\vec p=(p_1,p_2,\ldots,
p_m)$ be an idempotent
in $\bigtimes_{i=1}^m\beta T$. For $i\in\nhat{m}$
let $\tau_i$ be an $S_0$-independent homomorphism from $S_n$ to $T$.  
Assume that whenever $B_i\in p_i$ for each
$i\in\nhat{m}$, there exists $w\in S_n$ such that
$\big(\tau_1(w),\tau_2(w),\ldots,\tau_m(w)\big)\in\bigtimes_{i=1}^m B_i$.
Let $D$ be a piecewise syndetic subset of $S_0$ and let
$F$ be a finite nonempty set of $S_0$-preserving homomorphisms
from $S_n$ to $S_0$. Then whenever
$B_i\in p_i$ for each $i\in\nhat{m}$, there exists
$w\in S_n$ such that $\nu(w)\in D$ for each $\nu\in F$
and for each $i\in\nhat{m}$, $\tau_i(w)\in B_i$.
\end{theorem}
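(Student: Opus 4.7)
The plan is to package the $m$ homomorphisms $\tau_i$ into a single homomorphism into a product semigroup and then apply the tools already developed. Define $\phi:S_n\to\bigtimes_{i=1}^m T$ by $\phi(w)=\bigl(\tau_1(w),\tau_2(w),\ldots,\tau_m(w)\bigr)$. Since each $\tau_i$ is an $S_0$-independent homomorphism and the product semigroup operation is coordinatewise, $\phi$ is itself an $S_0$-independent homomorphism. By \cite[Theorem 2.22]{HS}, $C=\bigtimes_{i=1}^m\beta T$ is a compact right topological semigroup whose topological center contains $\bigtimes_{i=1}^m T\supseteq\phi[S_n]$, so by Lemma \ref{phi} the map $\phi$ extends to a continuous $S_0$-independent homomorphism $\phi:\beta S_n\to C$.

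Next, I invoke Lemma \ref{p in image} on $\phi$ and $\vec p$. The hypothesis of that lemma is exactly the assumption in the present theorem, namely that for every choice of $B_i\in p_i$ there exists $w\in S_n$ with $\phi(w)\in\bigtimes_{i=1}^m B_i$. The conclusion is that $\vec p\in\phi[\beta S_n]$; in particular $\vec p$ is an idempotent lying in $\phi[\beta S_n]$, which is what Corollary \ref{cornoncom} requires.

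Now fix $B_i\in p_i$ for $i\in\nhat m$ and set $U=\bigtimes_{i=1}^m\overline{B_i}$. This is a basic open neighborhood of $\vec p$ in $C$. Applying Corollary \ref{cornoncom} with this $\phi$, the given finite set $F$ of $S_0$-preserving homomorphisms, the piecewise syndetic set $D\subseteq S_0$, the idempotent $\vec p\in\phi[\beta S_n]$, and the neighborhood $U$, produces some $w\in S_n$ with $\phi(w)\in U$ and $\nu(w)\in D$ for every $\nu\in F$. The condition $\phi(w)\in U$ translates coordinatewise to $\tau_i(w)\in B_i$ for each $i\in\nhat m$ (since $\tau_i(w)\in T$ and $T\cap\overline{B_i}=B_i$), giving exactly the desired conclusion.

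The argument is essentially a repackaging: once the correct product-valued $\phi$ is identified, Lemma \ref{p in image} delivers the required preimage in $\beta S_n$ and Corollary \ref{cornoncom} supplies the simultaneous selection. There is no real obstacle; the only point that deserves a moment's care is confirming that $S_0$-independence of $\phi$ is inherited from the $S_0$-independence of each coordinate $\tau_i$, and that the coordinate projections of the basic neighborhood $U$ recapture the individual membership statements $\tau_i(w)\in B_i$.
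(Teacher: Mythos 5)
Your proof is correct and follows essentially the same route as the paper's: define $\phi = (\tau_1,\ldots,\tau_m)$, apply Lemma \ref{p in image} to obtain $\vec p \in \phi[\beta S_n]$, and then invoke Corollary \ref{cornoncom} with the neighborhood $\bigtimes_{i=1}^m\overline{B_i}$. The paper states this more tersely but the underlying steps are identical.
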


\begin{proof} Define $\phi:S_n\to \bigtimes_{i=1}^m T$ by $$\phi(w)=\big(\tau_1(w),\tau_2(w),\ldots,\tau_m(w)\big)\,.$$
Then $\phi$ is an $S_0$-independent homomorphism and hence 
by Lemma~\ref{p in image}, $\phi$ extends to a continuous $S_0$-independent homomorphism $\phi: \beta S_n\to \bigtimes_{i=1}^m\beta T$ and 
$\vec p\in\phi[\beta S_n]$. The result now follows from Corollary \ref{cornoncom}.
\end{proof}

\begin{corollary}\label{cornoncompl}
 Let $k,n\in\ben$ with $k<n$
and let $T$ be the set of words over $\{v_1,v_2,\ldots,v_k\}$
in which $v_i$ occurs for each $i\in\nhat{k}$. Given
$w\in S_n$ let $\tau(w)$ be obtained from $w$ by deleting all occurrences of
elements of $\A$ as well as all occurrences of $v_i$
for $k<i\leq n$ deleted. Let $\langle y_t\rangle_{t=1}^\infty$
be a sequence in $T$, let $F$ be a finite nonempty set of $S_0$-preserving homomorphisms
from $S_n$ to $S_0$, and let $D$ be a piecewise syndetic subset
of $S_0$. There exists $w\in S_n$ such that
$\nu(w)\in D$ for all $\nu\in F$ and
$\tau(w)\in FP(\langle y_t\rangle_{t=1}^\infty)$.
\end{corollary}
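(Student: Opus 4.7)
The plan is to apply Theorem \ref{thnoncom} in the case $m=1$, with the semigroup $T$ as defined in the statement and with $\tau_1=\tau$. For this I must verify three things: that $\tau$ is an $S_0$-independent homomorphism from $S_n$ into $T$, that there is an idempotent $p\in\beta T$ with $FP(\langle y_t\rangle_{t=1}^\infty)\in p$, and that for each $B\in p$ there exists $w\in S_n$ with $\tau(w)\in B$.

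First I would check that $T$ is a semigroup under concatenation, which is clear since concatenating two words, each of which contains every $v_i$ for $i\in\nhat{k}$, yields another such word. Next I would verify that $\tau:S_n\to T$ is well-defined: if $w\in S_n$ then $w$ contains every $v_i$ for $i\in\nhat{n}$, so after deleting all letters of $\A$ and all $v_i$ with $i>k$, the result still contains every $v_i$ with $i\in\nhat{k}$, hence lies in $T$. That $\tau$ is a homomorphism is immediate from the fact that deletion commutes with concatenation. For $S_0$-independence, if $u\in S_0$ and $w\in S_n$, then every letter of $u$ belongs to $\A$ and so is removed by $\tau$; therefore $\tau(uw)=\tau(w)=\tau(wu)$.

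For the idempotent, I would appeal to the standard construction recalled in the introduction: the set $\bigcap_{m=1}^\infty\overline{FP(\langle y_t\rangle_{t=m}^\infty)}$ is a compact subsemigroup of $\beta T$, so by \cite[Lemma 5.11]{HS} it contains an idempotent $p$, and $FP(\langle y_t\rangle_{t=1}^\infty)\in p$. To verify the surjectivity condition required by Theorem \ref{thnoncom}, I would show $\tau[S_n]=T$: given any $u\in T$, set $w=uv_{k+1}v_{k+2}\cdots v_n$. Then each $v_i$ for $i\in\nhat{n}$ occurs in $w$, so $w\in S_n$, and $\tau(w)=u$. Thus for every $B\in p$, choosing any $u\in B$ yields $w\in S_n$ with $\tau(w)=u\in B$.

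With all hypotheses of Theorem \ref{thnoncom} in place (taking $m=1$, $\tau_1=\tau$, $p_1=p$, and $B_1=FP(\langle y_t\rangle_{t=1}^\infty)\in p$), the theorem directly produces $w\in S_n$ with $\nu(w)\in D$ for every $\nu\in F$ and $\tau(w)\in FP(\langle y_t\rangle_{t=1}^\infty)$. There is no genuine obstacle here; the only point that requires a brief verification is the surjectivity of $\tau$ onto $T$, which ensures the ultrafilter $p$ is realized as the image of some element of $\beta S_n$ in the sense needed by Lemma \ref{p in image}.
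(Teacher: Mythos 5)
Your proposal is correct and follows essentially the same route as the paper: identify $\tau$ as an $S_0$-independent homomorphism from $S_n$ onto $T$, obtain an idempotent $p\in\beta T$ with $FP(\langle y_t\rangle_{t=1}^\infty)\in p$, and invoke Theorem \ref{thnoncom} with $m=1$. You simply spell out the verification of surjectivity (via $w=uv_{k+1}\cdots v_n$) and of $S_0$-independence, both of which the paper treats as immediate.
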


\begin{proof} As noted in the introduction, we can 
pick an idempotent $p\in\beta T$ such that
$FP(\langle y_t\rangle_{t=1}^\infty)\in p$. Since
$\tau$ is an $S_0$-independent homomorphism from $S_n$ onto $T$,
Theorem \ref{thnoncom} applies with $m=1$.\end{proof}

Theorem \ref{matrix} involves a matrix with entries from $\beq$ or $\bez$.
In order to ensure that matrix multiplication is distributive, 
we assume that $T$ is commutative and write
the operation as $+$.

\begin{theorem}\label{matrix} 
 Let $(T,+)$ be a commutative semigroup, let $k,m,n\in\ben$,
and let $M$ be a $k\times m$ matrix. If $T$ is not cancellative
assume that the entries of $M$ come from $\omega$. If $T$ is 
isomorphic to a subsemigroup of a direct sum of copies of 
$(\beq,+)$ (so that multiplication by members of $\beq$ makes sense),
assume that the entries of $M$ come from $\beq$. Otherwise
assume that the entries of $M$ come from $\bez$.
For $i\in\nhat{m}$
let $\tau_i$ be an $S_0$-independent homomorphism from $S_n$ to $T$.  
Define a function $\psi$ on $S_n$ by
$\psi(w)=\left(\begin{array}{c}\tau_1(w)\\
\tau_2(w)\\
\vdots\\
\tau_m(w)\end{array}\right)$. Let $\vec p=(p_1,p_2,\ldots,p_k)$ be an idempotent
in $\bigtimes_{i=1}^k\beta T$ with the property that
whenever $B_i\in p_i$ for each $i\in \nhat{k}$, 
there exists $\vec z\in \psi[S_n]$ such that
$M\vec z\in\bigtimes_{i=1}^k B_i$.
Let $F$ be a finite nonempty set of $S_0$-preserving homomorphisms from $S_n$ to $S_0$
and let $D$ be a piecewise syndetic subset of $S_0$. Then whenever
$B_i\in p_i$ for each $i\in\nhat{k}$, there exists
$w\in S_n$ such that $\nu(w)\in D$ for every $\nu\in F$
and $M\psi(w)\in\bigtimes_{i=1}^kB_i$.
\end{theorem}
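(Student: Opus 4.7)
The plan is to bundle the matrix action together with the coordinate homomorphisms $\tau_1,\ldots,\tau_m$ into a single $S_0$-independent homomorphism and then apply Corollary \ref{cornoncom} at an appropriate neighborhood of $\vec p$. Define $\phi:S_n\to T^k$ (in the $\omega$ case) or $\phi:S_n\to G^k$ (in the $\bez$ case, with $G$ the group of differences of $T$; or in the $\beq$ case with $G$ the direct sum of copies of $\beq$ containing $T$) by $\phi(w)=M\psi(w)$, so that the $i$-th coordinate of $\phi(w)$ is the linear combination $\sum_{j=1}^m M_{ij}\tau_j(w)$. Because $T$ is commutative, each such linear combination of $S_0$-independent homomorphisms is itself an $S_0$-independent homomorphism, and the trichotomy on the entries of $M$ is precisely what allows these combinations to be formed inside the ambient structure.

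Next I would apply Lemma \ref{p in image} to $\phi$ (or its natural generalization to $(\beta G)^k$ in the non-$\omega$ cases). The hypothesis on $\vec p$ is exactly the combinatorial content required: for every choice of $B_i\in p_i$ there exists $\vec z\in\psi[S_n]$, and hence some $w\in S_n$ with $\psi(w)=\vec z$, such that $\phi(w)=M\vec z\in\bigtimes_{i=1}^k B_i\subseteq T^k$. The collection $\{\{w\in S_n:\phi(w)\in\bigtimes_{i=1}^k B_i\}:B_i\in p_i\}$ has the finite intersection property, and any $q\in\beta S_n$ extending this filter base satisfies $\phi(q)=\vec p$ by the argument of Lemma \ref{p in image}; since each $B_i\subseteq T$, the coordinate ultrafilters of $\phi(q)$ automatically lie in $\beta T$, so in fact $\vec p\in\phi[\beta S_n]\subseteq(\beta T)^k$.

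Finally, given $B_i\in p_i$ for each $i\in\nhat k$, let $U=\bigtimes_{i=1}^k\overline{B_i}$, viewed as an open neighborhood of $\vec p$ inside the compact right topological semigroup $C=(\beta G)^k$ (with $G=T$ in the $\omega$ case). Since $\phi[S_n]\subseteq G^k$ sits inside the topological center of $C$, Corollary \ref{cornoncom} produces $w\in S_n$ with $\nu(w)\in D$ for every $\nu\in F$ and $\phi(w)\in U$. Because $\phi(w)=M\psi(w)$ is a principal ultrafilter at a point of $G^k$, membership in $\bigtimes_{i=1}^k\overline{B_i}$ forces that point to lie in $\bigtimes_{i=1}^k B_i$, which is the desired conclusion.

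The main obstacle I expect is the bookkeeping required in the $\bez$ and $\beq$ cases, where $\phi$ a priori maps into the enveloping structure $G^k$ rather than $T^k$; one must verify both that the ultrafilter $q$ produced in Lemma \ref{p in image} still yields $\phi(q)\in(\beta T)^k$ (it does, because the filter base is built from subsets of $T$) and that Corollary \ref{cornoncom} may legitimately be applied with $C$ taken to be $(\beta G)^k$ and the neighborhood $U$ taken inside this larger space. Once this is set up, the rest is a mechanical assembly of earlier results.
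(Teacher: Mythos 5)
Your proof follows essentially the same route as the paper's: define $\phi=M\psi$ valued in $(\beta G)^k$ with $G$ chosen by the trichotomy, use Lemma \ref{p in image} to conclude $\vec p\in\phi[\beta S_n]$, and then apply Corollary \ref{cornoncom} with $U=\bigtimes_{i=1}^k\overline{B_i}$. One small correction: the parenthetical claim $\phi[\beta S_n]\subseteq(\beta T)^k$ is not true in general when $G\ne T$; what you actually need --- and have correctly verified --- is only that the particular point $\vec p$ lies in $\phi[\beta S_n]$, so this slip does not affect the argument.
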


\begin{proof} If $T$ is not cancellative, let $G=T$. If $T$ is isomorphic
to a subsemigroup of $\bigoplus_{i\in I}\beq$ for some set $I$, assume that 
$T\subseteq\bigoplus_{i\in I}\beq$ and let $G=\bigoplus_{i\in I}\beq$. Otherwise
let $G$ be the group of differences of $T$. In each case we
define an $S_0$-independent  homomorphism $\phi:S_n\to \bigtimes_{j=1}^k G$. by $\phi(w)=M\psi(w)$.
Let $C=\bigtimes_{j=1}^k\beta G$. Then by Lemma~\ref{p in image}, $\phi$ extends to an $S_0$-independent homomorphism 
$\phi: \beta S_n\to C$ and  $\vec p\in\phi[\beta S_n]$. The rest now follows from  Corollary \ref{cornoncom}.\end{proof}

\begin{definition}\label{defmu} Let $n\in\ben$ and let $j\in\nhat{n}$.
Define $\mu_j:S_n\to\ben$ by $\mu_j(w)=|w|_{v_j}$.\end{definition}

The following corollary provides sufficient conditions for
applying Theorem \ref{matrix}.

\begin{corollary}\label{lowertriag} 
Let $m,n\in\ben$ with $m\leq n$. Let
$M$ be an $m\times m$ lower triangular matrix with rational
entries. Assume that the entries on the diagonal are positive and the entries
below the diagonal are negative or zero. Let $\vec p=(p_1,p_2,\ldots,p_m)$
be an idempotent in $\bigtimes_{i=1}^m\beta\ben$.
For $i\in\nhat{m}$ let
$\tau_i=\sum_{j=1}^n\alpha_{i,j}\mu_j$ where each $\alpha_{i,j}\in\beq$.
Assume that for each $i\in\nhat{m}$ we can choose $t(i)\in\nhat{n}$
such that $\alpha_{i,t(i)}>0$, if $l\in\nhat{m}$ and 
$l>i$, then $\alpha_{i,t(l)}=0$, and if $l\in\nhat{m}$
and $l<i$, then $\alpha_{i,t(l)}\leq 0$. Then each $\tau_i$ is an $S_0$-independent
homomorphism from $S_n$ to $\beq$. Let $F$ be a nonempty finite set of
$S_0$-preserving homomorphisms from $S_n$ to $S_0$ and let 
$D$ be a piecewise syndetic subset of $S_0$. Whenever
$B_i\in p_i$ for each $i\in\nhat{m}$, there exists
$w\in S_n$ such that $\nu(w)\in D$ for each $\nu\in F$
and $$M\left(\begin{array}{c}\tau_1(w)\\ \tau_2(w)\\
\vdots\\ \tau_m(w)\end{array}\right)\in\bigtimes_{i=1}^mB_i\,.$$
\end{corollary}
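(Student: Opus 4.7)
The plan is to invoke Theorem~\ref{matrix} with $T = \beq$; since $\beta\ben$ embeds in $\beta\beq$, the given $\vec p$ is also an idempotent in $\bigtimes_{i=1}^m \beta\beq$. That each $\tau_i$ is an $S_0$-independent homomorphism from $S_n$ to $\beq$ is routine, because each $\mu_j$ is one and rational linear combinations preserve this property. The bulk of the work is to verify the central hypothesis of Theorem~\ref{matrix}: for any $B_i \in p_i$, $i \in \nhat{m}$, some $w \in S_n$ must satisfy $M\psi(w) \in \bigtimes_{i=1}^m B_i$.

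First I would examine the structure of the $m \times n$ matrix $MA$, where $A = (\alpha_{i,j})$, so that $M\psi(w) = (MA)\mu(w)$ with $\mu(w) = (|w|_{v_1},\dots,|w|_{v_n})^T$. A short computation combining the lower triangularity of $M$ with the sign hypotheses on $\alpha_{k,t(l)}$ shows that $(MA)_{j,t(l)} = 0$ for $l > j$ and $(MA)_{j,t(j)} = M_{jj}\alpha_{j,t(j)} > 0$; the same hypotheses force $t$ to be injective. Thus raising $|w|_{v_{t(l)}}$ for $l > j$ has no effect on $(M\psi(w))_j$, which is what permits an inductive construction of $w$.

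The key arithmetic input is that every idempotent $p \in \beta\ben$ contains $k\ben$ for every $k \in \ben$, since the homomorphism $\ben \to \bez/k\bez$ sends idempotents to idempotents and the only idempotent of the finite group $\bez/k\bez$ is $0$. Choose $Q \in \ben$ clearing the denominators of all entries of $MA$, set $\alpha_j := Q M_{jj}\alpha_{j,t(j)} \in \ben$, and let $P_k := \alpha_k\alpha_{k+1}\cdots\alpha_m$ (with $P_{m+1} := 1$). Prescribe $|w|_{v_l} := QP_1$ for $l \notin \{t(1),\dots,t(m)\}$ and $|w|_{v_{t(l)}} := QP_{l+1}\,d_l$ for $l \in \nhat{m}$, with $d_l \in \ben$ to be chosen inductively. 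A direct expansion, using the telescoping identity $P_{l+1} = (\alpha_{l+1}\cdots\alpha_j)P_{j+1}$ for $l < j$, yields
\[
(M\psi(w))_j \;=\; \alpha_j P_{j+1}\,d_j \;+\; L_j,
\]
where $L_j \in \alpha_j P_{j+1}\bez$ depends only on $d_1,\dots,d_{j-1}$ and the fixed counts. Since $\alpha_j P_{j+1}\ben \in p_j$, the set $B_j \cap \alpha_j P_{j+1}\ben$ is infinite, and choosing any sufficiently large element of it determines a positive integer $d_j$ that forces $(M\psi(w))_j$ into $B_j$. Inducting on $j = 1, \dots, m$ produces the required $w$.

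The main obstacle is the divisibility constraint imposed by allowing $M$ and the $\alpha_{i,j}$ to have rational rather than integer entries: an arbitrary $B_j \in p_j$ need not meet an arbitrary shifted arithmetic progression, so both the clearing factor $Q$ and the fact that every $k\ben$ lies in every idempotent of $\beta\ben$ are essential for each inductive step to succeed.
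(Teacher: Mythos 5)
Your proof is correct, and it follows the same overall strategy as the paper's — invoke Theorem~\ref{matrix} with $T=\beq$, use the fact that every idempotent of $\beta\ben$ contains $k\ben$ for each $k$, and build $w$ one variable count at a time exploiting the triangular sign structure — but you streamline one step in a way worth noting. The paper proceeds in two stages: it first shows (using that $M^{-1}$ is lower triangular with positive diagonal and nonnegative subdiagonal entries) that for any $r$ one can find $\vec z\in(r\ben)^m$ with $M\vec z\in\bigtimes_{i=1}^m B_i$, and then, for a carefully chosen $r$, constructs $w$ with $\psi(w)=\vec z$. You collapse these into a single induction by working with the $m\times n$ matrix $MA$ directly, observing that $(MA)_{j,t(l)}=0$ for $l>j$ and $(MA)_{j,t(j)}=M_{jj}\alpha_{j,t(j)}>0$, so that at stage $j$ the free parameter $d_j$ shifts $(M\psi(w))_j$ by a positive multiple of $\alpha_jP_{j+1}$, while the already-fixed contributions $L_j$ lie in $\alpha_jP_{j+1}\bez$; picking a sufficiently large element of $B_j\cap\alpha_jP_{j+1}\ben\in p_j$ then determines $d_j\in\ben$. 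This avoids computing or reasoning about $M^{-1}$ entirely, replacing the paper's two separate divisibility bookkeeping arguments with one. The only details you leave implicit — that $t$ is injective (which you note must follow from the sign hypotheses, and it does: $t(i)=t(l)$ with $i<l$ would give $\alpha_{i,t(i)}>0$ and $\alpha_{i,t(l)}=0$ simultaneously), that all prescribed $|w|_{v_j}$ are positive so $w\in S_n$, and that every idempotent of $\beta\ben$ is non-principal so members of $p_j$ are infinite — are all routine and correctly sketched.
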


\begin{proof} Define $\psi:S_n\to \beq^m$ by $\psi(w)=\left(\begin{array}{c}\tau_1(w)\\
\vdots\\ \tau_m(w)\end{array}\right)$. We wish to apply Theorem \ref{matrix}
with $T=\beq$. For this we need to show that whenever $B_i\in p_i$ for
$i\in\nhat{m}$, there exists $\vec z\in\psi[S_n]$ such that $M\vec z\in\bigtimes_{i=1}^m B_i$.
So let $B_i\in p_i$ for $i\in\nhat{m}$.

We show first that for each $r\in\ben$, there exists $\vec z\in(r\ben)^m$ such that
$M\vec z\in\bigtimes_{i=1}^m B_i$, so let $r\in\ben$ be given.
Note that $M^{-1}$ is lower triangular with positive
diagonal entries and nonnegative entries below the
diagonal. (Probably the easiest way to see this
is to solve the system of equations $M\vec z=\vec x$
by back substitution. Alternatively we may write $M=D(I+N)$ where $D$ is diagonal with positive 
entries and $N$ is a strictly lower triangular matrix (all of whose non-zero entries are negative) 
verifying $N^m=O$. Setting $x=-N$ in $1-x^m=(1-x)(1+x+x^2+\cdots x^{m-1})$ gives 
$(I+N)^{-1}=I+\sum_{j=1}^{m-1}(-1)^jN^j$. Hence $(I+N)^{-1}$ is lower triangular with 
$1$s along the diagonal and nonnegative entries below the diagonal. 
Multiplying $(I+N)^{-1}$ by $D^{-1}$ on the right gives the desired result.)
Let $c\in\ben$ be such that
all entries of $cM^{-1}$ are nonnegative integers.
By \cite[Lemma 6.6]{HS} $rc\ben\in p_i$ for each $i\in\nhat{m}$
so pick $x_i\in B_i\cap rc\ben$. Letting
$\vec z=M^{-1}\vec x$ one has that $\vec z\in(r\ben)^m$
and $M\vec z\in\bigtimes_{i=1}^mB_i$.

Now assume we have chosen $t(i)$ for $i\in\nhat{m}$ as in the 
statement of the corollary. Pick $d\in\ben$ such that
$d\alpha_{i,j}\in\bez$ for each $i\in\nhat{m}$ and each $j\in\nhat{n}$
and let $\delta_{i,j}=d\alpha_{i,j}$.
 Let $$J=\nhat{n}\setminus\{t(1),t(2),\ldots,t(m)\}\,.$$ Let
$s=\prod_{i=1}^m\delta_{i,t(i)}$ and pick
$r\in\ben$ such that $s$ divides $r$ and 
$$\textstyle r>\max\big\{s\sum_{j\in J}|\delta_{i,j}|:i\in\nhat{m}\big\}\,.$$
Pick $\vec z\in(r\ben)^m$ such that
$M\vec z\in\bigtimes_{i=1}^m B_i$. We shall produce $w\in S_n$ such that
$\psi(w)=\vec z$ by determining $\mu_{j}(w)$ for each $j\in\nhat{n}$.
(To be definite, we then let $w=\prod_{j=1}^nv_j^{\mu_j(w)}$.)

For $j\in J$, let $\mu_j(w)=s$. Let 
$$\mu_{t(1)}(w)=d\frac{z_1}{\delta_{1,t(1)}}-{\textstyle \sum_{j\in J}}
\delta_{1,j}\frac{s}{\delta_{1,t(1)}}$$ and note that $\prod_{l=2}^m \delta_{l,t(l)}$ divides
$\mu_{t(1)}(w)$ and by the choice of $r$, $\mu_{t(1)}(w)>0$, as is, of course, required.
Now let $k\in\{2,3,\ldots,m\}$ and assume that for each
$i\in\nhat{k-1}$, we have chosen $\mu_{t(i)}(w)\in\ben$ such that
$\sum_{l=i+1}^m\delta_{l,t(l)}$ divides $\mu_{t(i)}(w)$. Then let
$$ \mu_{t(k)}(w)=d\frac{z_k}{\delta_{k,t(k)}}-
{\textstyle\sum_{i=1}^{k-1}}\frac{\delta_{k,t(i)}}{\delta_{k,t(k)}}\mu_{t(i)}(w)
-{\textstyle\sum_{j\in J}}s\frac{\delta_{k,j}}{\delta_{k,t(k)}}\,.$$
Then $\mu_{t(k)}(w)\geq \frac{1}{\delta_{k,t(k)}}(dz_k-
\sum_{j\in J}s\delta_{k,j})>0$ and, if $k<m$, then
$\sum_{l=k+1}^m\delta_{l,t(l)}$ divides $\mu_{t(k)}(w)$.

It is now a routine matter to verify that for
$k\in\nhat{m}$, $\tau_k(w)=\sum_{i=1}^k\alpha_{k,t(i)}\mu_{t(i)}(w)+\sum_{j\in J}
\alpha_{k,j}\mu_j(w)=z_k$.
\end{proof}

The sufficient conditions in Corollary \ref{lowertriag} on
the coefficients $\alpha_{i,j}$ of the homomorphisms $\tau_i$ apply to
all lower triangular matrices with positive
diagonal entries and entries below the diagonal
less than or equal to zero.  A complete solution
to the problem of which matrices and which $S_0$-independent
homomorphisms satisfy the hypotheses of Theorem \ref{matrix}
seems quite difficult. The following simple example illustrates
that one cannot get necessary and sufficient conditions on the 
coefficients of the homomorphisms $\tau_i$
valid for all lower triangular matrices with positive
diagonal entries and entries below the diagonal
less than or equal to zero. 

\begin{theorem}\label{example} Let $M=\left(\begin{array}{cc}1&0\\
-1&1\end{array}\right)$, let $N=\left(\begin{array}{cc}1&0\\
0&1\end{array}\right)$, let $\tau_1=2\mu_1+\mu_2$ and let
$\tau_2=\mu_1+2\mu_2$.
\begin{itemize}
\item[(1)] If $p_1$ and $p_2$ are any idempotents in $\beta\ben$,
$B_1\in p_1$, and $B_2\in p_2$, $F$ is a finite set of 
$S_0$-preserving homomorphisms from $S_2$ to $S_0$, and 
$D$ is a piecewise syndetic subset of $S_0$, then there exists $w\in S_2$
such that $M\left(\begin{array}{c}\tau_1(w)\\
\tau_2(w)\end{array}\right)\in\bigtimes_{i=1}^2B_i$ and 
$\nu(w)\in D$ for each $\nu\in F$.
\item[(2)] There exist idempotents $p_1$ and $p_2$ in $\beta\ben$ and
sets $B_1\in p_1$ and $B_2\in p_2$ for which there does not exist
$w\in S_2$
such that $N\left(\begin{array}{c}\tau_1(w)\\
\tau_2(w)\end{array}\right)\in\bigtimes_{i=1}^2B_i$.
\end{itemize}\end{theorem}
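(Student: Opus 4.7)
For part (1), the plan is to verify the hypothesis of Theorem \ref{matrix} with $T=\ben$ and $k=m=n=2$: namely, that for any $B_1\in p_1$ and $B_2\in p_2$ there is some $\vec z\in\psi[S_2]$ with $M\vec z\in B_1\times B_2$. A direct computation gives $M\psi(w)=(2\mu_1(w)+\mu_2(w),\mu_2(w)-\mu_1(w))^{T}$, so it suffices to find $a,b\in\ben$ with $2a+b\in B_1$ and $b-a\in B_2$. Since $p_1$ and $p_2$ are idempotents in $(\beta\ben,+)$, \cite[Lemma 6.6]{HS} supplies $3\ben\in p_1\cap p_2$. Pick any $c\in B_2\cap 3\ben$; because $B_1\cap 3\ben\in p_1$ is infinite, pick $d\in B_1\cap 3\ben$ with $d>c$. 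Then $a=(d-c)/3\geq 1$ and $b=c+a\geq 1$, so $w=v_1^{a}v_2^{b}\in S_2$ satisfies $M\psi(w)=(d,c)\in B_1\times B_2$, as required. Part (1) then follows immediately from Theorem \ref{matrix}.

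For part (2), I will construct the idempotents and sets explicitly. The key observation is that $(x_1,x_2)\in\ben^{2}$ arises as $(\tau_1(w),\tau_2(w))$ for some $w\in S_2$ if and only if $x_1+x_2\in 3\ben$ together with $2x_1-x_2\geq 3$ and $2x_2-x_1\geq 3$; in particular the size condition $x_1/2<x_2<2x_1$ is necessary. Setting $a_n=2^{2^n}$ so that $a_{n+1}=a_n^{2}$, I take $B_1=FS(\langle 3a_{2n}\rangle_{n=1}^{\infty})$ and $B_2=FS(\langle 3a_{2n+1}\rangle_{n=1}^{\infty})$, and pick idempotents $p_1,p_2\in\beta\ben$ with $B_1\in p_1$ and $B_2\in p_2$ via \cite[Lemma 5.11]{HS}.

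To rule out any $w\in S_2$ with $(\tau_1(w),\tau_2(w))\in B_1\times B_2$, I will first observe that the estimate $\sum_{i<m}a_{2i}<a_{2m}$ forces every $x_1\in B_1$ into some interval $[3a_{2m},6a_{2m}]$ (where $m$ is the largest index occurring in the defining $FS$-sum), and analogously every $x_2\in B_2$ lies in $[3a_{2k+1},6a_{2k+1}]$. Then I split on the comparison of $2m$ with $2k+1$: if $k\geq m$, the relation $a_{2k+1}\geq a_{2m+1}=a_{2m}^{2}$ gives $x_2\geq 3a_{2m}^{2}>12a_{2m}\geq 2x_1$, contradicting $x_2\leq 2x_1-3$; if $k\leq m-1$, the relation $a_{2m}\geq a_{2k+1}^{2}$ gives $x_1>2x_2$, contradicting $x_1\leq 2x_2-3$. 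Either way no admissible $w$ exists.

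I expect the main obstacle to be ruling out the most natural attempts. One cannot take $p_1=p_2$, for then any $3k$ in the common witness set is realized by $w=v_1^{k}v_2^{k}$; and a mod-$3$ separation is precluded because every idempotent in $\beta\ben$ contains $n\ben$ for every $n$ by \cite[Lemma 6.6]{HS}. The construction must therefore exploit pure geometry, and the doubly-lacunary $FS$-construction above is tailored to place $B_1$ and $B_2$ on scales so disparate that the necessary size condition $x_1/2<x_2<2x_1$ never holds, while still being supported by idempotents.
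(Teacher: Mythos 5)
Your proposal is correct and follows essentially the same route as the paper: for part~(1) you use $3\ben\in p_1\cap p_2$ (via \cite[Lemma 6.6]{HS}) to build a witness $w=v_1^{a}v_2^{b}$ and then invoke the matrix theorem, exactly as the paper does; for part~(2) you pick $FS$-sets whose members live on incompatible scales, pin each element to an interval governed by its largest summand, and contradict the necessary inequality $x_1/2<x_2<2x_1$. The only cosmetic difference is in part~(2), where the paper gets by with the singly-exponential sequences $FS(\langle 2^{4n}\rangle)$ and $FS(\langle 2^{4n+2}\rangle)$ while you use the doubly-exponential $a_n=2^{2^n}$; your choice is overkill but the separation argument is structurally identical.
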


\begin{proof} (1) Let $p_1$ and $p_2$ be idempotents in $\beta\ben$,
and let $B_1\in p_1$ and $B_2\in p_2$ be given. By Theorem 
\ref{thnoncom}, it suffices to show that there exists
$w\in S_2$
such that $M\left(\begin{array}{c}\tau_1(w)\\
\tau_2(w)\end{array}\right)\in\bigtimes_{i=1}^2B_i$
By \cite[Lemma 6.6]{HS},
$3\ben\in p_1$ and $3\ben\in p_2$. Pick $z_2\in B_2\cap 3\ben$ and pick
$z_1>z_2$ in $B_1\cap 3\ben$. Let $k_1= \frac{1}{3}z_1-\frac{1}{3}z_2$ and let
$k_2= \frac{1}{3}z_1-\frac{1}{3}z_2$. Let $w=v_1^{k_1}v_2^{k_2}$ so that
$\mu_1(w)=\frac{1}{3}z_1-\frac{1}{3}z_2$
and $\mu_2(w)=\frac{1}{3}z_1+\frac{2}{3}z_2$. Then 
$\tau_1(w)=z_1$, $\tau_2(w)=z_1+z_2$, and
$M\left(\begin{array}{c}\tau_1(w)\\
\tau_2(w)\end{array}\right)=\left(\begin{array}{c}z_1\\ z_2\end{array}\right)$.

(2) Let $B_1=FS(\langle 2^{4n}\rangle_{n=1}^\infty)$ and let
$B_2=FS(\langle 2^{4n+2}\rangle_{n=1}^\infty)$. By \cite[Lemma 5.11]{HS} pick
idempotents $p_1$ and $p_2$ in $\beta\ben$ such that $B_1\in p_1$ and $B_2\in
p_2$. Suppose we have some $w\in S_2$ and elements $z_1\in B_1$ and 
$z_2\in B_2$ such that $N\left(\begin{array}{c}\tau_1(w)\\
\tau_2(w)\end{array}\right)=\left(\begin{array}{c}z_1\\ z_2\end{array}\right)$.
Then $2z_1-z_2=3\mu_1(w)>0$ and $2z_2-z_1=3\mu_2(w)>0$ so
$z_2<2z_1$ and $z_1<2z_2$. Pick $F,G\in\pf(\ben)$ such that
$z_1=\sum_{t\in F}2^{4t}$ and $z_2=\sum_{t\in G}2^{4t+2}$.
Let $m=\max F$ and let $k=\max G$. Then
$2^{4m}\leq z_1<2^{4m+1}$ and $2^{4k+2}\leq z_2<2^{4k+3}$.
Then $2^{4m+2}>2z_1>z_2\geq 2^{4k+2}$ so $m\geq k+1$.
Also $2^{4k+4}>2z_2>z_1\geq 2^{4m}\geq 2^{4k+4}$, a contradiction.
\end{proof}

A $k\times m$ matrix $M$ is {\it image partition regular over $\ben$\/}
if and only if, whenever $\ben$ is finitely colored, there is some 
$\vec z\in\ben^m$ such that the entries of $M\vec z$ are
monochromatic.  This class includes all triangular (upper or lower)
matrices with rational entries and positive diagonal entries.
See \cite[Theorem 15.24]{HS} for half an alphabet of
characterizations of matrices that are image partition regular over $\ben$.

\begin{corollary}\label{IPR} Let $k, m,n\in\ben$ with $m\leq n$. Let
$M$ be a $k\times m$ matrix with rational entries which is image
partition regular over $\ben$. 
Let $p$ be a minimal idempotent in $\beta\ben$ and
let $\widehat{p}=(p,p,\ldots,p)\in \bigtimes_{i=1}^k \beta\ben$.
Let $\sigma$ be an injection
from $\nhat{m}$ to $\nhat{n}$.
For $i\in\nhat{m}$ let $\tau_i=\mu_{\sigma(i)}$.
Let $F$ be a nonempty finite set of $S_0$-preserving homomorphisms from $S_n$ to $S_0$ and
let $D$ be a piecewise syndetic subset of $S_0$. Then whenever
$B\in p$ there exists
$w\in S_n$ such that $\nu(w)\in D$ for each $\nu\in F$ and 
$$M\left(\begin{array}{c}\tau_1(w)\\ \tau_2(w)\\
\vdots\\ \tau_m(w)\end{array}\right)\in\bigtimes_{i=1}^kB\,.$$
\end{corollary}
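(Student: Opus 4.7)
The plan is to apply Theorem~\ref{matrix} directly, with $T=(\ben,+)$, which is a cancellative commutative subsemigroup of $\beq$, so rational entries of $M$ are permitted. Note that $\widehat{p}=(p,p,\ldots,p)$ is an idempotent in $\bigtimes_{i=1}^k\beta\ben$ because $p$ is, and each $\tau_i=\mu_{\sigma(i)}$ is an $S_0$-independent homomorphism from $S_n$ to $\ben$: for $u\in S_0$ and $w\in S_n$, one has $|uw|_{v_{\sigma(i)}}=|u|_{v_{\sigma(i)}}+|w|_{v_{\sigma(i)}}=|w|_{v_{\sigma(i)}}$ since $u$ contains no variables, and similarly on the right. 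Thus the homomorphism hypothesis of Theorem~\ref{matrix} is met; it only remains to verify the partition property of $\widehat{p}$ relative to $M$ and $\psi$.

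Accordingly, I would show: for any $B_1,B_2,\ldots,B_k\in p$ there exists $\vec z\in\psi[S_n]$ with $M\vec z\in\bigtimes_{i=1}^kB_i$. The first step is to observe that $\psi[S_n]\supseteq \ben^m$. Given $\vec z=(z_1,\ldots,z_m)\in\ben^m$, enumerate $\nhat{n}\setminus\sigma[\nhat{m}]$ as $j_1,j_2,\ldots,j_{n-m}$ (possibly empty) and set
\[
w=v_{\sigma(1)}^{z_1}v_{\sigma(2)}^{z_2}\cdots v_{\sigma(m)}^{z_m}v_{j_1}v_{j_2}\cdots v_{j_{n-m}}\,.
\]
Since $\sigma$ is injective and each $v_j$ for $j\in\nhat{n}$ occurs in $w$, one has $w\in S_n$, and $\mu_{\sigma(i)}(w)=z_i$ for $i\in\nhat{m}$, so $\psi(w)=\vec z$. (The padding by the missing variables is the one detail that needs care, since $S_n$ requires every $v_j$ to appear; otherwise this is routine.)

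Next, set $C=B_1\cap B_2\cap\cdots\cap B_k$. Since $p$ is a minimal idempotent of $\beta\ben$ and each $B_i\in p$, we have $C\in p$, so $C$ is central in $\ben$. By the characterization of image partition regularity over $\ben$ in \cite[Theorem~15.24]{HS}, for every central subset $C$ of $\ben$ there exists $\vec z\in\ben^m$ with all entries of $M\vec z$ lying in $C$. Thus $\vec z\in\ben^m\subseteq\psi[S_n]$ and $M\vec z\in C^k\subseteq\bigtimes_{i=1}^kB_i$.

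Having verified the hypothesis, apply Theorem~\ref{matrix} with $\vec p=\widehat{p}$, each $B_i$ replaced by the given $B\in p$ (take $B_i=B$ for all $i\in\nhat{k}$), the given finite set $F$ of $S_0$-preserving homomorphisms, and the given piecewise syndetic set $D\subseteq S_0$. The conclusion of Theorem~\ref{matrix} yields $w\in S_n$ with $\nu(w)\in D$ for every $\nu\in F$ and $M\psi(w)\in\bigtimes_{i=1}^kB$, which is exactly what is required. The only genuinely nontrivial step is producing the vector $\vec z$ via the central-sets characterization of IPR matrices; everything else is bookkeeping to fit our homomorphisms into the framework of Theorem~\ref{matrix}.
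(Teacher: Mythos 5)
Your proof is correct and follows essentially the same route as the paper's: reduce to Theorem~\ref{matrix} with $T=\ben$, use the injectivity of $\sigma$ to see that $\psi[S_n]\supseteq\ben^m$, note that $\bigcap_{i=1}^k B_i$ is central (since $p$ is a minimal idempotent), and invoke the central-sets characterization \cite[Theorem~15.24(h)]{HS} of image partition regular matrices to produce the required $\vec z$. The only difference is that you spell out the padding of $w$ with the unused variables, which the paper leaves implicit in the phrase ``which one may do because $\sigma$ is injective.''
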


\begin{proof} We note
that the mapping $$w\mapsto\big(\tau_1(w),\tau_2(w),\ldots,\tau_m(w)\big)$$
defines an $S_0$-independent homomorphism from $S_n$ onto $\ben^m$. So in order to apply Theorem \ref{matrix},
we must verify that whenever 
$B_i\in p$ for each $i\in\nhat{k}$, there exists
$\vec z\in \ben^m$ such that $M\vec z\in\bigtimes_{i=1}^k B_i$. We then
pick $w\in S_n$ such that $\tau_i(w)=z_i$ for each $i\in\nhat{m}$, which
one may do because $\sigma$ is injective.

Now $\bigcap_{i=1}^k B_i\in p$ so $B=\bigcap_{i=1}^k B_i$ is central
in $\ben$. By \cite[Theorem 15.24(h)]{HS} there exists $\vec z\in\ben^m$
such that $M\vec z\in B^k$. \end{proof}

Corollary \ref{IPR} applies to a much larger class of matrices
than Corollary \ref{lowertriag}, but is more restrictive
in that the same minimal idempotent must occur in each 
coordinate.  
Suppose we have a $k\times m$ matrix $M$ which is image partition regular over $\ben$. If
we knew that whenever $B_1,B_2,\ldots,B_k$ are central subsets of $\ben$,
there exist $\vec z\in\ben^m$ with $M\vec z\in\bigtimes_{i=1}^k B_i$, then
in Corollary \ref{IPR} we could allow $\vec p=(p_1,p_2,\ldots,p_k)$ to be
an arbitrary minimal idempotent in $\bigtimes_{i=1}^k\beta\ben$. We shall
see now that this fails.

\begin{theorem}\label{centralnogood} Let $M=\left(\begin{array}{cc} 1&1\\ 1&2\end{array}\right)$. 
Then $M$ is image partition regular over $\ben$.  For $x\in\ben$
let $\phi(x)=\max\{t\in\omega:2^t\leq x\}$ and for $i\in\{0,1,2,3\}$
let $B_i=\{x\in\ben:\phi(x)\equiv i\ (\mod\ 4)\}$.
Then $B_0$ and $B_2$ are central and there do not exist
$x$ and $y\in\ben$ such that $M\left(\begin{array}{c}x\\ y\end{array}\right)\in B_0\times B_2$.
\end{theorem}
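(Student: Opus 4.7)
My plan is to prove the three assertions of the theorem independently.

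For the image partition regularity of $M$ I would invoke van der Waerden's theorem: any finite coloring of $\ben$ admits a monochromatic three-term arithmetic progression $a,a+d,a+2d$ with $d\geq 1$, and then taking $x=a$ and $y=d$ makes the two entries $a+d$ and $a+2d$ of $M\left(\begin{array}{c}x\\y\end{array}\right)$ monochromatic.

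For the centrality of $B_0$ and $B_2$, my plan is to show that each is a thick subset of $(\ben,+)$, i.e., contains a translate of every finite set. This is immediate from the definition: for $i\in\{0,2\}$ and $F\in\pf(\ben)$, choosing $k$ with $2^{4k+i}>\max F$ gives $F+2^{4k+i}\subseteq[2^{4k+i},2^{4k+i+1})\subseteq B_i$. A standard ultrafilter argument, using continuity of $\rho_p$, then produces $p\in\beta\ben$ with $\beta\ben+p\subseteq\overline{B_i}$; since this left ideal contains a minimal left ideal, and every minimal left ideal contains a minimal idempotent, some minimal idempotent lies in $\overline{B_i}$ and $B_i$ is central.

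For the nonexistence claim I would argue by contradiction. Suppose $x,y\in\ben$ satisfy $a:=x+y\in B_0$ and $b:=x+2y\in B_2$. Since $y\geq 1$, $b>a$, so $\phi(b)\geq\phi(a)$; together with $\phi(a)\equiv 0$ and $\phi(b)\equiv 2\ (\mod\ 4)$, this forces $\phi(b)\geq\phi(a)+2$, whence $b\geq 2^{\phi(a)+2}=4\cdot 2^{\phi(a)}>2a$. But $2a-b=(2x+2y)-(x+2y)=x\geq 1$, so $b<2a$, a contradiction.

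The step requiring the most care is the centrality of $B_0$ and $B_2$. The naive route of observing that $FS(\langle 2^{4n}\rangle_{n=1}^\infty)\subseteq B_0$ only places $B_0$ in \emph{some} idempotent of $\beta\ben$, with no guarantee that this idempotent is minimal. The thickness argument sidesteps the issue by producing an entire left ideal of $\beta\ben$ inside $\overline{B_0}$, from which a minimal idempotent can be extracted.
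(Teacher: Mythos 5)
Your proof is correct, and it differs from the paper's in two of the three parts, in both cases by being more self-contained.

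For the image partition regularity, the paper simply cites \cite[Theorem 15.5]{HS}; your van der Waerden argument (take $x=a$, $y=d$ from a monochromatic $3$-AP so $x+y=a+d$ and $x+2y=a+2d$) is an elementary, correct replacement. For the centrality of $B_0$ and $B_2$, the paper observes that $\ben=\bigcup_{i=0}^3B_i$ forces some $B_i$ to be central and then invokes \cite[Lemma 15.23.2]{HS} to transfer centrality to all four $B_i$; your approach verifies thickness directly (each $B_i$ contains the dyadic block $[2^{4k+i},2^{4k+i+1})$, hence a translate of any finite set) and then extracts a minimal idempotent from the left ideal sitting inside $\overline{B_i}$. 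This is arguably cleaner, as it avoids the auxiliary lemma and explains \emph{why} these sets are central rather than shuffling the property between the $B_i$. Your proof of the nonexistence is essentially the paper's argument in a slightly different arrangement: the paper shows $\phi(x+2y)\in\{\phi(x+y),\phi(x+y)+1\}$ and derives a contradiction with the residues mod $4$; you push the residue condition first to get $\phi(b)\geq\phi(a)+2$ and then contradict $b<2a$. Both hinge on the same size estimate $x+y\leq x+2y<2(x+y)$.
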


\begin{proof} By \cite[Theorem 15.5]{HS} $M$ is image partition regular over
$\ben$. 
Since $\ben=\bigcup_{i=0}^3 B_i$ some $B_i$ is central. But then, by
\cite[Lemma 15.23.2]{HS}, each $B_i$ is central. Suppose we have some
$x,y\in\ben$ such that $M\left(\begin{array}{c}x\\ y\end{array}\right)\in B_0\times B_2$.
Let $n=\phi(x+y)$. Then $2^n\leq x+y<2^{n+1}$ so
$y<2^{n+1}-x$ and thus $2y<2^{n+2}-2x$ so
$x+2y<2^{n+2}-x<2^{n+2}$ and thus $\phi(x+2y)\in\{n,n+1\}$.\end{proof}

Note also that Corollary \ref{IPR} is more restrictive than
Corollary \ref{lowertriag} in that the idempotent $p$
is required to be minimal. It is well known and easy to
see that $FS(\langle 2^{2t}\rangle_{t=1}^\infty)$ does not
contain any three term arithmetic progression.  Consequently,
if 
$$M=\left(\begin{array}{cc}1&0\\ 1&1\\ 1&2\end{array}\right)\,,$$
then the assumption in Corollary \ref{IPR} that the idempotent
$p$ is minimal cannot be deleted.

We remarked in the introduction that 
we are not concerned with the instances of Theorem \ref{GRth} with $m>0$ because
the natural versions of our results in this section are not valid. 
The results in this section apply to all piecewise syndetic subsets of $S_0$.
In particular, they apply to central subsets. It was shown in 
\cite[Theorem 3.6]{CHS} that, given $a\in \A$,  there is a central set $D\in S_1$ such that
there is no $w\in S_2$ with $\{w(av_1),w(v_1a)\}\subseteq D$.

\section{Homomorphisms satisfying our hypotheses}\label{sechomo}

In Corollary \ref{lowertriag} we produced $S_0$-independent homomorphisms from $S_n$ to
$\beq$ as linear combinations of the functions $\mu_i$ with
coeffients from $\beq$. We shall see in
Corollary \ref{linear} that if $T$ is commutative and cancellative, then
the only $S_0$-independent homomorphisms $\varphi :S_n\to T$ are of the form
$\varphi(w)=\sum_{i=1}^n\mu_i(w)\cdot a_i$ where each $a_i$ is in the 
group of differences of $T$.

In Corollary \ref{IPR} we used $S_0$-independent homomorphisms
$\tau_i=\mu_{\sigma(i)}$
from $S_n$ to $\ben$ and the surjection $w\mapsto \big(\tau_1(w),\tau_2(w),\ldots,\tau_m(w)\big)$
from $S_n$ onto $\ben^m$.  
We show in Corollary \ref{surj3} that if $T=\ben$, then these are essentially
the only choices for $\tau_i$ satisfying the hypotheses of Theorem \ref{matrix}.

Recall that throughout this section $\A$ is a fixed nonempty finite alphabet.

\begin{definition}\label{defAb} Let $n\in\ben$. 
For $w\in S_n$, let $w'\in\{v_1,v_2,\ldots,v_n\}^+$ be obtained
from $w$ by deleting all occurrences in $w$ of letters belonging to $\A$.
\end{definition}

\begin{lemma}\label{lcanc} Fix $n\in \ben$. Let  $(T,+)$ be a cancellative 
semigroup and let $\varphi: S_n\to T$ be an $S_0$-independent  homomorphism. 
Then $\varphi(w)=\varphi(w')$ for all $w\in S_n$. 
\end{lemma}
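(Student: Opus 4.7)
The plan is to sandwich $w$ between two copies of the word $z:=v_1v_2\cdots v_n\in S_n$ and to induct on the number of letters of $\A$ occurring in $w$. First I would observe that since $\varphi$ is a homomorphism and $z,w,w'\in S_n$,
\[
\varphi(zwz)=\varphi(z)+\varphi(w)+\varphi(z)\hbox{\quad and\quad }\varphi(zw'z)=\varphi(z)+\varphi(w')+\varphi(z).
\]
Hence, once I show that $\varphi(zwz)=\varphi(zw'z)$, the conclusion $\varphi(w)=\varphi(w')$ follows by cancelling $\varphi(z)$ on the left and on the right, which is permitted because $T$ is cancellative.

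To prove the identity $\varphi(zwz)=\varphi(zw'z)$ I would induct on the number $k$ of occurrences of letters of $\A$ in $w$. The case $k=0$ is immediate since then $w=w'$. For the inductive step, pick any occurrence of an $\A$-letter $u$ in $w$ and write $w=\alpha u\beta$, where $\alpha$ and $\beta$ are (possibly empty) words over $\A\cup\{v_1,\ldots,v_n\}$. The crucial point is that $z\alpha$ and $\beta z$ each contain every variable $v_i$, so both belong to $S_n$, regardless of $\alpha$ and $\beta$. Consequently $zwz=(z\alpha)\bigl(u(\beta z)\bigr)$ is a product of two elements of $S_n$, and the homomorphism property together with $S_0$-independence (applied to $u\cdot (\beta z)$ with $u\in S_0$ and $\beta z\in S_n$) gives
\[
\varphi(zwz)=\varphi(z\alpha)+\varphi(u\beta z)=\varphi(z\alpha)+\varphi(\beta z)=\varphi(z\alpha\beta z).
\]
The word $\alpha\beta$ still contains every $v_i$, hence lies in $S_n$, has $k-1$ letters from $\A$, and satisfies $(\alpha\beta)'=w'$, so the inductive hypothesis applied to $\alpha\beta$ yields $\varphi(z\alpha\beta z)=\varphi(zw'z)$ and closes the induction.

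The main obstacle is that one cannot simply split $w$ itself around an internal $\A$-letter and invoke the homomorphism property: a prefix of $w$ need not contain every $v_i$ and so need not lie in $S_n$. Sandwiching $w$ between the two copies of $z$ repairs this defect once and for all, and the cancellation of $\varphi(z)$ at the very end is precisely where the cancellativity hypothesis on $T$ enters.
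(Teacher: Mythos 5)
Your proof is correct and uses the same sandwiching device as the paper: both proofs bracket the word with $v_1v_2\cdots v_n$, apply the homomorphism property and $S_0$-independence to delete interior letters of $\A$, and then cancel $\varphi(v_1\cdots v_n)$ on both sides. The only difference is that you make explicit the induction on the number of $\A$-letters, whereas the paper states the single-deletion step $\varphi(w_1uw_2)=\varphi(w_1w_2)$ and leaves the iteration (which again needs the sandwich, since a proper prefix of $w$ need not lie in $S_n$) to the reader.
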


\begin{proof}  It suffices to show that if $w_1,w_2\in S_n$ and $u\in S_0$, then 
$\varphi(w_1uw_2)=\varphi(w_1w_2)$.  Let $v=v_1v_2\cdots v_n$. 
On one hand $\varphi(vw_1uw_2v)=\varphi(v) + \varphi(w_1uw_2) + \varphi(v)$, and on the other hand
$\varphi(vw_1uw_2v)=\varphi(vw_1u) +\varphi(w_2v)=\varphi(vw_1) + \varphi(w_2v)=\varphi(vw_1w_2v)=\varphi(v) + \varphi(w_1w_2) + \varphi(v)$.
 The result now follows. \end{proof}

\begin{lemma}\label{Ab} Fix $n\in \ben$. Let  $(T,+)$ be a cancellative and commutative 
semigroup and let $\varphi: S_n\to T$ be an $S_0$-independent  homomorphism. 
For each $w_1,w_2\in S_n$ we have $\varphi(w_1)=\varphi(w_2)$ whenever
$w'_1\sim_{Ab}w'_2$. \end{lemma}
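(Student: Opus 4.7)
My plan is first to reduce, via Lemma~\ref{lcanc}, to the case of pure variable words, and then to show directly that $\varphi$ of such a word depends only on the multiset of its letters. Given $w_1,w_2\in S_n$ with $w'_1\sim_{Ab}w'_2$, Lemma~\ref{lcanc} yields $\varphi(w_i)=\varphi(w'_i)$, and both $w'_1$ and $w'_2$ lie in $\{v_1,\ldots,v_n\}^+\cap S_n$ (deleting $\A$-letters preserves the number of occurrences of each $v_i$). So I may assume that $w_1$ and $w_2$ are pure variable words in $S_n$.

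Set $e=v_1v_2\cdots v_n\in S_n$ and note that $ev_i\in S_n$ for every $i\in\nhat{n}$, since $e$ already contains every variable. For any $u\in S_n$ and any $i\in\nhat{n}$, the word $ev_iu$ admits the two factorizations $(e)(v_iu)$ and $(ev_i)(u)$ as products of elements of $S_n$ (here $v_iu\in S_n$ because $u\in S_n$). Equating the two resulting expressions for $\varphi(ev_iu)$ and using commutativity of $T$ gives the key identity
$$\varphi(v_iu)+\varphi(e)=\varphi(u)+\varphi(ev_i).$$

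Next, given a pure variable word $w=v_{i_1}v_{i_2}\cdots v_{i_k}\in S_n$, I would iterate this identity by taking $u$ successively to be $v_{i_2}\cdots v_{i_k}e$, $v_{i_3}\cdots v_{i_k}e$, \ldots, $e$; each of these is in $S_n$ because it ends in $e$. Adding the $k$ resulting equations and cancelling the common telescoping terms yields
$$\varphi(we)+k\varphi(e)=\varphi(e)+\sum_{j=1}^k\varphi(ev_{i_j}).$$
By commutativity of $T$, the right-hand side depends only on the multiset $\{i_1,\ldots,i_k\}$. Cancellativity of $T$ then transfers this dependence first to $\varphi(we)$ and, using $\varphi(we)=\varphi(w)+\varphi(e)$, to $\varphi(w)$ itself. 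Since $w'_1\sim_{Ab}w'_2$ is precisely the assertion that they share a common variable multiset, this forces $\varphi(w'_1)=\varphi(w'_2)$, as required.

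The main obstacle is structural: an individual variable $v_i$ does not belong to $S_n$ when $n\geq 2$, so one cannot simply factor $\varphi$ across a pair of adjacent variables and invoke commutativity of $T$ to swap them. The remedy is to use the universal word $e=v_1\cdots v_n$ as a catalyst, ensuring that every partial product encountered in the iteration remains in $S_n$; once the key identity above is available, commutativity and cancellativity of $T$ carry the argument to its conclusion.
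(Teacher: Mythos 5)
Your proof is correct, and it takes a genuinely different route from the paper's. The paper reduces the problem to two generating operations of the symmetric group --- swapping the first two letters and cyclically rotating --- and verifies each separately, with the 2-cycle handled by the identity $\varphi(xw)+\varphi(xwx)=\varphi(xwxwx)=\varphi(xwx)+\varphi(wx)$ and the rotation handled via the catalyst $v=v_1\cdots v_n$. You instead derive a single closed-form identity $\varphi(we)+k\varphi(e)=\varphi(e)+\sum_{j=1}^k\varphi(ev_{i_j})$ whose right-hand side is manifestly invariant under permutations of the multiset $\{i_1,\ldots,i_k\}$, so no appeal to generators of $\mathfrak{S}_m$ is needed. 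Both proofs hinge on the same catalyst word $e=v_1\cdots v_n$ to keep partial products in $S_n$, and both use commutativity and cancellativity of $T$ in the same essential way; yours is a shade more direct and incidentally foreshadows the linear representation $\varphi(w)=\sum_i\mu_i(w)\cdot a_i$ established later in Corollary~\ref{linear}, since your identity already exhibits $\varphi$ as a sum of contributions, one per letter. One small presentational note: the "cancelling the common telescoping terms" step is correct but is worth spelling out --- summing the $k$ instances of the key identity for $j=1,\dots,k$ with $u_j=v_{i_{j+1}}\cdots v_{i_k}e$ produces $\varphi(u_1),\dots,\varphi(u_{k-1})$ on both sides, and cancellativity then removes them, leaving exactly the stated equation; this does require that you add all equations before cancelling, since an individual cancellation would require knowing $\varphi(u_j)$ appears on both sides of a single equation, which it does not.
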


\begin{proof} Assume $w_1,w_2\in S_n$ and $w'_1\sim_{Ab}w'_2$. 
Let $m=|w'_1|=|w'_2|$. We show that $\varphi(w'_1)=\varphi(w'_2)$ which in turn implies that
$\varphi(w_1)=\varphi(w_2)$ by Lemma \ref{lcanc}.  
The result is immediate in case $n=1$ for in this case $w'_1=w'_2=v_1^m$. 
So let us assume that $n\geq 2$ in which case $m\geq 2$. 
Since the symmetric group on $m$-letters is generated by the 
$2$-cycle $(1,2)$ and the $m$-cycle $(1,2,\ldots , m)$ 
it suffices to show
\begin{itemize}
\item[(i)] If $x,w\in  (\A\cup \{v_1,v_2,\ldots ,v_n\})^+$ and $xw\in S_n$,
then $wx \in S_n$ and $\varphi(wx)=\varphi(xw)$.  
\item[(ii)] Let $\varepsilon$ be the empty word. 
If $x,y \in  (\A\cup \{v_1,v_2,\ldots ,v_n\})^+$, $w\in(\A\cup \{v_1,v_2,\ldots ,v_n\})^+\cup\{\varepsilon\}$
and $xyw\in S_n$, then $yxw \in S_n$ and $\varphi(yxw)=\varphi(xyw)$. 
\end{itemize}

Then given $l_1,l_2,\ldots,l_m\in \A\cup \{v_1,v_2,\ldots ,v_n\}$ by (i) we have
$\varphi(l_1l_2\cdots l_m)=\varphi(l_2l_3\cdots l_ml_1)$ and by (ii) we have
$\varphi(l_1l_2l_3\cdots l_m)=\varphi(l_2l_1l_3\cdots l_m)$.

To establish (i), we have $\varphi(xw) + \varphi(xwx)=\varphi(xwxwx)=\varphi(xwx)+ \varphi(wx)$, whence
$\varphi(xw)=\varphi(wx)$. Note that we are using here that $T$ is commutative. 
For (ii), let $v=v_1v_2\cdots v_n$.
Then, using (i) twice,  $\varphi(v)+\varphi(xyw)+\varphi(v)=\varphi(vxywv)=
\varphi(vx)+\varphi(ywv)=\varphi(xv)+\varphi(ywv)=\varphi(xvywv)=
\varphi(xvy)+\varphi(wv)=\varphi(vyx)+\varphi(wv)=\varphi(vyxwv)=
\varphi(v)+\varphi(yxw) +\varphi(v)$. The result now follows. 
\end{proof}

We remark that  Lemma~\ref{Ab} does not hold in general if $T$ is not commutative. 
For example, consider the homomorphism $\varphi :S_3\to S_2$ where $\varphi(w)$ is 
the word in $S_2$ obtained from $w$ by deleting all occurrences of the variable $v_3$ in addition to 
all letters belonging to $\A.$ Then $S_2$ is cancellative and 
$\varphi$ is an $S_0$-independent homomorphism. 
However, $\varphi(v_1v_2v_3)=v_1v_2\neq v_2v_1=\varphi(v_2v_1v_3)$ 
yet $v_1v_2v_3\sim_{Ab}v_2v_1v_3$. 

\begin{theorem}\label{surj1} Fix $n\in \ben$. Let  $(T,+)$ be a cancellative and commutative 
semigroup and let $\varphi: S_n\to T$ be an $S_0$-independent homomorphism. Then 
there exists a homomorphism $f:\ben^n\to T$ such that
$\varphi(w)=f\big(\mu_1(w),\mu_2(w),\ldots,\mu_n(w)\big)$ for all $w\in S_n$.\end{theorem}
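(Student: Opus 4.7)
The plan is to simply read the content of Lemma \ref{Ab} as the statement that $\varphi$ factors through the Abelianization. Specifically, I would define $f:\ben^n\to T$ by
\[
f(k_1,k_2,\ldots,k_n)=\varphi(v_1^{k_1}v_2^{k_2}\cdots v_n^{k_n}),
\]
which makes sense because $k_i\geq 1$ (in this paper $\ben$ excludes $0$), so the word $v_1^{k_1}\cdots v_n^{k_n}$ lies in $S_n$.

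First, I would verify the factorization $\varphi(w)=f(\mu_1(w),\ldots,\mu_n(w))$. For any $w\in S_n$, the word $v_1^{\mu_1(w)}v_2^{\mu_2(w)}\cdots v_n^{\mu_n(w)}$ has the same number of occurrences of each $v_i$ as $w'$ does, and contains no letters of $\A$, so it is Abelian equivalent to $w'$. Lemma \ref{Ab} then gives $\varphi(w)=\varphi(v_1^{\mu_1(w)}\cdots v_n^{\mu_n(w)})=f(\mu_1(w),\ldots,\mu_n(w))$, as desired.

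Second, I would check that $f$ is a homomorphism from $(\ben^n,+)$ (componentwise addition) into $(T,+)$. Given $\vec k,\vec l\in\ben^n$, the concatenation $v_1^{k_1}\cdots v_n^{k_n}v_1^{l_1}\cdots v_n^{l_n}$ belongs to $S_n$ and has exactly $k_i+l_i$ occurrences of $v_i$ for every $i$, hence is Abelian equivalent to $v_1^{k_1+l_1}\cdots v_n^{k_n+l_n}$. Using that $\varphi$ is a homomorphism together with Lemma \ref{Ab}, we obtain
\[
f(\vec k)+f(\vec l)=\varphi(v_1^{k_1}\cdots v_n^{k_n})+\varphi(v_1^{l_1}\cdots v_n^{l_n})=\varphi(v_1^{k_1}\cdots v_n^{k_n}v_1^{l_1}\cdots v_n^{l_n})=f(\vec k+\vec l).
\]

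There is no real obstacle here; the substantive content has already been extracted in Lemmas \ref{lcanc} and \ref{Ab}, and the theorem is essentially a repackaging of Lemma \ref{Ab} as a universal property. The only minor point worth noting is the convention $1\in\ben$, which ensures both that $f$ is defined on its full stated domain and that every tuple $(\mu_1(w),\ldots,\mu_n(w))$ arising from an $n$-variable word lies in $\ben^n$.
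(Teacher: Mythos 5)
Your proposal is correct and uses exactly the paper's approach: define $f(x_1,\ldots,x_n)=\varphi(v_1^{x_1}\cdots v_n^{x_n})$ and invoke Lemma \ref{Ab}. The paper compresses the factorization and homomorphism checks into the single phrase ``$f$ is as required,'' and you have simply spelled those two verifications out; nothing is different in substance.
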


\begin{proof} Define $f:\ben^n\to T$ by $f(x_1,x_2,\ldots,x_n)=
\varphi(v_1^{x_1}v_2^{x_2}\cdots v_n^{x_n})$. By Lemma \ref{Ab}, $f$ is as
required.\end{proof}

\begin{corollary}\label{linear} Let $n\in\ben$, let $(T,+)$ be a commutative
and cancellative semigroup, let $G$ be the group of differences of $T$,
and let $\varphi$ be an $S_0$-independent homomorphism from $S_n$ to $T$. There
exist $a_1,a_2,\ldots,a_n$ in $G$ such that for each
$w\in S_n$, $\varphi(w)=\sum_{i=1}^n\mu_i(w)\cdot a_i$.
\end{corollary}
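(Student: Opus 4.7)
The plan is to reduce to Theorem \ref{surj1} and then show that any semigroup homomorphism $f:\ben^n\to T$ into a cancellative commutative semigroup extends linearly to the group of differences. By Theorem \ref{surj1}, there is a homomorphism $f:\ben^n\to T$ such that $\varphi(w)=f\big(\mu_1(w),\mu_2(w),\ldots,\mu_n(w)\big)$ for all $w\in S_n$. Since $T\subseteq G$, it suffices to show that any semigroup homomorphism $f:\ben^n\to G$ into a commutative group $G$ has the form $f(x_1,\ldots,x_n)=\sum_{i=1}^n x_i\cdot a_i$ for some $a_1,\ldots,a_n\in G$.

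To produce the $a_i$, I would extend $f$ to a group homomorphism $\widehat f:\bez^n\to G$ as follows. Let $\mathbf 1=(1,1,\ldots,1)\in\ben^n$ and let $e_i$ denote the standard basis vector in $\bez^n$. For each $y\in\bez^n$, choose $k\in\ben$ large enough that $y+k\mathbf 1\in\ben^n$, and set
\[
\widehat f(y)=f(y+k\mathbf 1)-k\cdot f(\mathbf 1).
\]
Independence of $k$ follows from the fact that $f(y+(k+1)\mathbf 1)=f(y+k\mathbf 1)+f(\mathbf 1)$, and the homomorphism property $\widehat f(y+z)=\widehat f(y)+\widehat f(z)$ follows by choosing $k$ to work simultaneously for $y$, $z$, and $y+z$ and applying the semigroup homomorphism property of $f$. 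On $\ben^n$ itself, one recovers $\widehat f(x)=f(x)$ by taking $k=1$ and using $f(x+\mathbf 1)=f(x)+f(\mathbf 1)$.

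Setting $a_i=\widehat f(e_i)\in G$, the fact that $G$ is commutative gives $\widehat f(y_1,\ldots,y_n)=\sum_{i=1}^n y_i\cdot a_i$ for all $y\in\bez^n$, and in particular for $y=\big(\mu_1(w),\ldots,\mu_n(w)\big)\in\ben^n$ this yields $\varphi(w)=\sum_{i=1}^n \mu_i(w)\cdot a_i$, as required. The only mildly delicate step is verifying well-definedness and additivity of $\widehat f$; this is routine using cancellation in $G$ together with commutativity, so no real obstacle arises once Theorem \ref{surj1} is in hand.
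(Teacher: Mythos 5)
Your proof is correct and takes a genuinely different route from the paper's. Both proofs begin by invoking Theorem \ref{surj1} to produce a semigroup homomorphism $f:\ben^n\to T$ with $\varphi(w)=f\big(\mu_1(w),\ldots,\mu_n(w)\big)$, but they diverge after that. The paper proceeds directly: it introduces the vectors $\vec z^{\,[j]}$ (with a $2$ in position $j$ and $1$s elsewhere), sets $c=f(\widehat 1)$ and $a_j=f(\vec z^{\,[j]})-c$, observes from $\sum_j \vec z^{\,[j]}=(n+1)\widehat 1$ that $c=\sum_j a_j$, and then proves $f(\vec x)=\sum_j x_j\cdot a_j$ by induction on $\sum_j x_j$, peeling off one $\widehat 1$ at a time using the identity $\vec x+\widehat 1=(\ldots,x_j-1,\ldots)+\vec z^{\,[j]}$. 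You instead build the canonical extension $\widehat f:\bez^n\to G$ via $\widehat f(y)=f(y+k\mathbf 1)-k\cdot f(\mathbf 1)$, check well-definedness and additivity by cancellation, and then read off $\widehat f(y)=\sum_i y_i\cdot a_i$ from the universal property of the free abelian group $\bez^n$. Your route is somewhat more conceptual and modular — once one knows that a semigroup homomorphism into a cancellative commutative semigroup extends to the group of differences (and that $\ben^n$'s group of differences is $\bez^n$), the linear form is immediate — while the paper's inductive argument is self-contained and avoids introducing the extension machinery. Each is a complete and correct proof.
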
 

\begin{proof} Pick a homomorphism $f:\ben^n\to T$ as guaranteed by Theorem \ref{surj1}.
For $j\in\nhat{n}$, define $\vec z^{\,[j]}\in\ben^n$ by, for $i\in\nhat{n}$,
\begin{equation}\label{eq1}z^{\,[j]}_i=\left\{\begin{array}{cl}2&\hbox{if }i=j\\
1&\hbox{if }i\neq j\,.\end{array}\right.\end{equation} Let $\widehat 1=(1,1,\ldots,1)\in\ben^n$.
Let $c=f(\,\widehat 1\,)$ and for $j\in\nhat{n}$, let $a_j=f(\vec z^{\,[j]})-c$.
Then \[(n+1)\cdot c=f(n+1,n+1,\ldots,n+1)=\sum_{j=1}^nf(\vec z^{\,[j]})=(\sum_{j=1}^n a_j)+n\cdot c\]
so $c=\sum_{j=1}^n a_j$. 

We claim that $f(x_1,x_2,\ldots,x_n)=\sum_{j=1}^n x_j\cdot a_j$ for all $(x_1,x_2,\ldots,x_n)\in \ben^n.$ 
To see this we proceed by induction on $\sum_{j=1}^n x_j.$ If $\sum_{j=1}^n x_j=n$ then $(x_1,x_2,\ldots,x_n)=\widehat 1$
whence \[\textstyle f(x_1,x_2,\ldots,x_n)=c=\sum_{j=1}^n a_j=\sum_{j=1}^n 1\cdot a_j\,.\] Next let $N\geq n$ and suppose that
$f(x_1,x_2,\ldots,x_n)=\sum_{j=1}^n x_j\cdot a_j$ for all $(x_1,x_2,\ldots,x_n)\in \ben^n$ with $\sum_{j=1}^nx_j\leq N.$
Let $(x_1,x_2,\ldots,x_n)\in \ben^n$ be such that $\sum_{j=1}^nx_j= N+1.$
Pick $j\in\nhat{n}$ such that $x_j\geq 2$. Then
$$f(x_1,x_2,\ldots,x_n)+f(\,\widehat{1}\,)=
f(x_1,\ldots,x_{j-1},x_j-1,x_{j+1},\ldots,x_n)+f(\vec z^{\,[j]}).$$ 
Since $f(\vec z^{\,[j]})-f(\,\widehat{1}\,)=a_j$ it follows by our induction hypothesis that

\[\textstyle f(x_1,x_2,\ldots,x_n)=\sum_{i=1}^{j-1}x_i\cdot a_i+
(x_j-1)\cdot a_j+ \sum_{i=j+1}^nx_i\cdot a_i +a_j
=\sum_{i=1}^n x_i\cdot a_i\,.
\]
Consequently, for all $w\in S_n$, 
\[\textstyle \varphi(w)=f\big(\mu_1(w),\mu_2(w),\ldots,\mu_n(w)\big)
=\sum_{j=1}^n\mu_j(w)\cdot a_j\,.\]
\end{proof}

In the proof of the next lemma, we shall use the fact that
if $n\in\ben$,\break $f:\ben^n\to\ben$ is a homomorphism, 
$\vec x,\vec y^{\,[1]},\vec y^{\,[2]},\ldots,\vec y^{\,[n]}\in\ben^n$, 
$\alpha_1,\alpha_2,\ldots,\alpha_n\in \bez$, and $\vec x=
\sum_{i=1}^n\alpha_i \vec y ^{\,[i]}$, then
\[\textstyle f(\vec x)=\sum_{i=1}^n\alpha_i f(\vec y ^{\,[i]})\,.\] We note that if $\alpha_i\leq 0$, then
$f$ is not defined at $\alpha_i\vec y^{\,[i]}$.
So to verify the above equality, let $I=\{i\in\nhat{n}:\alpha_i<0\}$ and let
$J=\{i\in\nhat{n}:\alpha_i>0\}$. Then
\[\textstyle \vec x+\sum_{i\in I}(-\alpha_i)\vec y^{\,[i]}=\sum_{i\in J}\alpha_i\vec y^{\,[i]}\] so
\[\textstyle f(\vec x)+\sum_{i\in I}(-\alpha_i)f(\vec y^{\,[i]})=\sum_{i\in J}\alpha_i f(\vec y^{\,[i]})\] so
\[\textstyle f(\vec x)=\sum_{i\in I\cup J}\alpha_i f(\vec y^{\,[i]})=\sum_{i=1}^n\alpha_i f(\vec y^{\,[i]})\,.\]

\begin{lemma}\label{surj0} Let $n\in \ben$ and 
$f: \ben^n \to \ben$ be a surjective homomorphism. Then there 
exists $i\in\nhat{n}$ such that
$f(\vec x)=x_i$ for each $\vec x =(x_1,x_2,\ldots,x_n)\in \ben^n$, i.e., 
$f$ is the projection onto the $i$'th coordinate.  \end{lemma}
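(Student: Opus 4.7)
The plan is first to reduce $f$ to a linear form with integer coefficients via Corollary~\ref{linear}, and then use the fact that $f$ takes values in $\ben$ together with surjectivity to force exactly one coefficient to be $1$ and the rest to be $0$.

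For the reduction, define $\varphi:S_n\to\ben$ by $\varphi(w)=f\bigl(\mu_1(w),\mu_2(w),\ldots,\mu_n(w)\bigr)$. Since each $\mu_j$ is a homomorphism and $\mu_j(uw)=\mu_j(w)=\mu_j(wu)$ for every $u\in S_0$, the map $\varphi$ is an $S_0$-independent homomorphism from $S_n$ to $\ben$. The group of differences of $(\ben,+)$ is $\bez$, so by Corollary~\ref{linear} there exist $a_1,a_2,\ldots,a_n\in\bez$ such that $\varphi(w)=\sum_{j=1}^{n}\mu_j(w)\cdot a_j$. Evaluating $\varphi$ at $w=v_1^{x_1}v_2^{x_2}\cdots v_n^{x_n}$ for an arbitrary $\vec x\in\ben^n$ yields
\[
f(\vec x)=\sum_{j=1}^{n}x_j\,a_j\qquad\hbox{for every }\vec x\in\ben^n.
\]

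Next, I would argue that each $a_j$ is nonnegative. If $a_j<0$ for some $j$, take $\vec x^{(N)}\in\ben^n$ with $j$-th coordinate equal to $N$ and all other coordinates equal to $1$; then $f(\vec x^{(N)})=Na_j+\sum_{i\neq j}a_i\to-\infty$ as $N\to\infty$, contradicting $f[\ben^n]\subseteq\ben$. Hence $a_j\geq 0$ for every $j\in\nhat{n}$. In particular, evaluating at $\widehat 1=(1,1,\ldots,1)$ gives $f(\widehat 1)=\sum_{j=1}^{n}a_j\in\ben$, so $\sum_{j=1}^{n}a_j\geq 1$.

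Finally, use surjectivity: pick $\vec y\in\ben^n$ with $f(\vec y)=1$. Since every $y_j\geq 1$ and every $a_j\geq 0$, we have $1=\sum_{j=1}^{n}y_j a_j\geq \sum_{j=1}^{n}a_j\geq 1$, so both inequalities are equalities. The equality $\sum y_j a_j=\sum a_j$ with $y_j\geq 1$ and $a_j\geq 0$ forces $a_j=0$ whenever $y_j>1$, and the total being exactly $1$ forces exactly one index $i\in\nhat{n}$ with $a_i=1$ and $a_j=0$ for $j\neq i$. Consequently $f(\vec x)=x_i$ for every $\vec x\in\ben^n$, as required. There is no real obstacle here; the only subtle point is the justification of the linear form, which is handled by routing through Corollary~\ref{linear}.
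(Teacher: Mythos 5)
Your proof is correct, and it takes a genuinely different route from the paper's. The paper proves the lemma directly from the homomorphism property, without ever writing $f$ in linear form: it first shows $f(\widehat 1)=1$ via the identity $r\vec x=(r-1)\widehat 1+\bigl(1+r(x_1-1),\ldots,1+r(x_n-1)\bigr)$ and a limiting argument, then uses $\sum_j \vec z^{\,[j]}=(n+1)\widehat 1$ to isolate the unique $k$ with $f(\vec z^{\,[k]})=2$, and finally expands $(n+1)\vec x$ as a $\bez$-linear combination of the $\vec z^{\,[j]}$ to read off $f(\vec x)=x_k$. You instead route through Corollary~\ref{linear} (whose proof already contains, as an intermediate step, the fact that a homomorphism $\ben^n\to T$ with $T$ commutative cancellative is given by $\vec x\mapsto\sum_j x_j a_j$ with $a_j$ in the group of differences), then use range constraints to show each $a_j\ge 0$, and use surjectivity at the value $1$ to force exactly one $a_i=1$ with the rest zero. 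Both are correct; yours is conceptually cleaner and makes the sign/integrality structure explicit, while the paper's is self-contained within the lemma and avoids the slightly roundabout detour through $S_n$. The one small inefficiency in your argument is that detour: you only need the linear-form claim for homomorphisms $\ben^n\to\bez$, which is exactly what the inductive computation inside the proof of Corollary~\ref{linear} establishes, so wrapping $f$ into an $S_0$-independent $\varphi:S_n\to\ben$ and then unwrapping it is legitimate but unnecessary packaging. Also note the dependency order is safe: Corollary~\ref{linear} precedes Lemma~\ref{surj0} and does not rely on it, so there is no circularity.
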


\begin{proof} We begin by showing that $f(\,\widehat{1}\,)=1$ where $\widehat{1}=(1,1,\ldots,1)$. 
Since $f$ is surjective, it suffices to show that $f(\vec x)\geq f(\,\widehat{1}\,)$ for each 
$\vec x =(x_1,x_2,\ldots,x_n)\in \ben^n$. For each $r\in \ben$ we have that 
$$r\vec x=(r-1)\widehat{1} +\big(1+ r(x_1-1),1+r(x_2-1),\ldots,1+r(x_n-1)\big)\,.$$
It follows that $rf(\vec x)=f(r\vec x)> f\big((r-1)\widehat{1}\,\big)=(r-1)f(\,\widehat{1}\,)$ or 
equivalently that $r\big(f(\vec x)-f(\widehat {1})\big)>-f(\,\widehat{1}\,)$.
As $r$ is arbitrary we deduce that $f(\vec x)- f(\,\widehat{1}\,)\geq 0$ as claimed.

For each $j\in\nhat{n}$ let $\vec z^{\,[j]}=(z_1^{[j]},z_2^{[j]},\ldots ,z_n^{[j]})\in \ben^n$ be as in (\ref{eq1}). 
As $\sum_{j=1}^n \vec z^{\,[j]}=(n+1)\widehat{1}$ we have
$\sum_{j=1}^nf(\vec z^{\,[j]})=f\big((n+1)\widehat{1})=n+1$.
It follows that there exists a unique $k\in\nhat{n}$ such that $f(\vec z^{\,[k]})=2$ and $f(\vec z^{\,[j]})=1$ for all $j\neq k$.
Without loss of generality, we may assume that $f(\vec z^{\,[1]})=2$ and $f(\vec z^{\,[j]})=1$ for all $j\in\{2,3,\ldots n\}$.

Let $\vec x=(x_1,x_2,\ldots,x_n)\in \ben^n$. We will show that $f(\vec x)=x_1.$
We first note that \[\textstyle (n+1)\vec x=\sum_{i=1}^n(nx_i-\sum_{\stackrel{j=1}{j\neq i}}^n x_j)\vec z^{\,[i]}.\] 
Therefore $$\textstyle(n+1)f(\vec x)=(nx_1-\sum_{j=2}^nx_j)\cdot 2+\sum_{i=2}^n(nx_i-\sum_{\stackrel{j=1}{j\neq i}}^nx_j)\cdot 1$$
and thus $(n+1)f(\vec x)=(n+1)x_1$.	
\end{proof}

\begin{corollary}\label{surj2} Let $n,m\in \ben$. For $i\in\nhat{n}$ and $j\in\nhat{m}$ let $\pi_i:\ben^n\to\ben$ 
and $\pi_j':\ben^m\to\ben$ denote the projections onto the $i$'th and $j$'th coordinates respectively.
Assume that $f:\ben^n\to\ben^m$ is a surjective homomorphism. 
For $i\in\nhat{m}$, let $f_i=\pi_i'\circ f$. 
Then there is an injection
$\sigma:\nhat{m}\to\nhat{n}$ such that for each $i\in\nhat{m}$, $f_i=\pi_{\sigma(i)}$.
In particular $m\leq n$.\end{corollary}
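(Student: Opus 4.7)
The plan is to reduce Corollary~\ref{surj2} directly to Lemma~\ref{surj0} coordinate by coordinate, then use surjectivity of $f$ to force $\sigma$ to be an injection.

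First I would observe that each $f_i=\pi_i'\circ f:\ben^n\to\ben$ is a homomorphism (composition of homomorphisms) and is surjective: if $t\in\ben$, pick $\vec y\in\ben^m$ with $y_i=t$ (using that $f$ is surjective onto $\ben^m$, so in particular the $i$-th coordinate takes every value in $\ben$), and then choose $\vec x\in\ben^n$ with $f(\vec x)=\vec y$, giving $f_i(\vec x)=t$. Applying Lemma~\ref{surj0} to each $f_i$ then produces, for every $i\in\nhat{m}$, an index $\sigma(i)\in\nhat{n}$ with $f_i=\pi_{\sigma(i)}$. This defines the desired function $\sigma:\nhat{m}\to\nhat{n}$ and already gives the identity $f_i=\pi_{\sigma(i)}$ claimed in the statement.

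The only remaining point is injectivity of $\sigma$, and this is where the surjectivity hypothesis on $f$ re-enters. Suppose for contradiction that there exist $i\neq j$ in $\nhat{m}$ with $\sigma(i)=\sigma(j)=k$. Then for every $\vec x\in\ben^n$ one has $f_i(\vec x)=x_k=f_j(\vec x)$, so the image $f[\ben^n]$ is contained in the diagonal hyperplane $\{\vec y\in\ben^m:y_i=y_j\}$. This contradicts surjectivity of $f$, since, for instance, $(1,1,\ldots,1)+e_i\in\ben^m$ (with $e_i$ the $i$-th standard basis vector) lies outside that hyperplane. Hence $\sigma$ is injective, and the final assertion $m\leq n$ is then immediate from the existence of an injection from $\nhat{m}$ into $\nhat{n}$.

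I do not anticipate any real obstacle: the substance of the argument is already contained in Lemma~\ref{surj0}, which pins down each coordinate map $f_i$ as a projection, and the injectivity of $\sigma$ is a one-line consequence of surjectivity. The only mildly delicate point is checking that each $f_i$ is itself surjective, which is why I would state that verification explicitly at the outset rather than treating it as obvious.
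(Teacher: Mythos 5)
Your proof is correct and follows essentially the same route as the paper: apply Lemma~\ref{surj0} to each coordinate map $f_i=\pi_i'\circ f$ to obtain $\sigma$, then derive injectivity of $\sigma$ from surjectivity of $f$. The paper simply states these two steps more tersely, whereas you spell out the verification that each $f_i$ is surjective and the contradiction argument for injectivity; both are the intended justifications.
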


\begin{proof} By hypothesis each $f_i:  \ben^n \to \ben$ is a 
surjective homomorphism. 
Thus by Lemma~\ref{surj0}, there exists a mapping 
$\sigma :  \{1,2,\ldots ,m\}\to  \{1,2,\ldots ,n\}$  such that $f_i(\vec x)=\pi_{\sigma(i)}(\vec x)=
x_{\sigma(i)}$ for each $\vec x \in \ben^n$. But as $f$ is surjective, it follows that $\sigma$ is injective. \end{proof}

\begin{corollary}\label{surj3} Let $n,m \in \ben$. For each $i\in\nhat{m}$ let $\tau_i: S_n \to \ben$ be an 
$S_0$-independent homomorphism. If the mapping $w\mapsto \big(\tau_1(w),\tau_2(w),\ldots,\tau_m(w)\big)$ takes $S_n$ onto $\ben^m$,
then there exists an injection $\sigma: \{1,2,\ldots ,m\} \to \{1,2,\ldots ,n\}$ such that 
$\tau_i=\mu_{\sigma(i)}$ for each $i\in \{1,2,\ldots ,m\}$. In particular we must have $m\leq n$.
\end{corollary}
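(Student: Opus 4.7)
The plan is to reduce Corollary \ref{surj3} to Corollary \ref{surj2} by factoring each $\tau_i$ through the multiplicity map. Applying Theorem \ref{surj1} to each $S_0$-independent homomorphism $\tau_i : S_n \to \ben$ (with $T = \ben$, which is cancellative and commutative), we obtain a homomorphism $f_i : \ben^n \to \ben$ such that $\tau_i(w) = f_i\big(\mu_1(w),\mu_2(w),\ldots,\mu_n(w)\big)$ for every $w \in S_n$.

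Define $\mu : S_n \to \ben^n$ by $\mu(w) = \big(\mu_1(w),\mu_2(w),\ldots,\mu_n(w)\big)$ and $f : \ben^n \to \ben^m$ by $f(\vec x) = \big(f_1(\vec x),f_2(\vec x),\ldots,f_m(\vec x)\big)$. Then $f$ is a homomorphism and the given map $w \mapsto \big(\tau_1(w),\ldots,\tau_m(w)\big)$ equals $f \circ \mu$. Note that $\mu$ is surjective onto $\ben^n$: given $(x_1,\ldots,x_n)\in \ben^n$, take $w = v_1^{x_1}v_2^{x_2}\cdots v_n^{x_n}$, so that $\mu(w) = (x_1,\ldots,x_n)$. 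Since by hypothesis $f \circ \mu$ maps $S_n$ onto $\ben^m$, it follows that $f$ maps $\ben^n$ onto $\ben^m$.

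Now Corollary \ref{surj2} applies to the surjective homomorphism $f : \ben^n \to \ben^m$: there is an injection $\sigma : \nhat{m} \to \nhat{n}$ such that $f_i(\vec x) = x_{\sigma(i)}$ for every $\vec x \in \ben^n$ and every $i\in\nhat{m}$. Consequently, for each $w \in S_n$ and each $i\in\nhat{m}$,
\[
\tau_i(w) = f_i\big(\mu_1(w),\mu_2(w),\ldots,\mu_n(w)\big) = \mu_{\sigma(i)}(w)\,,
\]
so $\tau_i = \mu_{\sigma(i)}$. The inequality $m \le n$ follows from the injectivity of $\sigma$.

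There is no real obstacle here; the entire content of the corollary has already been packaged into Theorem \ref{surj1} (which linearizes each $\tau_i$ via the $\mu_j$) and Corollary \ref{surj2} (which classifies surjective homomorphisms $\ben^n \to \ben^m$). The only point requiring a small verification is that surjectivity of $f \circ \mu$ forces surjectivity of $f$, which follows immediately once one observes that $\mu$ itself is surjective.
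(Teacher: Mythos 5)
Your proof is correct and follows essentially the same route as the paper: apply Theorem \ref{surj1} to linearize each $\tau_i$ into $f_i$, show the resulting $f:\ben^n\to\ben^m$ is surjective by lifting through the word $v_1^{x_1}\cdots v_n^{x_n}$, and then invoke Corollary \ref{surj2}. The only cosmetic difference is that you phrase the surjectivity step as ``$\mu$ is onto, so $f\circ\mu$ onto implies $f$ onto,'' whereas the paper directly picks a preimage $w$ of a given $\vec y$ and reads off the multiplicities; these are the same observation.
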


\begin{proof} By Theorem \ref{surj1}, for each $i\in\nhat{m}$, pick a homomorphism
$f_i:\ben^n\to\ben$ such that $\tau_i(w)=f_i\big(\mu_1(w),\mu_2(w),\ldots,\mu_n(w)\big)$
for each $w\in S_n$.  Define $f:\ben^n\to\ben^m$ by $f(\vec x)=\big(f_1(\vec x),f_2(\vec x),\ldots,f_m(\vec x)\big)$.
We claim that $f$ is surjective, so let $\vec y\in \ben^m$ be given and pick $w\in S_n$ such that
$\big(\tau_1(w),\tau_2(w),\ldots,\tau_m(w)\big)=\vec y$. For $j\in\nhat{n}$, let
$x_j=|w|_{v_j}$. Then $f(\vec x)=\vec y$.  

By Corollary \ref{surj2}, pick an injection
$\sigma:\nhat{m}\to\nhat{n}$ such that for each $i\in\nhat{m}$, $f_i=\pi_{\sigma(i)}$.
Let $w\in S_n$ be given and let $\vec x=(|w|_{v_1},|w|_{v_2},\ldots,|w|_{v_n})$,
Then for $i\in \nhat{m}$, $\tau_i(w)=f_i(\vec x)=x_{\sigma(i)}=|w|_{v_{\sigma(i)}}$.
\end{proof}

\section{Compact subsemigroups of $(\beta\ben)^k$}\label{seccptsemi}

\begin{theorem}\label{P} Let $n\in \ben$, let $C$ be a compact right topological semigroup,
let $\phi:S_n\to C$ be an $S_0$-independent homomorphism for which $\phi[S_n]$
is contained in the topological center of $C$, denote also by $\phi$ the continuous extension
from $\beta S$ to $C$, and let $F$ be a finite nonempty set of $S_0$-preserving
homomorphisms from $S_n$ into $S_0$. Let 
$$\begin{array}{rl}P=\big\{p\in\phi[\beta S_n]:
&\hbox{for every neighborhood }U\hbox{ of }p\\
&\hbox{and every piecewise syndetic subset }D\hbox{ of }S_0\\
&(\exists w\in S_n)\big(\phi(w)\in U\hbox{ and }(\forall \nu\in F)(\nu(w)\in D)\big)\big\}\,.
\end{array}$$
Then $P$ is a compact subsemigroup of $C$ containing all the idempotents 
of $ \phi[\beta S_n]$. \end{theorem}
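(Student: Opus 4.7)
The plan is to check three things about $P$: that it is closed (hence compact, as $C$ is compact), that it contains every idempotent of $\phi[\beta S_n]$, and that it is closed under the multiplication of $C$. The first is a formality: writing $E_D=\bigcap_{\nu\in F}\nu^{-1}[D]$ for each piecewise syndetic $D\subseteq S_0$, the defining clause of $P$ amounts to $p\in\overline{\phi[E_D]}$ for every such $D$, so
\[P=\phi[\beta S_n]\cap\bigcap\{\overline{\phi[E_D]}:D\hbox{ is piecewise syndetic in }S_0\}\]
is closed in $C$. The idempotents are handled directly by Theorem~\ref{noncom}: given an idempotent $p\in\phi[\beta S_n]$, take $S=S_0$ and $T=S_n$, extend each $\nu\in F$ to be the identity on $S_0$ so that $F$ becomes a finite set of homomorphisms from $S\cup T$ to $S$ fixing $S$ pointwise, and Theorem~\ref{noncom} gives, for each neighborhood $U$ of $p$ and each piecewise syndetic $D\subseteq S_0$, some $w\in S_n$ with $\phi(w)\in U$ and $\nu(w)\in D$ for every $\nu\in F$, which is exactly the condition $p\in P$.

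The work is in showing that $P$ is a subsemigroup. Given $p,q\in P$, a neighborhood $U$ of $pq$, and a piecewise syndetic $D\subseteq S_0$, I need to find $w\in S_n$ with $\phi(w)\in U$ and $\nu(w)\in D$ for every $\nu\in F$. The main obstacle is that the naive decomposition $w=w_1w_2$, with $w_1$ extracted from $p$ and $w_2$ from $q$, forces $\nu(w_2)\in\nu(w_1)^{-1}D$, and in a free semigroup a translate $t^{-1}D$ of a piecewise syndetic set need not itself be piecewise syndetic, so there is no obvious piecewise syndetic set in which to locate $\nu(w_2)$.

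To get around this I would first invoke \cite[Theorem 4.43]{HS} to pick $s\in S_0$ with $s^{-1}D$ central, then select a minimal idempotent $r\in\beta S_0$ with $s^{-1}D\in r$ and set $E=(s^{-1}D)^\star(r)$, which lies in $r$ and satisfies $x^{-1}E\in r$ for every $x\in E$ by \cite[Lemma 4.14]{HS}. Using that $\rho_q$ is continuous one picks a neighborhood $V_1$ of $p$ with $V_1q\subseteq U$; applying the hypothesis $p\in P$ to $V_1$ and to the piecewise syndetic set $E$ produces $w_1\in S_n$ with $\phi(w_1)\in V_1$ and $\nu(w_1)\in E$ for every $\nu\in F$. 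Because $\phi(w_1)$ belongs to the topological center of $C$, $\lambda_{\phi(w_1)}$ is continuous, so one picks a neighborhood $W$ of $q$ with $\phi(w_1)W\subseteq U$. Since each $\nu(w_1)\in E$, the set $D'=\bigcap_{\nu\in F}\nu(w_1)^{-1}E$ is a finite intersection of members of $r$; in particular $D'\in r$, and since $r\in K(\beta S_0)$ this makes $D'$ piecewise syndetic. Applying $q\in P$ to $W$ and $D'$ now delivers $w_2\in S_n$ with $\phi(w_2)\in W$ and $\nu(w_2)\in D'$ for every $\nu\in F$. Finally $w=sw_1w_2\in S_n$, and using $S_0$-independence of $\phi$ and $S_0$-preservation of each $\nu$ one has $\phi(w)=\phi(w_1)\phi(w_2)\in\phi(w_1)W\subseteq U$ together with $\nu(w)=s\nu(w_1)\nu(w_2)\in s\cdot E\subseteq s\cdot s^{-1}D\subseteq D$, establishing $pq\in P$.
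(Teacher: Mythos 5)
Your proof is correct and follows essentially the same route as the paper's: use \cite[Theorem 4.43]{HS} to pass from $D$ to a central set $s^{-1}D$, fix a minimal idempotent $r$ with $(s^{-1}D)^\star\in r$, extract $w_1$ from $p$ landing in $(s^{-1}D)^\star$, form the $r$-large intersection $\bigcap_{\nu\in F}\nu(w_1)^{-1}(s^{-1}D)^\star$, extract $w_2$ from $q$, and combine as $sw_1w_2$ using $S_0$-independence of $\phi$ and $S_0$-preservation of each $\nu$. The only cosmetic differences are that you spell out the closedness of $P$ as an intersection of closed sets and invoke Theorem~\ref{noncom} directly rather than via Corollary~\ref{cornoncom}.
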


\begin{proof} It is clear that $P$ is compact. 
By Corollary \ref{cornoncom}, $P$ contains all the idempotents in $ \phi[\beta S_n]$.  
To see that $P$ is a subsemigroup of $C$,  let $p,q\in P$. Let
$U$ be an open neighborhood of $pq$ and let $D$ be a piecewise syndetic 
subset of $S_0$. By \cite[Theorem 4.43]{HS} pick $s\in S_0$ such that
$s^{-1}D$ is central and pick a minimal idempotent $r$ in $\beta S_0$ such that
$s^{-1}D\in r$. Pick a neighborhood $V$ of $p$ such that $\rho_q[V]\subseteq U$.
Since $(s^{-1}D)^\star\in r$, it is piecewise syndetic so pick
$w\in S_n$ such that $\phi(w)\in V$ and $\nu(w)\in(s^{-1}D)^\star$ for
each $\nu\in F$.  

Then $\phi(w)q\in U$ and $\phi(w)$ is in the topological center of $C$
so pick a neighborhood $Q$ of $q$ such that $\lambda_{\phi(w)}[Q]\subseteq U$.
For each $\nu\in F$, $\nu(w)^{-1}(s^{-1}D)^\star\in r$.
Let $E=\bigcap_{\nu\in F}\nu(w)^{-1}(s^{-1}D)^\star$. Then
$E\in r$ so $E$ is piecewise syndetic in $S_0$. Pick
$u\in S_n$ such that $\phi(u)\in Q$ and $\nu(u)\in E$ for each
$\nu\in F$. Then $\phi(swu)=\phi(w)\phi(u)\in U$ and for
each $\nu\in F$, $\nu(swu)=s\nu(w)\nu(u)\in D$.\end{proof}

In the next results we focus on $(\beta\ben)^k$ and $S_0$-independent homomorphisms
from $S_n$ onto $\ben^m$, so by Corollary \ref{surj3} we may assume that we have $m\leq n$
and are dealing with $S_0$-independent homomorphisms $\tau_i$  from $S_n$ to $\ben$ defined
by $\tau_i(w)=|w|_{v_{\sigma(i)}}$ for some injection
$\sigma:\nhat{m}\to\nhat{n}$.  

\begin{definition}\label{defPM} Let $k,m,n\in\ben$ with $m\leq n$, let
$M$ be a $k\times m$ matrix with entries from $\beq$, let
$F$ be a finite nonempty set of $S_0$-preserving homomorphisms from
$S_n$ to $S_0$, and let
$\sigma$ be an injection from $\nhat{m}$ to $\nhat{n}$.
$$\begin{array}{rl} P_{M\!,F\!,\,\sigma}=\{\vec p\in\bigtimes_{i=1}^k\beta\ben:
&\hbox{whenever }D\hbox{ is a piecewise syndetic subset of } S_0\\
&\hbox{and for all }i\in\nhat{k}\,,\,B_i\in p_i\hbox{, there exists}\\
&w\in S_n \hbox{ such that }(\forall\nu\in F)(\nu(w)\in D)\hbox{ and}\\
&M\left(\begin{array}{c}\mu_{\sigma(1)}(w)\\ \vdots\\
\mu_{\sigma(m)}(w)\end{array}\right)\in\bigtimes_{i=1}^k B_i\}\end{array}$$
\end{definition}

Recall that for $\vec x\in\bea^n$ we have defined the $S_0$-preserving homomorphism
$h_{\vec x}:S_n\to S_0$ by $h_{\vec x}(w)=w(\vec x)$.  We are  particularly
interested in the set $\{h_{\vec x}:\vec x\in\bea^n\}$ because of
the relationship with the Hales-Jewett Theorem. We see now that
if $F=\{h_{\vec x}:\vec x\in\A^n\}$, then $P_{M\!,F\!,\,\sigma}$
does not depend on $\sigma$. We keep $\sigma$ in the notation
because there are $S_0$-preserving homomorphisms which are not
of the form $h_{\vec x}$.

\begin{theorem}\label{MFsame} Let $k,m,n\in\ben$ with $m\leq n$, let
$M$ be a $k\times m$ matrix with entries from $\beq$, let
$F=\{h_{\vec x}:\vec x\in \A^n\}$, and let
$\sigma$ and $\eta$ be injections from $\nhat{m}$ to $\nhat{n}$.
Then $P_{M\!,F\!,\,\sigma}=P_{M\!,F\!,\,\eta}$. \end{theorem}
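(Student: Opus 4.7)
The plan is to exploit the fact that when $F$ consists of every substitution homomorphism $h_{\vec x}$ with $\vec x\in\A^n$, the actual labels on the variables of a word $w\in S_n$ are nominal: renaming the variables by a permutation $\pi$ of $\nhat n$ does not change the set $\{h_{\vec x}(w):\vec x\in\A^n\}$, it only permutes the variable-count vector $\big(\mu_1(w),\ldots,\mu_n(w)\big)$ accordingly. Thus I expect the conclusion to follow by transporting a witness for $\sigma$ to a witness for $\eta$ via a suitable permutation.

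First I would observe that since $\sigma,\eta:\nhat m\to\nhat n$ are injections, their images have the same cardinality, so their complements in $\nhat n$ do as well. Hence the partial assignment $\sigma(i)\mapsto \eta(i)$ (for $i\in\nhat m$) extends to a permutation $\pi$ of $\nhat n$ with $\pi\circ\sigma=\eta$.

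Second, for any $w\in S_n$ let $\widetilde w$ denote the word obtained from $w$ by replacing every occurrence of $v_j$ by $v_{\pi(j)}$. Clearly $\widetilde w\in S_n$. Two identities are the heart of the argument:
\begin{itemize}
\item[(i)] $\mu_{\pi(j)}(\widetilde w)=\mu_j(w)$ for each $j\in\nhat n$, hence $\mu_{\eta(i)}(\widetilde w)=\mu_{\pi(\sigma(i))}(\widetilde w)=\mu_{\sigma(i)}(w)$ for each $i\in\nhat m$. Consequently the column vector $(\mu_{\eta(1)}(\widetilde w),\ldots,\mu_{\eta(m)}(\widetilde w))^t$ coincides with $(\mu_{\sigma(1)}(w),\ldots,\mu_{\sigma(m)}(w))^t$, and so $M$ applied to either yields the same element of $\bigtimes_{i=1}^k\ben$.
\item[(ii)] For each $\vec x\in\A^n$, $h_{\vec x}(\widetilde w)=h_{\vec y}(w)$, where $\vec y=(x_{\pi(1)},\ldots,x_{\pi(n)})$. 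Since $\vec x\mapsto \vec y$ is a bijection of $\A^n$, we obtain the set equality $\{h_{\vec x}(\widetilde w):\vec x\in\A^n\}=\{h_{\vec y}(w):\vec y\in\A^n\}$.
\end{itemize}

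Third, to show $P_{M\!,F\!,\,\sigma}\subseteq P_{M\!,F\!,\,\eta}$, take $\vec p\in P_{M\!,F\!,\,\sigma}$, a piecewise syndetic $D\subseteq S_0$, and $B_i\in p_i$ for $i\in\nhat k$. Pick $w\in S_n$ witnessing membership of $\vec p$ in $P_{M\!,F\!,\,\sigma}$ for this data. Then by (i) and (ii), $\widetilde w$ witnesses membership of $\vec p$ in $P_{M\!,F\!,\,\eta}$. The reverse inclusion is obtained symmetrically, replacing $\pi$ by $\pi^{-1}$ (so that now $\pi^{-1}\circ\eta=\sigma$). I do not foresee a genuine obstacle here; the only care needed is keeping the direction of the index-permutation consistent in (i) versus (ii), so that the variable-count vector is permuted in the way that matches $\pi\circ\sigma=\eta$.
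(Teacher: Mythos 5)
Your proof is correct and follows essentially the same route as the paper: extend the partial map $\sigma(i)\mapsto\eta(i)$ to a permutation $\pi$ of $\nhat n$, rename the variables in the witness $w$ accordingly, and check that the variable-count vector and the set of instances transform in the compatible way. The paper's $\delta$ and $w'=w(v_{\delta(1)}\cdots v_{\delta(n)})$ are your $\pi$ and $\widetilde w$.
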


\begin{proof} It suffices to show that $P_{M\!,F\!,\,\sigma}\subseteq P_{M\!,F\!,\,\eta}$,
so let $\vec p\in P_{M\!,F\!,\,\sigma}$. To see that
$\vec p\in P_{M\!,F\!,\,\eta}$, let $D$ be a piecewise syndetic subset of $S_0$ and for
$i\in\nhat{k}$, let $B_i\in p_i$. Pick $w\in S_n$ such that for all $\vec x\in \A^n$, 
$h_{\vec x}(w)\in D$ and $M\left(\begin{array}{c}\mu_{\sigma(1)}(w)\\ \vdots\\
\mu_{\sigma(m)}(w)\end{array}\right)\in\bigtimes_{i=1}^k B_i$.

Define $\delta:\{\sigma(1),\sigma(2),\ldots,\sigma(m)\}\to\nhat{n}$ by,
for $i\in\nhat{m}$, $\delta\big(\sigma(i)\big)=\eta(i)$ and extend
$\delta$ to a permutation of $\nhat{n}$.  Define $w'\in S_n$ by
$w'=w(v_{\delta(1)}v_{\delta(2)}\cdots v_{\delta(n)})$.  Then for
$j\in\nhat{n}$, $\mu_j(w)=\mu_{\delta(j)}(w')$ so for
$i\in\nhat{m}$, $\mu_{\sigma(i)}(w)=\mu_{\delta(\sigma(i))}(w')=\mu_{\eta(i)}(w')$
and thus $$M\left(\begin{array}{c}\mu_{\eta(1)}(w')\\ \vdots\\
\mu_{\eta(m)}(w')\end{array}\right)=M\left(\begin{array}{c}\mu_{\sigma(1)}(w)\\ \vdots\\
\mu_{\sigma(m)}(w)\end{array}\right)\in\bigtimes_{i=1}^k B_i\,.$$

Now let $\vec x\in \A^n$ be given and define $\vec z\in \A^n$ by, 
for $i\in\nhat{n}$, $z_i=x_{\delta(i)}$. Then $h_{\vec x}(w')=h_{\vec z}(w)\in D$.
\end{proof}

If one lets $C=(\beta\ben)^k$ and defines $\phi$ on $S_n$ by
$\phi(w)=M\left(\begin{array}{c}\mu_{\sigma(1)}(w)\\ \vdots\\
\mu_{\sigma(m)}(w)\end{array}\right)$, one may not be able to invoke
Theorem \ref{P} to conclude that $P_{M\!,F\!,\,\sigma}$ is a semigroup
because $\phi$ may not take $S_n$ to $C$. Consider, for example,
$M=\left(\begin{array}{c} 1\\ -1\end{array}\right)$.

\begin{lemma}\label{PMsemi} Let $k,m,n\in\ben$ with $m\leq n$, let
$M$ be a $k\times m$ matrix with entries from $\beq$, and let
$F$ be a finite nonempty set of $S_0$-preserving homomorphisms from
$S_n$ to $S_0$.  Let
$\sigma$ be an injection from $\nhat{m}$ to $\nhat{n}$.
If $P_{M\!,F\!,\,\sigma}\neq\emp$, then $P_{M\!,F\!,\,\sigma}$ is a compact subsemigroup of
$(\beta\ben)^k$.\end{lemma}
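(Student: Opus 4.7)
The plan is to mirror the proof of Theorem \ref{P}, but because the mapping $w \mapsto M\psi(w)$ with $\psi(w) = (\mu_{\sigma(1)}(w),\ldots,\mu_{\sigma(m)}(w))^t$ may take values outside $\mathbb{N}^k$ (entries of $M$ can be negative), I cannot apply Theorem \ref{P} directly with this as $\phi$. Instead I work from the definition. Compactness is immediate: $P_{M\!,F\!,\,\sigma}$ is closed in $(\beta\mathbb{N})^k$, since if $\vec p\notin P_{M\!,F\!,\,\sigma}$, witnesses $D$ piecewise syndetic and $B_i\in p_i$ of failure persist on the basic neighborhood $\bigtimes_{i=1}^k\overline{B_i}$ of $\vec p$.

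For the semigroup property under coordinatewise addition, let $\vec p,\vec q\in P_{M\!,F\!,\,\sigma}$, fix a piecewise syndetic $D\subseteq S_0$, and take $B_i\in p_i+q_i$ for $i\in\nhat{k}$, so that $A_i:=\{x\in\mathbb{N}:-x+B_i\in q_i\}\in p_i$. Using \cite[Theorem 4.43]{HS}, choose $s\in S_0$ with $s^{-1}D$ central, a minimal idempotent $r\in\beta S_0$ with $s^{-1}D\in r$, and set $(s^{-1}D)^\star=(s^{-1}D)^\star(r)\in r$, which is piecewise syndetic. Applying the assumption $\vec p\in P_{M\!,F\!,\,\sigma}$ to $(s^{-1}D)^\star$ and $A_1,\ldots,A_k$ yields $w_1\in S_n$ with $\nu(w_1)\in(s^{-1}D)^\star$ for all $\nu\in F$ and $\vec x:=M\psi(w_1)\in\bigtimes_{i=1}^k A_i$; in particular $-x_i+B_i\in q_i$ for each $i$. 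Next set $E=\bigcap_{\nu\in F}\nu(w_1)^{-1}(s^{-1}D)^\star$; since each $\nu(w_1)\in(s^{-1}D)^\star(r)$, each $\nu(w_1)^{-1}(s^{-1}D)^\star\in r$, so $E\in r$ and hence is piecewise syndetic. Apply $\vec q\in P_{M\!,F\!,\,\sigma}$ to $E$ and the sets $-x_i+B_i$ to get $w_2\in S_n$ with $\nu(w_2)\in E$ for every $\nu\in F$ and $M\psi(w_2)\in\bigtimes_{i=1}^k(-x_i+B_i)$.

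Finally I take $w=sw_1w_2\in S_n$. Since $s\in S_0$ contributes no variables and each $\mu_j$ is a homomorphism into $(\mathbb{N},+)$, one has $\psi(w)=\psi(w_1)+\psi(w_2)$, whence $M\psi(w)=\vec x+M\psi(w_2)\in\bigtimes_{i=1}^k B_i$. For each $\nu\in F$, $S_0$-preservation gives $\nu(w)=s\,\nu(w_1)\,\nu(w_2)$, and $\nu(w_1)\nu(w_2)\in(s^{-1}D)^\star\subseteq s^{-1}D$ by the definition of $E$, so $\nu(w)\in D$. This shows $\vec p+\vec q\in P_{M\!,F\!,\,\sigma}$. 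The main obstacle, inherited from Theorem \ref{P}, is the bookkeeping that keeps the combinatorial condition $\nu(w)\in D$ aligned with the algebraic condition $M\psi(w)\in\bigtimes B_i$ when the witness $w$ is built by concatenation; the starred set $(s^{-1}D)^\star(r)$ of a minimal idempotent is precisely the device that makes the two applications of the hypothesis interlock.
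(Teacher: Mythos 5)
Your proof is correct and follows essentially the same line as the paper's own proof: compactness via closedness in $(\beta\ben)^k$, and the semigroup property via the same decomposition $B_i\in p_i+q_i$, the same appeal to \cite[Theorem 4.43]{HS} to obtain $s$ with $s^{-1}D$ central and the starred set $(s^{-1}D)^\star(r)$, the same two-stage choice of witnesses (your $w_1,w_2$ are the paper's $w,u$), and the same final word $sw_1w_2$. The only differences are in notation.
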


\begin{proof} Assume that $P_{M\!,F\!,\,\sigma}\neq\emp$. We begin by showing that $P_{M\!,F\!,\,\sigma}$ is compact. 
Let $\vec p=(p_1,p_2,\ldots,p_k)\in(\beta\ben)^k\setminus P_{M\!,F\!,\,\sigma}$ and pick
piecewise syndetic $D\subseteq S_0$ and $B_i\in p_i$ for each $i\in\nhat{k}$ such that
there is no $w\in S_n$ with $\nu(w)\in D$ for all $\nu\in F$ and 
$M\left(\begin{array}{c}\mu_{\sigma(1)}(w)\\ \vdots\\
\mu_{\sigma(m)}(w)\end{array}\right)\in\bigtimes_{i=1}^k B_i$;
then $\bigtimes_{i=1}^k\overline{B_i}$ is a neighborhood of $\vec p$
which misses $P_{M\!,F\!,\,\sigma}$ so $P_{M\!,F\!,\,\sigma}$ is closed and hence compact.

To see that $P_{M\!,F\!,\,\sigma}$ is a semigroup, let $\vec p,\vec q\in P_{M\!,F\!,\,\sigma}$. 
Let $D$ be a piecewise syndetic subset of $S_0$ and for
each  $i\in\nhat{k}$, let $B_i\in p_i+q_i$.
By \cite[Theorem 4.43]{HS}, pick $s\in S_0$ such that
$s^{-1}D$ is central in $S_0$ and pick a minimal idempotent $r\in\beta S_0$ such that
$s^{-1}D\in r$. For each $i\in\nhat{k}$, let
$C_i=\{x\in\ben:-x+B_i\in q_i\}$ and note that $C_i\in p_i$. 
Then as  $(s^{-1}D)^\star\in r$, we deduce that $(s^{-1}D)^\star$ is central and hence in particular piecewise syndetic.
Since $\vec p\in P_{M\!,F\!,\,\sigma}$, pick $w\in S_n$ such that $\nu(w)\in (s^{-1}D)^\star$ for all $\nu\in F$
and $M\left(\begin{array}{c}\mu_{\sigma(1)}(w)\\ \vdots\\
\mu_{\sigma(m)}(w)\end{array}\right)=\vec z\in\bigtimes_{i=1}^kC_i$.
Let $G=\bigcap_{\nu\in F}\nu(w)^{-1}(s^{-1}D)^\star$.
Then $G\in r$ so $G$ is piecewise syndetic in $S_0$. Also $\vec q\in P_{M\!,F\!,\,\sigma}$
and for each $i\in\nhat{k}$, $-z_i+B_i\in q_i$  
so pick $u\in S_n$ such that $\nu(u)\in G$ for each $\nu\in F$
and $M\left(\begin{array}{c}\mu_{\sigma(1)}(u)\\ \vdots\\
\mu_{\sigma(m)}(u)\end{array}\right)=\vec y\in\bigtimes_{i=1}^k(-z_i+B_i)$.

Given $\nu\in F$, $\nu(wu)=\nu(w)\nu(u)\in s^{-1}D$ so $\nu(swu)=s\nu(wu)\in D$.
Finally
$M\left(\begin{array}{c}\mu_{\sigma(1)}(swu)\\ \vdots\\
\mu_{\sigma(m)}(swu)\end{array}\right)=M\left(\begin{array}{c}\mu_{\sigma(1)}(wu)\\ \vdots\\
\mu_{\sigma(m)}(wu)\end{array}\right)=M\left(\begin{array}{c}\mu_{\sigma(1)}(w)+\mu_{\sigma(1)}(u)\\ \vdots\\
\mu_{\sigma(m)}(w)+\mu_{\sigma(m)}(u)\end{array}\right)=\vec z+\vec y\in\bigtimes_{i=1}^kB_i$.
\end{proof}

\begin{theorem}\label{PMtriag} Let $m,n\in\ben$ with $m\leq n$. Let
$M$ be an $m\times m$ lower triangular matrix with rational entries. Assume
that the entries on the diagonal are positive and the entries
below the diagonal are negative or zero. Let $F$ be a finite
nonempty set of $S_0$-preserving homomorphisms from $S_n$ to $S_0$. Let $\sigma$ be an injection
from $\nhat{m}$ to $\nhat{n}$. 
Then $P_{M\!,F\!,\,\sigma}$ is a compact subsemigroup of $(\beta\ben)^m$ containing
the idempotents of $(\beta\ben)^m$.\end{theorem}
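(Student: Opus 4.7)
The plan is to reduce this theorem almost immediately to Corollary \ref{lowertriag} and Lemma \ref{PMsemi}. By Lemma \ref{PMsemi}, once we know $P_{M\!,F\!,\,\sigma}$ is nonempty it is automatically a compact subsemigroup of $(\beta\ben)^m$. So the entire task is to verify that every idempotent $\vec p = (p_1,p_2,\ldots,p_m)$ of $(\beta\ben)^m$ lies in $P_{M\!,F\!,\,\sigma}$.

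To do this I will apply Corollary \ref{lowertriag} with $\tau_i = \mu_{\sigma(i)}$ for each $i\in\nhat{m}$. In the notation of that corollary we then have $\alpha_{i,\sigma(i)}=1$ and $\alpha_{i,j}=0$ for $j\neq\sigma(i)$. The key observation is that the choice $t(i)=\sigma(i)$ satisfies the three required conditions trivially: $\alpha_{i,t(i)}=1>0$, and since $\sigma$ is injective, whenever $l\neq i$ we have $\sigma(l)\neq\sigma(i)$, so $\alpha_{i,t(l)}=\alpha_{i,\sigma(l)}=0$. In particular the conditions ``$\alpha_{i,t(l)}=0$ for $l>i$'' and ``$\alpha_{i,t(l)}\leq 0$ for $l<i$'' both hold.

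With these choices Corollary \ref{lowertriag} applies directly to the given $M$, $\vec p$, $F$, and any piecewise syndetic $D\subseteq S_0$: for any $B_i\in p_i$ it produces $w\in S_n$ with $\nu(w)\in D$ for every $\nu\in F$ and
\[
M\left(\begin{array}{c}\mu_{\sigma(1)}(w)\\ \vdots\\ \mu_{\sigma(m)}(w)\end{array}\right)=M\left(\begin{array}{c}\tau_1(w)\\ \vdots\\ \tau_m(w)\end{array}\right)\in\bigtimes_{i=1}^m B_i\,.
\]
This is exactly the defining condition of $P_{M\!,F\!,\,\sigma}$, so $\vec p\in P_{M\!,F\!,\,\sigma}$.

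Thus every idempotent of $(\beta\ben)^m$ belongs to $P_{M\!,F\!,\,\sigma}$; since such idempotents exist (e.g.\ take $\vec p=(p,p,\ldots,p)$ for any idempotent $p\in\beta\ben$), $P_{M\!,F\!,\,\sigma}$ is nonempty, and Lemma \ref{PMsemi} finishes the argument. There is no real obstacle here beyond matching notation; the only thing to be careful about is confirming that the injectivity of $\sigma$ is precisely what makes the triangular hypothesis of Corollary \ref{lowertriag} vacuously true for the homomorphisms $\mu_{\sigma(i)}$.
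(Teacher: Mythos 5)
Your proof is correct and follows the same route as the paper's: invoke Corollary \ref{lowertriag} to see that $P_{M\!,F\!,\,\sigma}$ contains all idempotents of $(\beta\ben)^m$ (hence is nonempty), then apply Lemma \ref{PMsemi}. The only difference is that you explicitly verify that taking $\tau_i=\mu_{\sigma(i)}$ with $t(i)=\sigma(i)$ satisfies the hypotheses of Corollary \ref{lowertriag} via the injectivity of $\sigma$, a check the paper leaves implicit.
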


\begin{proof} Let $k=m$. By Corollary \ref{lowertriag}, $P_{M\!,F\!,\,\sigma}$ contains the idempotents
of $(\beta\ben)^k$ so in particular $P_{M\!,F\!,\,\sigma}\neq \emptyset$. The result now follows by Lemma \ref{PMsemi}.\end{proof}

\begin{theorem}\label{PMIPR} Let $k, m,n\in\ben$ with $m\leq n$. Let
$M$ be a $k\times m$ matrix with rational entries which is image
partition regular over $\ben$. Let $F$ be a finite
nonempty set of $S_0$-preserving homomorphisms from $S_n$ to $S_0$.
Let $\sigma$ be an injection
from $\nhat{m}$ to $\nhat{n}$.
Then $P_{M\!,F\!,\,\sigma}$ is a compact subsemigroup of $(\beta \ben)^k$ containing
$\{(p,p,\ldots,p)\in(\beta\ben)^k:p$ is a minimal idempotent of $\beta\ben\}$.
\end{theorem}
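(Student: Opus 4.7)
The plan is to reduce this to two results that are already available: Lemma \ref{PMsemi}, which handles the compact subsemigroup structure provided $P_{M\!,F\!,\,\sigma} \neq \emptyset$, and Corollary \ref{IPR}, which is essentially custom-built to deliver the diagonal minimal idempotents.

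First, I would invoke Lemma \ref{PMsemi}: once we know $P_{M\!,F\!,\,\sigma}$ is nonempty, it is automatically a compact subsemigroup of $(\beta\ben)^k$. So the whole content of the theorem reduces to showing that every $\widehat p = (p,p,\ldots,p)$ with $p$ a minimal idempotent of $\beta\ben$ lies in $P_{M\!,F\!,\,\sigma}$.

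To verify this, fix a minimal idempotent $p$ of $\beta\ben$, choose $B_i \in p$ for each $i \in \nhat{k}$, and let $D$ be a piecewise syndetic subset of $S_0$. Set $B = \bigcap_{i=1}^k B_i$, which still lies in $p$. Then Corollary \ref{IPR}, applied with this $B$, the injection $\sigma$, the homomorphisms $\tau_i = \mu_{\sigma(i)}$, and the given $F$ and $D$, produces $w \in S_n$ such that $\nu(w) \in D$ for every $\nu \in F$ and
\[
M\left(\begin{array}{c}\mu_{\sigma(1)}(w)\\ \vdots\\ \mu_{\sigma(m)}(w)\end{array}\right) \in \bigtimes_{i=1}^k B \subseteq \bigtimes_{i=1}^k B_i\,.
\]
This is precisely the condition defining membership of $\widehat p$ in $P_{M\!,F\!,\,\sigma}$.

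There isn't really a main obstacle here: the theorem is a packaging result combining Corollary \ref{IPR} (existence on the diagonal) with Lemma \ref{PMsemi} (passage from nonemptiness to semigroup). The only point requiring a moment's care is replacing the $k$ possibly distinct sets $B_i \in p$ by their common intersection before applying Corollary \ref{IPR}, which is legitimate because $p$ is an ultrafilter closed under finite intersections, and then noting that $B^k \subseteq \bigtimes_{i=1}^k B_i$.
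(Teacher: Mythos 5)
Your proposal is correct and follows essentially the same two-step route as the paper: Corollary \ref{IPR} supplies the diagonal minimal idempotents, and Lemma \ref{PMsemi} then upgrades nonemptiness to the compact subsemigroup conclusion. The only additional detail you spell out — replacing $B_1,\ldots,B_k\in p$ by their intersection before invoking Corollary \ref{IPR} — is the same small step the paper leaves implicit.
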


\begin{proof} By Corollary \ref{IPR}, $P_{M\!,F\!,\,\sigma}$ contains
$\{(p,p,\ldots,p)\in(\beta\ben)^k:p$ is a minimal idempotent of $\beta\ben\}$ so
Lemma \ref{PMsemi} applies.\end{proof}

If $M=\left(\begin{array}{cc} 1&1\\ 1&2\end{array}\right)$ and $F$ is a finite
nonempty set of $S_0$-preserving homomorphisms from $S_n$ to $S_0$, then
by Theorem \ref{PMIPR} we have that $P_{M\!,F\!,\,\sigma}$ contains $\{(p,p):p$ is a minimal idempotent of $\beta\ben\}$
but by Theorem \ref{centralnogood}, $P_{M\!,F\!,\,\sigma}$ does not contain $\{(p_1,p_2): p_1$ and $p_2$
are minimal idempotents of $\beta\ben\}$.

Given a finite coloring of a semigroup, at least one of the color classes must
be piecewise syndetic, so results concluding that piecewise syndetic sets have
a certain property guarantee the corresponding conclusion for finite colorings.
We see now a situation where the conclusions are equivalent -- a fact that has
interesting consequences for both versions.

\begin{theorem}\label{psequiv} Let $n\in\ben$, let $\tau$ be an $S_0$-independent
homomorphism from $S_n$ to $\ben$, and let $B\subseteq\ben$. The following
statements are equivalent.
\begin{itemize}
\item[(a)] Whenever $S_0$ is finitely colored, there exists $w\in S_n$
such that $\{w(\vec x):x\in\bea^n\}$ is monochromatic and $\tau(w)\in B$.
\item[(b)] Whenever $D$ is a piecewise syndetic subset of $S_0$, there
exists $w\in S_n$ such that $\{w(\vec x):\vec x\in\bea^n\}\subseteq D$ and
$\tau(w)\in B$.
\end{itemize}\end{theorem}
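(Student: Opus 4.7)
The direction (b) $\Rightarrow$ (a) is immediate: by a standard ultrafilter argument, every finite coloring of $S_0$ has a piecewise syndetic color class (take any $p \in K(\beta S_0)$; since $S_0 \in p$, some color class lies in $p$), and applying (b) to this class gives the required monochromatic $w$. For (a) $\Rightarrow$ (b), my plan is to build a nonempty closed two-sided ideal $T$ of $\beta S_0$ each of whose members witnesses (b). Since $K(\beta S_0)$ is the smallest two-sided ideal of $\beta S_0$, it will lie in $T$, and every piecewise syndetic $D$---being a member of some $p \in K(\beta S_0)$---will then satisfy (b) automatically.

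Concretely, let $E = \{w \in S_n : \tau(w) \in B\}$, let $\mathcal{F}$ be the upward-closed family of $A \subseteq S_0$ such that $\{w(\vec x) : \vec x \in \A^n\} \subseteq A$ for some $w \in E$, and set $T = \{p \in \beta S_0 : p \subseteq \mathcal{F}\}$. Closedness of $T$ is routine: if $p \notin T$, take a witnessing $A \in p$ with $A$ containing no $\{w(\vec x) : \vec x \in \A^n\}$ for $w \in E$; then $\overline{A}$ is a neighborhood of $p$ avoiding $T$. The main work lies in showing that $T$ is a two-sided ideal, and this rests entirely on the $S_0$-independence of $\tau$. Given $p \in T$ and $q \in \beta S_0$: for $A \in qp$, pick $t \in S_0$ with $t^{-1}A \in p$ and then $w \in E$ with $\{w(\vec x) : \vec x \in \A^n\} \subseteq t^{-1}A$; the concatenation $tw$ lies in $S_n$, satisfies $\tau(tw) = \tau(w) \in B$, and $\{(tw)(\vec x) : \vec x \in \A^n\} \subseteq A$, so $A \in \mathcal{F}$. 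For $A \in pq$, the set $\{s \in S_0 : s^{-1}A \in q\}$ lies in $p$ and hence contains some $\{w(\vec x) : \vec x \in \A^n\}$ with $w \in E$; picking $u$ in the finite intersection $\bigcap_{\vec x \in \A^n} w(\vec x)^{-1}A \in q$ yields $wu \in E$ with $\{(wu)(\vec x) : \vec x \in \A^n\} \subseteq A$.

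It remains to verify $T \neq \emptyset$, which is where (a) is used. I would set $\mathcal{G} = \{B \subseteq S_0 : S_0 \setminus B \notin \mathcal{F}\}$ and show that $\mathcal{G}$ has the finite intersection property. Indeed, if $B_1, \ldots, B_k \in \mathcal{G}$ had empty intersection, then $S_0 = \bigcup_{i} (S_0 \setminus B_i)$ refines to a finite partition; (a) applied to this partition produces a monochromatic $\{w(\vec x) : \vec x \in \A^n\}$ with $w \in E$ sitting inside some cell, which in turn is contained in some $S_0 \setminus B_j$, forcing $S_0 \setminus B_j \in \mathcal{F}$ and contradicting $B_j \in \mathcal{G}$. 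Any ultrafilter $p \supseteq \mathcal{G}$ then lies in $T$: otherwise some $A \in p$ would fail to be in $\mathcal{F}$, putting $S_0 \setminus A \in \mathcal{G} \subseteq p$, which is impossible. Thus $K(\beta S_0) \subseteq T$ and (b) follows. The main obstacle to keep track of is the ideal verification, which succeeds only because $S_0$-independence of $\tau$ permits prepending $t \in S_0$ or appending $u \in S_0$ to a word $w \in E$ without changing $\tau(w)$ or leaving $S_n$.
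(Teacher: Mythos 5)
Your proof is correct, and it takes a genuinely different route from the paper's. The paper proves (a)~$\Rightarrow$~(b) combinatorially: a compactness argument reduces (a) to a finite statement about words of a fixed length $m$ (for each number of colors $r$), and then the finite characterization of piecewise syndetic sets (the existence of a finite $G\subseteq S_0$ such that translates of $\bigcup_{t\in G}t^{-1}D$ cover arbitrarily large finite sets) is used to pull the finite-coloring conclusion onto $D$ by a judicious $r$-coloring of the length-$m$ words, with $r=|G|$. Your approach is instead algebraic: you define a two-sided ideal $T\subseteq\beta S_0$ consisting of ultrafilters whose every member contains a ``good'' set $\{w(\vec x):\vec x\in\A^n\}$ with $\tau(w)\in B$, establish its nonemptiness from (a) by a finite-intersection / dual-family argument, and then invoke the fact that $K(\beta S_0)$, being the smallest two-sided ideal, is contained in $T$, whence every $p\in K(\beta S_0)$ (and hence every piecewise syndetic $D$) yields a witness. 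Your ideal computation is essentially the one the authors use later to prove Theorem~\ref{Rnideal} (that $R_n$ is a two-sided ideal), augmented by the $S_0$-independence of $\tau$ to preserve $\tau(w)\in B$ under prepending or appending elements of $S_0$; the set $T$ you build is the natural refinement of $R_n$ by the constraint on $\tau$. Your route is more in keeping with the algebraic philosophy of the paper and unifies nicely with the proof of Theorem~\ref{Rnideal}; the paper's route is more elementary and self-contained, at the cost of some bookkeeping with the combinatorial definition of piecewise syndetic. One small point: you do not actually need $T$ to be closed for the final step, since $K(\beta S_0)$ is contained in every two-sided ideal, closed or not.
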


\begin{proof} It is trivial that (b) implies (a), so assume that (a) 
holds and let $D$ be a piecewise syndetic subset of $S_0$.  Note that for each $r\in\ben$, there is some $m\in\ben$
such that whenever the length $m$ words in $S_0$ are $r$-colored, there
is some $w\in S_n$ of length $m$ such that $\{w(\vec x):\vec x\in \bea^n\}$ is monochromatic and $\tau(w)\in B$.
(If there is a bad $r$-coloring $\varphi_m$ of the length $m$ words for each
$m$ then $\bigcup_{m=1}^\infty\varphi_m$ is a bad $r$-coloring of $S_0$.)

Since $D$ is piecewise syndetic, pick finite nonempty $G\subseteq S_0$ such that
for every finite nonempty subset $H$ of $S_0$ there exists $s\in S_0$ with
$Hs\subseteq\bigcup_{t\in G}t^{-1}D$. Let $r=|G|$ and pick $m\in\ben$
such that whenever the length $m$ words in $S_0$ are $r$-colored, there
is some $w\in S_n$ such that $\{w(\vec x):\vec x\in\bea^n\}$ is monochromatic and $\tau(w)\in B$.
Let $H$ be the set of length $m$ words in $S_0$ and pick
$s\in S_0$ such that $Hs\subseteq\bigcup_{t\in G}t^{-1}D$. For
$u\in H$ pick $\varphi(u)\in G$ such that $us\in \varphi(u)^{-1}G$.
Pick $w\in S_n$ of length $m$ and $t\in G$ such that 
for all $\vec x\in\bea^n$, $\varphi\big(w(\vec x)\big)=t$ and $\tau(w)\in B$.
Let $w'=tws$. Then for $\vec x\in\bea^n$, $w'(\vec x)=t\big(w(\vec x)\big)s\in D$
and $\tau(w')=\tau(w)\in B$.\end{proof}

If $n=1$, the following corollary yields the statement in the second
paragraph of the abstract.

\begin{corollary}\label{coloringsemigp} Let
$n\in\ben$, let $\tau$ be an $S_0$-independent  homomorphism from $S_n$ onto $\ben$,
and let $Q=\{p\in\beta\ben:$ whenever $S_0$ is finitely colored and
$B\in p$, there exists $w\in S_n$ such that $\{w(\vec x):\vec x\in\bea^n\}$
is monochromatic and $\tau(w)\in B\}$. Then $Q$ is a compact subsemigroup 
of $\beta\ben$ containing all of the idempotents.\end{corollary}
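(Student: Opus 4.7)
The plan is to derive this as a direct consequence of Theorem~\ref{P} combined with Theorem~\ref{psequiv}. I would apply Theorem~\ref{P} with the choices $C=\beta\ben$, $\phi=\tau$ (extended continuously to $\beta S_n\to\beta\ben$), and $F=\{h_{\vec x}:\vec x\in\bea^n\}$, where $h_{\vec x}(w)=w(\vec x)$. One checks the hypotheses: $\beta\ben$ is a compact right topological semigroup, $\tau[S_n]\subseteq\ben$ lies in the topological center of $\beta\ben$, $\tau$ is $S_0$-independent by assumption, and $F$ is a finite nonempty set of $S_0$-preserving homomorphisms since $\bea$ is finite.

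Next I would identify the set $P$ produced by Theorem~\ref{P} with the set $Q$. Since sets of the form $\overline{B}$ with $B\in p$ form a neighborhood base at $p$ in $\beta\ben$, the condition ``$\tau(w)\in U$ for every neighborhood $U$ of $p$'' becomes ``$\tau(w)\in B$ for every $B\in p$'', just by picking one $w$ per $B$. Moreover, $\nu(w)\in D$ for every $\nu\in F$ is exactly $\{w(\vec x):\vec x\in\bea^n\}\subseteq D$. Thus
$$P=\{p\in\tau[\beta S_n]:(\forall B\in p)(\forall D\text{ p.s.\ in }S_0)(\exists w\in S_n)(\{w(\vec x):\vec x\in\bea^n\}\subseteq D\text{ and }\tau(w)\in B)\}.$$
Since $\tau$ maps $S_n$ onto $\ben$, by \cite[Exercise~3.4.1]{HS} its continuous extension satisfies $\tau[\beta S_n]=\beta\ben$, so the restriction to $\tau[\beta S_n]$ is vacuous. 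By Theorem~\ref{psequiv}, applied for each fixed $B\in p$, the piecewise-syndetic version of the defining condition is equivalent to the finite-coloring version. Therefore $P=Q$.

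Theorem~\ref{P} then immediately gives that $Q=P$ is a compact subsemigroup of $\beta\ben$ containing all idempotents of $\tau[\beta S_n]=\beta\ben$, which is exactly the assertion of the corollary. There is no real obstacle: the only non-cosmetic step is invoking Theorem~\ref{psequiv} to pass between the coloring formulation used in the definition of $Q$ and the piecewise-syndetic formulation needed to match the conclusion of Theorem~\ref{P}, and this is handled pointwise in $B$.
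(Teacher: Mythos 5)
Your proof is correct, but it takes a genuinely different route from the paper's. The paper first applies Corollary~\ref{surj3} to deduce that any $S_0$-independent homomorphism $\tau$ from $S_n$ onto $\ben$ must equal $\mu_{\sigma(1)}$ for some $\sigma(1)\in\nhat{n}$; it then sets $k=m=1$, $M=(1)$, and appeals to Theorem~\ref{PMtriag} (which rests on Lemma~\ref{PMsemi} and Corollary~\ref{lowertriag}) to conclude that $P_{M,F,\sigma}$ is a compact subsemigroup containing all idempotents, finishing with Theorem~\ref{psequiv} to identify $Q$ with $P_{M,F,\sigma}$. You instead go directly through Theorem~\ref{P} with $C=\beta\ben$ and $\phi=\tau$, note that surjectivity of $\tau$ and \cite[Exercise 3.4.1]{HS} give $\tau[\beta S_n]=\beta\ben$ so that all idempotents of $\beta\ben$ lie in $P$, and then invoke Theorem~\ref{psequiv} for the identification $P=Q$. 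Your argument is leaner: it sidesteps the structural classification of $\tau$ in Corollary~\ref{surj3} and the entire $P_{M,F,\sigma}$ machinery, which the paper uses here mainly because Corollary~\ref{coloringsemigp} sits inside the broader Section~\ref{seccptsemi} framework built around those objects. Both routes are valid; the shared essential step is the translation between the finite-coloring formulation of $Q$ and the piecewise-syndetic formulation of $P$ via Theorem~\ref{psequiv}, applied pointwise in $B\in p$.
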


\begin{proof} Let $k=m=1$, let $M=(1)$, and let
$F=\{h_{\vec x}:\vec x\in\bea^n\}$. By Corollary \ref{surj3}, pick
$\sigma(1)\in\nhat{n}$ such that $\tau=\mu_{\sigma(1)}$. By Theorem \ref{PMtriag},
$P_{M\!,F\!,\,\sigma}$ is a compact subsemigroup 
of $\beta\ben$ containing all of the idempotents and by 
Theorem \ref{psequiv}, $Q=P_{M\!,F\!,\,\sigma}$.\end{proof}

Recall that a set of sets ${\mathcal B}$ is said to be partition regular if whenever ${\mathcal F}$ is a finite set of sets
and $\bigcup {\mathcal F}\in {\mathcal B}$, there exist $A\in {\mathcal F}$ and $B\in {\mathcal B}$ such that $B\subseteq A$.

\begin{corollary}\label{pspr} Let $n\in\ben$ and let $\tau$
be an $S_0$-independent  homomorphism from $S_n$ to $\ben$.
Let ${\mathcal B}=\{B\subseteq\ben:$ whenever $D$ is a piecewise syndetic subset of $S_0$, there
exists $w\in S_n$ such that $\{w(\vec x):\vec x\in\bea^n\}\subseteq D$ and
$\tau(w)\in B\}$. Then ${\mathcal B}$ is partition regular.
\end{corollary}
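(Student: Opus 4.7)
The plan is to reduce partition regularity of $\mathcal{B}$ to a standard product-coloring argument by first invoking Theorem \ref{psequiv} to replace the piecewise-syndetic formulation with the (equivalent) coloring formulation. By that theorem, for any $B\subseteq \ben$, one has $B\in\mathcal{B}$ if and only if whenever $S_0$ is finitely colored there exists $w\in S_n$ such that $\{w(\vec x):\vec x\in\bea^n\}$ is monochromatic and $\tau(w)\in B$. So working with the coloring version throughout is harmless.

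Next, I would argue by contradiction. Suppose $B\in\mathcal{B}$ and $B=B_1\cup B_2\cup\cdots\cup B_r$, yet no $B_i$ lies in $\mathcal{B}$. Then for each $i\in\nhat{r}$ there is a finite coloring $\chi_i\colon S_0\to C_i$ such that no $w\in S_n$ simultaneously has $\{w(\vec x):\vec x\in\bea^n\}$ monochromatic under $\chi_i$ and $\tau(w)\in B_i$. Consider the product coloring
\[
\chi\colon S_0\to C_1\times C_2\times\cdots\times C_r,\qquad \chi(s)=\bigl(\chi_1(s),\chi_2(s),\ldots,\chi_r(s)\bigr).
\]
Since $B\in\mathcal{B}$, the coloring formulation yields some $w\in S_n$ such that $\{w(\vec x):\vec x\in\bea^n\}$ is $\chi$-monochromatic and $\tau(w)\in B$. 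Being $\chi$-monochromatic forces being $\chi_i$-monochromatic for every $i$. Since $\tau(w)\in B=\bigcup_{i=1}^r B_i$, we have $\tau(w)\in B_j$ for some $j\in\nhat{r}$, and the pair $(w,j)$ contradicts the choice of $\chi_j$. Hence some $B_i$ must belong to $\mathcal{B}$, which is exactly partition regularity.

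The only real work is the application of Theorem \ref{psequiv}; once that equivalence is in hand, the product-coloring step is completely routine and is the standard way to see that any family of subsets of a set $X$ defined by a ``Ramsey-type'' colouring property is partition regular. I do not expect any serious obstacle: no extra hypothesis on $\tau$ (such as surjectivity or a minimal idempotent assumption) is needed, and in particular this proof works uniformly whether $\tau(S_n)=\ben$ or only a subsemigroup of $\ben$.
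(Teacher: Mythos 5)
Your proof is correct and follows essentially the same route as the paper: invoke Theorem \ref{psequiv} to pass to the finite-coloring formulation, and then supply the standard product-coloring argument that the paper compresses into ``it is routine to show.'' Your observation that no surjectivity hypothesis on $\tau$ is needed is also consistent with the statement.
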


\begin{proof} 
By Theorem \ref{psequiv},
${\mathcal B}=\{B\subseteq\ben:$ whenever $S_0$ is finitely colored, there exists $w\in S_n$
such that $\{w(\vec x):x\in\bea^n\}$ is monochromatic and $\tau(w)\in B\}$.
It is routine to show that if $k\in\ben$, $B_i\subseteq \ben$ for each $i\in\nhat{k}$,
and $\bigcup_{i=1}^kB_i$ has the property that 
whenever $S_0$ is finitely colored, there exists $w\in S_n$
such that $\{w(\vec x):x\in\bea^n\}$ is monochromatic and $\tau(w)\in \bigcup_{i=1}^k B_i$,
then some $B_i\in{\mathcal B}$.\end{proof}

Since the intersection of any collection of compact semigroups having the finite intersection 
property is a compact semigroup, it follows that there exists a smallest compact subsemigroup of $(\beta\ben)^k$
containing the idempotents of $(\beta\ben)^k$.

\begin{question}\label{qsmall} Let $k\in\ben$, let $M$ be the
$k\times k$ identity matrix, and let $\sigma$ be the identity function
on $\nhat{k}$. 
\begin{itemize}
\item[(a)] If $F=\{h_{\vec x}:\vec x\in\bea^k\}$, is $P_{M\!,F\!,\,\sigma}$ the smallest compact subsemigroup
of $(\beta\ben)^k$ containing the idempotents of $(\beta\ben)^k$?
\item[(b)] If not, does there exist a finite nonempty set $F$ of 
$S_0$-preserving homomorphisms such that
$P_{M\!,F\!,\,\sigma}$ is the smallest compact subsemigroup
of $(\beta\ben)^k$ containing the idempotents of $(\beta\ben)^k$?
\end{itemize}\end{question}

\begin{question}\label{qtriag} Let $k\in\ben$ and let $M$ and $N$ be 
$k\times k$ lower triangular matrices with positive diagonal entries
and nonpositive entries below the diagonal. 
Do there exist a finite nonempty set $F$ of 
$S_0$-preserving homomorphisms from $S_k$ to $S_0$ and a permutation
$\sigma$ of $\nhat{k}$ such that $P_{M\!,F\!,\,\sigma}\neq P_{N\!,F\!,\,\sigma}$?
\end{question}

Because of Question \ref{qsmall}, we are interested in 
the smallest compact subsemigroup
of $(\beta\ben)^k$ containing the idempotents of $(\beta\ben)^k$.

Given a compact right topological semigroup $T$,
we let $E(T)$ be the set of idempotents in $T$.  If $I$ is a set and
for each $i\in I$, $T_i$ is a compact  right topological 
semigroup, then $E(\bigtimes_{i\in I}T_i)=\bigtimes_{i\in I}E(T_i)$ because the operation in
$\bigtimes_{i\in I}T_i$ is coordinatewise.
Also by \cite[Theorem 2.23]{HS} $K(\bigtimes_{i\in I}T_i)=\bigtimes_{i\in I}K(T_i)$
so that $E\big(K(\bigtimes_{i\in I}T_i)\big)=\bigtimes_{i\in I}E\big(K(T_i)\big)$.

\begin{definition}\label{defJ} Let $T$ be a compact  right topological
semigroup and let $A\subseteq T$. Then $J_T(A)$ is the smallest compact 
subsemigroup of $T$ containing $A$.\end{definition}

We next show that $J_{(\beta\ben)^k}\big(E((\beta\ben)^k)\big)=\big(J_{\beta\ben}(E(\beta\ben))\big)^k$
for $k\in\ben$ and that a similar result applies to the minimal idempotents.  
Notice that in general 
$J_{T_1\times T_2}(A_1\times A_2) \subseteq J_{T_1}(A_1)\times J_{T_2}(A_2)$. But 
equality need not always hold even in the case that $T_1=T_2$ and $A_1=A_2$. For example,
let $A^+$ be the free semigroup on the alphabet $A=\{a,b\}$, and
$T=\beta A^+$. Then, identifying the letters of $A$ with the length one words so that 
$A$ is a subset of $T$, we have $J_T(A)\times J_T(A)=T\times T$
while $J_{T\times T}(A\times A)=\cl_{T\times T}\{(u,w)\in A^+\times A^+:|u|=|w|\}$.

\begin{theorem}\label{thJab} Let $T_1$ and $T_2$ be compact  right topological
semigroups and for $i\in\{1,2\}$ let $A_i$ be a nonempty subset of $T_i$ with $A_i\subseteq \{ab:a,b\in A_i\}$.  Then
$J_{T_1\times T_2}(A_1\times A_2)=J_{T_1}(A_1)\times J_{T_2}(A_2)$.\end{theorem}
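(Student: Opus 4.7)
The inclusion $J_{T_1\times T_2}(A_1\times A_2)\subseteq J_{T_1}(A_1)\times J_{T_2}(A_2)$ is noted in the paragraph preceding the statement, so the task is to prove the reverse inclusion. Write $J=J_{T_1\times T_2}(A_1\times A_2)$, $J_1=J_{T_1}(A_1)$, and $J_2=J_{T_2}(A_2)$. The plan is to prove $J_1\times J_2\subseteq J$ in three stages, in each of which some set is realised as a closed subsemigroup of $T_i$ containing $A_i$, so that minimality of $J_i$ kicks in.

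The first stage is to show $J_1\times A_2\subseteq J$ (and symmetrically $A_1\times J_2\subseteq J$). Put $R_0=\{a\in T_1:\{a\}\times A_2\subseteq J\}$. This set is closed, being the intersection over $c\in A_2$ of the closed sets $\{a\in T_1:(a,c)\in J\}$, and it trivially contains $A_1$. The crucial point, where the hypothesis $A_2\subseteq A_2\cdot A_2$ enters, is that $R_0$ is a subsemigroup: given $a,a'\in R_0$ and $c\in A_2$, factor $c=c_1c_2$ with $c_1,c_2\in A_2$; then $(a,c_1),(a',c_2)\in J$, so $(aa',c)=(a,c_1)(a',c_2)\in J$. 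Minimality of $J_1$ forces $R_0\supseteq J_1$, and the symmetric argument (using $A_1\subseteq A_1\cdot A_1$) gives $A_1\times J_2\subseteq J$.

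The second stage is to show $\overline{J_2\cdot J_2}=J_2$. Let $Z=\overline{J_2\cdot J_2}$; clearly $Z\subseteq J_2$ since $J_2$ is a closed subsemigroup, and $Z\supseteq A_2$ because $A_2\subseteq A_2\cdot A_2\subseteq J_2\cdot J_2$. To see $Z$ is a subsemigroup, take $x,y\in Z$ and a net $x_\alpha\in J_2\cdot J_2$ with $x_\alpha\to x$; since $x_\alpha\in J_2$ and $y\in J_2$, each $x_\alpha y$ lies in $J_2\cdot J_2\subseteq Z$, and by right continuity of $\rho_y$ we get $x_\alpha y\to xy$, hence $xy\in Z$. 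Minimality of $J_2$ now gives $Z\supseteq J_2$, so $Z=J_2$.

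The final stage is to deduce $J_1\times J_2\subseteq J$ by considering $R_1=\{a\in T_1:\{a\}\times J_2\subseteq J\}$. It is closed just as $R_0$ was, and it contains $A_1$ by the symmetric conclusion of the first stage. For the subsemigroup property, let $a,a'\in R_1$ and set $F=\{c\in T_2:(aa',c)\in J\}$; then $F$ is closed, and for any $c_1,c_2\in J_2$ one has $(aa',c_1c_2)=(a,c_1)(a',c_2)\in J$, so $F\supseteq J_2\cdot J_2$ and by the second stage $F\supseteq J_2$, whence $aa'\in R_1$. Minimality of $J_1$ then yields $R_1\supseteq J_1$, i.e.\ $J_1\times J_2\subseteq J$. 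The chief obstacle is the subsemigroup verification for $R_0$ in the first stage: without the factorization hypothesis $A_i\subseteq A_i\cdot A_i$ that argument collapses, in line with the counterexample in $\beta A^+$ recorded just before the theorem.
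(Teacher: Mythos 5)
Your proof is correct and follows essentially the same two-pass minimality argument as the paper's: first one shows that $A_1\times J_{T_2}(A_2)\subseteq J_{T_1\times T_2}(A_1\times A_2)$ by realising a suitable auxiliary set as a compact subsemigroup containing the generators, and then one bootstraps to the full product $J_{T_1}(A_1)\times J_{T_2}(A_2)$ by the same device. The only organisational difference is that you isolate the fact $\overline{J_{T_2}(A_2)\cdot J_{T_2}(A_2)}=J_{T_2}(A_2)$ as a standalone lemma, whereas the paper obtains the same effect via a nested minimality argument carried out inside the proof that the relevant auxiliary set is closed under multiplication.
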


\begin{proof}As $\Jo\times\Jt$ is a compact subsemigroup of $T_1\times T_2$ containing $A_1\times A_2$ we have immediately that $\Jot \subseteq \Jo\times \Jt$. So it remains to show that $\Jo\times\Jt  \subseteq \Jot$. Let $Y=\{q\in \Jt:(p,q)\in\Jot\,\mbox{for all}\, p\in A_1\}$.
Then $Y$ is compact and $A_2\subseteq Y$.  Further, let
$q_1,q_2\in Y$ and $p\in A_1$, and write $p=p_1p_2$ with $p_1,p_2\in A_1$. 
Then $(p_1,q_1),(p_2,q_2)\in \Jot$ and hence  $(p_1,q_1)(p_2,q_2)=(p,q_1q_2)\in \Jot$. Thus
$Y$ is a compact subsemigroup of $\Jt$ containing $A_2$ so
$Y=\Jt$.

Now let $X=\{x\in \Jo:\{x\}\times \Jt\subseteq \Jot\}$.
Then $X$ is compact and if $p\in A_1$, then $\{p\}\times\Jt=\{p\}\times Y\subseteq \Jot$,
so $A_1\subseteq X$. We next claim that  $X$ is a semigroup. In fact, let $x_1,x_2\in X$ and set
$Z=\{z\in \Jt:(x_1x_2,z)\in \Jot\}$. Then $Z$ is compact. Let $q\in A_2$ and write $q=q_1q_2$ with $q_1,q_2\in A_2$.
Then $(x_1,q_1),(x_2,q_2)\in \Jot$ and hence $(x_1x_2,q)\in \Jot$. Thus $Z$ contains $A_2$. Finally, let $z_1,z_2\in Z$.
Then since $z_1,z_2\in \Jt$ and $x_1,x_2\in X$ we deduce that $(x_1,z_1),(x_2,z_2) \in \Jot$ implying 
that $(x_1x_2,z_1z_2)\in \Jot$ and hence $z_1z_2 \in Z$. Thus $Z$ is a compact subsemigroup of 
$\Jt$ containing $A_2$ and hence $Z=\Jt$ from which it follows that $x_1x_2\in X$.  Having shown 
that $X$ is compact subsemigroup of $\Jo$ containing $A_1$ we deduce that $X=\Jo$. In conclusion, 
$\Jo\times\Jt =X\times \Jt \subseteq \Jot$ as required.\end{proof}

Notice in particular that if for $i\in\{1,2\}$, $A_i$ is a nonempty subset of $E(T_i)$, then
$A_i\subseteq \{ab:a,b\in A_i\}$, so $J_{T_1\times T_2}(A_1\times A_2)=J_{T_1}(A_1)\times J_{T_2}(A_2)$.

\begin{corollary}\label{corJE} Let $k\in\ben$. The smallest
compact subsemigroup of $(\beta\ben)^k$ containing
the idempotents of $(\beta\ben)^k$ is $\big(J_{\beta\ben}\big(E(\beta\ben)\big)\big)^k$.
The smallest
compact subsemigroup of $(\beta\ben)^k$ containing the minimal 
idempotents of $(\beta\ben)^k$ is $\big(J_{\beta\ben}\big(E(K(\beta\ben))\big)\big)^k$.
\end{corollary}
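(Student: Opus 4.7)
The plan is to reduce both assertions to Theorem \ref{thJab} by induction on $k$. First I would record the two structural identities already noted in the paragraphs preceding the statement: since the operation on $(\beta\ben)^k$ is coordinatewise, $E\big((\beta\ben)^k\big)=\bigtimes_{i=1}^k E(\beta\ben)$, and by \cite[Theorem 2.23]{HS}, $K\big((\beta\ben)^k\big)=\bigtimes_{i=1}^k K(\beta\ben)$, whence $E\big(K((\beta\ben)^k)\big)=\bigtimes_{i=1}^k E\big(K(\beta\ben)\big)$. Thus in both parts of the corollary the set of generators is literally a Cartesian product, which is exactly the situation Theorem \ref{thJab} addresses.

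The hypothesis of Theorem \ref{thJab} to check is $A_i\subseteq\{ab:a,b\in A_i\}$ for each factor. This is immediate in both cases: every $e\in E(\beta\ben)$ satisfies $e=e\cdot e$, and the same holds for every element of $E(K(\beta\ben))$. So the relevant Cartesian product sets of generators are closed under ``being a product of two of their own elements,'' which is all that Theorem \ref{thJab} requires.

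Now I would proceed by induction on $k$. The case $k=1$ is the definition of $J_{\beta\ben}$. For the inductive step, apply Theorem \ref{thJab} with $T_1=\beta\ben$, $T_2=(\beta\ben)^{k-1}$, $A_1=E(\beta\ben)$, and $A_2=E\big((\beta\ben)^{k-1}\big)=\big(E(\beta\ben)\big)^{k-1}$, obtaining
\[
J_{(\beta\ben)^k}\big(E((\beta\ben)^k)\big)
=J_{\beta\ben}\big(E(\beta\ben)\big)\times J_{(\beta\ben)^{k-1}}\big(E((\beta\ben)^{k-1})\big)\,,
\]
and the inductive hypothesis rewrites the right side as $\big(J_{\beta\ben}(E(\beta\ben))\big)^k$. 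The argument for the minimal idempotents is identical, substituting $E(K(\beta\ben))$ for $E(\beta\ben)$ throughout and using the product formula for $K$ to identify the generating set.

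There is no serious obstacle: all of the work has already been done in Theorem \ref{thJab}, and the corollary is essentially the observation that idempotent generators automatically satisfy the self-squaring hypothesis needed to push the product formula through the factors. The only thing to be careful about is invoking \cite[Theorem 2.23]{HS} to transfer the computation of minimal idempotents to the second assertion.
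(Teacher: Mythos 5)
Your proof is correct and follows the same route as the paper: apply Theorem \ref{thJab} inductively to the product decomposition $(\beta\ben)^k=\beta\ben\times(\beta\ben)^{k-1}$, using the coordinatewise identities $E\big((\beta\ben)^k\big)=\big(E(\beta\ben)\big)^k$ and $E\big(K((\beta\ben)^k)\big)=\big(E(K(\beta\ben))\big)^k$ to identify the generating sets, and the fact that idempotents satisfy $e=ee$ to verify the hypothesis of Theorem \ref{thJab}. The paper's proof is a one-line invocation of "Theorem \ref{thJab} and induction" together with the remark immediately preceding the theorem that sets of idempotents satisfy the self-squaring hypothesis; you have simply written out that induction explicitly.
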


\begin{proof} By Theorem \ref{thJab} and induction, 
$$J_{(\beta\ben)^k}((E(\beta\ben))^k)=\big(J_{\beta\ben}\big(E(K(\beta\ben))\big)\big)^k$$
and we already observed that the set of idempotents of 
$(\beta\ben)^k$ is $\big(E(\beta\ben)\big)^k$. The second conclusion is
essentially the same. \end{proof}

We note now that the version of Theorem \ref{thJab} for infinite products
is also valid.

\begin{theorem}\label{thJinf} Let $I$ be an infinite set. For each
$i\in I$, let $T_i$ be a compact  right topological semigroup 
and let $A_i$ be a nonempty subset of $T_i$ such that $A_i\subseteq\{ab:a,b\in A_i\}$. Then
$J_{\times_{i \in I}T_i}(\bigtimes_{i\in I}A_i)=\bigtimes_{i\in I}J_{T_i}(A_i)$.
\end{theorem}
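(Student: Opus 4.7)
\textbf{Proof proposal for Theorem \ref{thJinf}.}
The plan is to reduce the infinite case to the finite case of Theorem \ref{thJab} by using the continuity of finite projections and basic open sets in the product topology.

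First, I would establish the finite product version by induction on $n \geq 2$: given compact right topological semigroups $T_1,\ldots,T_n$ and nonempty $A_i \subseteq T_i$ with $A_i \subseteq \{ab : a,b \in A_i\}$, one has
\[
J_{\bigtimes_{i=1}^n T_i}\Bigl(\bigtimes_{i=1}^n A_i\Bigr) = \bigtimes_{i=1}^n J_{T_i}(A_i).
\]
The base case $n=2$ is Theorem \ref{thJab}; for the inductive step, group the first $n$ factors together and note that $\bigtimes_{i=1}^n A_i$ still satisfies the hypothesis $\bigtimes_{i=1}^n A_i \subseteq \bigl\{\vec a\vec b : \vec a,\vec b \in \bigtimes_{i=1}^n A_i\bigr\}$ (decompose coordinatewise, using choice), so Theorem \ref{thJab} applies to the two-factor decomposition $\bigl(\bigtimes_{i=1}^n T_i\bigr) \times T_{n+1}$.

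Now write $T = \bigtimes_{i \in I} T_i$, $A = \bigtimes_{i \in I} A_i$, and $J = J_T(A)$. The inclusion $J \subseteq \bigtimes_{i \in I} J_{T_i}(A_i)$ is immediate, because $\bigtimes_{i \in I} J_{T_i}(A_i)$ is a compact subsemigroup of $T$ containing $A$. For the reverse inclusion, fix $\vec x \in \bigtimes_{i \in I} J_{T_i}(A_i)$; I will show that every basic open neighborhood of $\vec x$ meets $J$, and conclude $\vec x \in J$ by compactness (hence closedness) of $J$. A basic open neighborhood of $\vec x$ has the form $U = \bigtimes_{i \in F} U_i \times \bigtimes_{i \notin F} T_i$ for some finite $F \subseteq I$ and open sets $U_i \ni x_i$ in $T_i$.

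For the finite projection $\pi_F : T \to \bigtimes_{i \in F} T_i$, which is a continuous homomorphism, the image $\pi_F(J)$ is a compact subsemigroup of $\bigtimes_{i \in F} T_i$ containing $\pi_F(A) = \bigtimes_{i \in F} A_i$. By the finite version established above,
\[
\bigtimes_{i \in F} J_{T_i}(A_i) = J_{\bigtimes_{i \in F} T_i}\Bigl(\bigtimes_{i \in F} A_i\Bigr) \subseteq \pi_F(J).
\]
In particular $\pi_F(\vec x) \in \pi_F(J)$, so there exists $\vec y \in J$ with $y_i = x_i \in U_i$ for every $i \in F$; thus $\vec y \in U \cap J$, as required.

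The only subtlety worth watching in this plan is making sure the hypothesis $A_i \subseteq \{ab : a,b \in A_i\}$ is correctly propagated through the finite induction so that Theorem \ref{thJab} is applicable at each step; this is what justifies the two-factor splitting in the inductive step, and it is exactly the coordinatewise decomposition remarked above. Everything else is a routine projection/compactness argument.
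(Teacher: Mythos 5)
Your proof is correct, and it takes a genuinely different route from the paper's. Both arguments begin the same way: the easy inclusion $J_T(A)\subseteq\bigtimes_{i\in I}J_{T_i}(A_i)$, the reduction of the hard inclusion to a density statement with respect to basic open sets, and the use of the finite-product case obtained by iterating Theorem \ref{thJab}. The difference is in how one shows that a basic open neighborhood $U$, determined by a finite $F\subseteq I$, meets $J_T(A)$. The paper's proof fixes in advance a choice $e_i\in A_i$ for each $i$, shows (via Theorem \ref{thJab} applied to the two-factor splitting $Y_F\times Z_F$ with the \emph{infinite} factor $Z_F=\bigtimes_{i\in I\setminus F}T_i$) that the slice $X_F=\{\vec x: x_i=e_i \text{ for } i\notin F\}\cap\bigtimes_i J_{T_i}(A_i)$ is contained in $J_T(A)$, and then exhibits an explicit element of $X_F\cap U$. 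Your proof instead observes that the projection $\pi_F$ is a continuous homomorphism, so $\pi_F[J_T(A)]$ is a compact subsemigroup of $\bigtimes_{i\in F}T_i$ containing $\bigtimes_{i\in F}A_i$ and hence contains $\bigtimes_{i\in F}J_{T_i}(A_i)$; lifting gives the desired element of $U\cap J_T(A)$. Your approach avoids both the preliminary choice of the $e_i$ and the application of Theorem \ref{thJab} to an infinite factor, relying only on the finite-product case; in exchange it is nonconstructive, producing the approximating element by an image argument rather than displaying it. Both are sound; yours is a touch leaner.
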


\begin{proof} Let $Y=\bigtimes_{i\in I}T_i$. For each $i\in I$, choose $e_i\in A_i$.
Given $F\in\pf(I)$,
let $Y_F=\bigtimes_{i\in F}T_i$, let $Z_F=\bigtimes_{i\in I\setminus F}T_i$,
let $$X_F=\{\vec x\in\bigtimes_{i\in I}J_{T_i}(A_i):(\forall i\in I\setminus F)(x_i=e_i)\}\,,$$
and let $B_F=\{\vec x\in\bigtimes_{i\in I}A_i:(\forall i\in I\setminus F)(x_i=e_i)\}$.

We shall show that for each $F\in\pf(I)$, $X_F\subseteq J_Y(\bigtimes_{i\in I}A_i)$.
Let $F\in\pf(I)$ be given.  Now $X_F$ is topologically and algebraically
isomorphic to $$\bigtimes_{i\in F}J_{T_i}(A_i)\times\bigtimes_{i\in I\setminus F}\{e_i\}\,,$$
$B_F$ is topologically and algebraically
isomorphic to $\bigtimes_{i\in F}A_i\times\bigtimes_{i\in I\setminus F}\{e_i\}$,
and $\bigtimes_{i\in I\setminus F}\{e_i\}\subseteq J_{Z_F}(\bigtimes_{i\in I\setminus F}\{e_i\})$.
So using Theorem \ref{thJab}  we have
$$\begin{array}{rl} X_F&\approx \bigtimes_{i\in F}J_{T_i}(A_i)\times\bigtimes_{i\in I\setminus F}\{e_i\}\\
&\subseteq J_{Y_F}(\bigtimes_{i\in F}A_i)\times J_{Z_F}(\bigtimes_{i\in I\setminus F}\{e_i\})\\
&=J_{Y_F\times Z_F}(\bigtimes_{i\in F}A_i\times \bigtimes_{i\in I\setminus F}\{e_i\})\\
&\approx J_Y(B_F)\\
&\subseteq J_Y(\bigtimes_{i\in I}A_i)\,.\end{array}$$

Next we claim that $\bigtimes_{i\in I}J_{T_i}(A_i)
\subseteq \cl_Y \bigcup_{F\in\pf(I)}X_F$.  To see this, let
$\vec z\in\bigtimes_{i\in I}J_{T_i}(A_i)$ and let
$U$ be a neighborhood of $\vec z$ in $Y$. Pick $F\in \pf(I)$
and for each $i\in F$, pick a neighborhood $V_i$ of $z_i$
in $T_i$ such that $\bigcap_{i\in F}\pi_i^{-1}[V_i]\subseteq U$.
Define $\vec x\in Y$ by $x_i=\left\{\begin{array}{cl} z_i&\hbox{if }i\in F\\
e_i&\hbox{if }i\in I\setminus F\,.\end{array}\right.$
Then $\vec x\in U\cap X_F$.  Therefore 
$\bigtimes_{i\in I}J_{T_i}(A_i)\subseteq J_Y(\bigtimes_{i\in I}A_i)$. 
Since $\bigtimes_{i\in I}J_{T_i}(A_i)$ is a compact semigroup containing
$\bigtimes_{i\in I}A_i$, the reverse inclusion is immediate.\end{proof}

The curious reader may wonder what the situation is with respect to
the smallest semigroup containing a given set. Given a semigroup
$T$ and a nonempty subset $A$ of $T$, let $J'_T(A)$ be the smallest 
subsemigroup of $T$ containing $A$, that is the set of all finite
products of members of $A$ in any order allowing repetition. If $T_1$ and
$T_2$ are any semigroups and $A_1$ and $A_2$ are nonempty
subsets of $T_1$ and $T_2$ respectively such that 
$A_i\subseteq\{ab:a,b\in A_i\}$ for $i\in\{1,2\}$, then
$J'_{T_1\times T_2}(A_1\times A_2)=J'_{T_1}(A_1)\times J'_{T_2}(A_2)$.
This follows from the proof of Theorem \ref{thJab} by
omitting all references to the topology.

However, the analogue of Theorem \ref{thJinf} need not hold.  To
see this, let $T$ be the set of words over the 
alphabet $\{a_n:n\in\ben\}$ that have no adjacent occurrences of
any one letter. Given $u,w\in T$, then $u\cdot w$ is ordinary
concatenation unless $u=xa_n$ and $w=a_ny$ for some 
$n\in\ben$ and some $x,y\in T\cup\{\emp\}$, in which
case $u\cdot w=xa_ny$.  Let $A$ be the set of idempotents in $T$, that
is $A$ is the set of length one words.  Then $J'_T(A)=T$ but
$\{\vec x\in\bigtimes_{n=1}^\infty T:\{|x_n|:n\in\ben\}$ is bounded$\}$
is a proper subsemigroup of $\bigtimes_{n=1}^\infty T$ containing the idempotents.

\section{Compact ideals of $(\beta S)^k$}\label{seccptideals}

In this section we deal with results related to the Hales-Jewett 
Theorem and its extensions by themselves.  The first 
result here is motivated by the following theorem characterizing
image partition regular matrices.

\begin{theorem}\label{charIPR} Let $k,m\in\ben$ and let
$M$ be a $k\times m$ matrix with entries from $\beq$.  The
following statements are equivalent.
\begin{itemize}
\item[(a)] $M$ is image partition regular over $\ben$.
\item[(b)] For every central subset $D$ of $\ben$, there
exists $\vec x\in\ben^m$ such that $M\vec x\in D^k$.
\item[(c)] For every central subset $D$ of $\ben$, 
$\{\vec x\in\ben^m:M\vec x\in D^k\}$ is central in $\ben^m$.
\end{itemize}\end{theorem}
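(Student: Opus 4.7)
The plan is to prove the cycle (c) $\Rightarrow$ (b) $\Rightarrow$ (a) $\Rightarrow$ (c), with essentially all the work concentrated in the last implication. The first two are routine: since every central set is nonempty, (c) at once supplies some $\vec x$ with $M\vec x \in D^k$, giving (b). For (b) $\Rightarrow$ (a), let $\ben = C_1 \cup \cdots \cup C_r$ be a finite coloring and pick any minimal idempotent $p \in K(\beta\ben)$; exactly one $C_i$ belongs to $p$, and that $C_i$ is by definition central. Applying (b) to $D = C_i$ produces $\vec x \in \ben^m$ with $M\vec x \in C_i^k$, so the entries of $M\vec x$ are monochromatic, witnessing that $M$ is image partition regular over $\ben$.

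The substantive implication (a) $\Rightarrow$ (c) already appears as one of the many equivalent characterizations of image partition regularity compiled in \cite[Theorem 15.24]{HS}, and the simplest approach is to cite it from there. For an outline of a self-contained argument using the algebraic apparatus of the present paper, first clear denominators by choosing $c \in \ben$ so that $cM$ has integer entries, and replace $D$ by $D \cap c\ben$; the latter set belongs to every idempotent of $\beta\ben$ by \cite[Lemma 6.6]{HS}, so centrality is preserved. Then the map $\vec x \mapsto M\vec x$ extends continuously to a homomorphism $\widetilde M : (\beta\ben)^m \to (\beta\bez)^k$. Fix a minimal idempotent $p \in \beta\ben$ with $D \in p$ and set $\vec p = (p,p,\ldots,p)$, which is a minimal idempotent of $(\beta\ben)^k$ since $K((\beta\ben)^k) = K(\beta\ben)^k$. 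Hypothesis (a), translated into ultrafilter language, shows that the collection $\{\{\vec x \in \ben^m : M\vec x \in (D')^k\} : D' \in p\}$ has the finite intersection property, and hence that $\widetilde M^{-1}[\{\vec p\,\}]$ is a nonempty compact subsemigroup of $(\beta\ben)^m$.

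The main obstacle is to extract from this preimage an idempotent $\vec q$ that is minimal in all of $(\beta\ben)^m$, and not merely minimal within the subsemigroup $\widetilde M^{-1}[\{\vec p\,\}]$. The standard remedy is an iterative IP-set construction using the $D^\star$-shrinking trick: one builds a sequence $\vec x^{(1)},\vec x^{(2)},\ldots \in \ben^m$ by repeatedly invoking (a) on progressively smaller central subsets of $D^\star(p)$, arranging that every finite sum $\sum_{t \in F} \vec x^{(t)}$ has its $M$-image entrywise in $D^\star$. Any idempotent $\vec q$ of the compact semigroup $\bigcap_{n=1}^\infty \overline{FS(\langle \vec x^{(t)} \rangle_{t=n}^\infty)}$, guaranteed by \cite[Lemma 5.11]{HS}, then satisfies $E = \{\vec x \in \ben^m : M\vec x \in D^k\} \in \vec q$. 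Performing the construction inside a prechosen minimal left ideal of $(\beta\ben)^m$, or alternatively applying a Blass-type argument as in Theorem~\ref{blass}(1), ensures that $\vec q$ is in fact a minimal idempotent of $(\beta\ben)^m$, which certifies that $E$ is central in $\ben^m$.
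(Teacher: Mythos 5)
Your chain (c)\,$\Rightarrow$\,(b)\,$\Rightarrow$\,(a) is correct, and your primary approach to (a)\,$\Rightarrow$\,(c) — citing \cite[Theorem~15.24]{HS} — is precisely what the paper does, so at that level of resolution the two proofs coincide.

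However, the ``self-contained'' sketch of (a)\,$\Rightarrow$\,(c) that you offer as an alternative contains a genuine gap, and it is worth being precise about where. You assert that hypothesis (a), ``translated into ultrafilter language,'' shows that $\{\{\vec x\in\ben^m:M\vec x\in(D')^k\}:D'\in p\}$ has the finite intersection property, and later that one can ``build a sequence by repeatedly invoking (a) on progressively smaller central subsets of $D^\star(p)$, arranging that every finite sum has its $M$-image in $D^\star$.'' Neither step follows from (a). Image partition regularity says only that, given a finite coloring of $\ben$, \emph{some} color class contains all entries of $M\vec x$ for some $\vec x$; it gives no control over which class. Applied to the two-coloring $\{D^\star,\,\ben\setminus D^\star\}$, (a) may well hand you an image lying entirely in $\ben\setminus D^\star$. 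So asking (a) to produce $\vec x$ with $M\vec x\in(D')^k$ for a \emph{prescribed} $D'\in p$ is already the content of (b), and using it at this point is circular. This is exactly why (a)\,$\Rightarrow$\,(b) is the hard direction: the argument in \cite[Theorem~15.24]{HS} proceeds through structural characterizations of image partition regular matrices (the columns condition, embedding into first-entries matrices) and the Central Sets Theorem, rather than through direct ultrafilter manipulation. The final paragraph of your sketch, which waves at ``performing the construction inside a minimal left ideal'' or ``a Blass-type argument'' to force minimality of $\vec q$, inherits this difficulty and is likewise unsubstantiated. If you want a proof rather than a citation, you would need to reproduce the substance of \cite[Theorem~15.24]{HS}, not replace it with an IP-set construction that presupposes what it aims to prove.
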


\begin{proof} These are statements (a), (h), and (i) of
\cite[Theorem 15.24]{HS}.\end{proof}

As a consequence of Theorem \ref{matrix} (with $k=m=1$, $T=\ben$, $M=(1)$,
$\tau=\mu_1$, $F=\{h_{\vec x}:\vec x\in\A^n\}$, and
$p$ any idempotent in $\beta\ben$) we have that whenever
$D$ is piecewise syndetic in $S_0$ and $n\in\ben$, 
there exists $w\in S_n$ such that $\{w(\vec x):\vec x\in \A^n\}\subseteq D$.
Since whenever $S_0$ is finitely colored, one color class must
be piecewise syndetic, we see that Theorem \ref{ndimHJ} follows.
And, since central sets are piecewise syndetic, 
we have that whenever
$D$ is central in $S_0$ and $n\in\ben$, 
there exists $w\in S_n$ such that $\{w(\vec x):\vec x\in \A^n\}\subseteq D$.

\begin{theorem}\label{centralHJ} Let $n\in\ben$ and let $D$ be a central subset of $S_0$.
Let $F$ be a finite nonempty set of $S_0$-preserving homomorphisms from $S_n$ into $S_0$.
Then $\{w\in S_n:(\forall \nu\in F)(\nu(w)\in D)\}$ is central in $S_n$.
\end{theorem}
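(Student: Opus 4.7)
The plan is to reduce this to Theorem~\ref{blass}(1) applied with $S=S_0$ and $T=S_0\cup S_n$, then transfer the resulting minimal idempotent of $\beta T$ back to $\beta S_n$.

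First, I would extend each $\nu\in F$ to a map $\tilde\nu:T\to S_0$ by declaring $\tilde\nu$ to be the identity on $S_0$, and verify that $\tilde\nu$ is a homomorphism from $T$ into $S_0$. This requires checking the four cases according to whether each factor is in $S_0$ or in $S_n$; the two mixed cases ($x\in S_0$, $y\in S_n$ and vice versa) are exactly where the $S_0$-preserving hypothesis is used, and the remaining cases are immediate. Note also that $S_n$ is an ideal of $T$ because prepending or appending any word to an $n$-variable word yields an $n$-variable word.

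Next, since $D$ is central in $S_0$, choose a minimal idempotent $p\in\beta S_0$ with $D\in p$. Viewing $\beta S_0$ as $\overline{S_0}\subseteq\beta T$, apply \cite[Theorem 1.60]{HS} to obtain a minimal idempotent $q\in\beta T$ with $q\leq p$. Theorem~\ref{blass}(1) then gives $\tilde\nu(q)=p$ for every $\nu\in F$, so $\tilde\nu^{-1}[D]\in q$ for each $\nu\in F$ and therefore
\[
Q=\bigcap_{\nu\in F}\tilde\nu^{-1}[D]\in q\,.
\]

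The main obstacle (and the real content of the argument beyond Theorem~\ref{blass}) is showing that $q$ actually witnesses centrality in $\beta S_n$, not merely in $\beta T$. Since $S_n$ is an ideal of $T$, $\overline{S_n}$ is an ideal of $\beta T$, so $K(\beta T)\subseteq\overline{S_n}$ and in particular $S_n\in q$. Under the identification $\beta S_n\cong\overline{S_n}$, $\beta S_n$ is a closed subsemigroup of $\beta T$ that meets $K(\beta T)$, so by \cite[Theorem 1.65]{HS}, $K(\beta S_n)=\beta S_n\cap K(\beta T)$, and thus $q$ is a minimal idempotent of $\beta S_n$. Consequently $Q\cap S_n\in q$ is central in $S_n$. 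Since $\tilde\nu$ agrees with $\nu$ on $S_n$, we have $Q\cap S_n=\{w\in S_n:(\forall\nu\in F)(\nu(w)\in D)\}$, which finishes the proof.
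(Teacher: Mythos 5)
Your proof is correct and follows essentially the same route as the paper's: extend each $\nu\in F$ to $T=S_0\cup S_n$ by the identity, apply Theorem~\ref{blass} to land in a minimal idempotent, and use that $S_n$ is an ideal of $T$ to pull the centrality down to $S_n$. The only difference is that you unroll Theorem~\ref{blass}(2) into its proof via Theorem~\ref{blass}(1), and you spell out explicitly (via \cite[Theorem 1.65]{HS}) why $q$ is a minimal idempotent of $\beta S_n$ — a step the paper compresses into the single sentence ``Since $S_n$ is an ideal of $T$, $Q\cap S_n$ is central in $S_n$.''
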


\begin{proof} Let $T=S_n\cup S_0$ and extend each $\nu\in F$ to all of $T$ by
defining $\nu$ to be the identity on $S_0$. By Theorem \ref{blass}(2), pick a
central subset $Q$ of $T$ such that for each $t\in Q$, $\{\nu(t):\nu\in F\}\subseteq D\}$.
Since $S_n$ is an ideal of $T$, $Q\cap S_n$ is central in $S_n$ and
$Q\cap S_n\subseteq\{w\in S_n:(\forall \nu\in F)(\nu(w)\in D)\}$.
\end{proof} 

We  conclude this section by investigating ideals related to the
extensions of the Hales-Jewett Theorem.

\begin{definition}\label{defRn} For $n\in\ben$, 
$$R_n=\{p\in\beta S_0:(\forall B\in p)(\exists w\in S_n)(\{w(\vec x):\vec x\in \A^n\}\subseteq B)\}\,.$$
\end{definition}

There are numerous ways to use known results to show that each $R_n\neq\emp$.
From the point of view of this paper, it is probably easiest to invoke
Theorem \ref{matrix} as discussed above.

\begin{theorem}\label{Rnideal} Let $n\in\ben$. Then $R_n$ is a compact two sided ideal of
$\beta S_0$.\end{theorem}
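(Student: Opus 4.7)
The plan is to verify three things separately: $R_n$ is closed (hence compact as a closed subset of the compact space $\beta S_0$), $R_n$ is a left ideal, and $R_n$ is a right ideal. Each step is a short unpacking of definitions, so this proof does not really have a main obstacle; the only subtle point is where finiteness of $\A^n$ gets used.

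For closedness, I would show $\beta S_0\setminus R_n$ is open. If $p\notin R_n$, pick $B\in p$ such that no $w\in S_n$ satisfies $\{w(\vec x):\vec x\in \A^n\}\subseteq B$. Then $\overline{B}$ is a basic clopen neighborhood of $p$, and any $q\in\overline{B}$ has $B\in q$, so the same $B$ witnesses $q\notin R_n$; hence $\overline{B}\cap R_n=\emp$.

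For the left ideal property, take $q\in\beta S_0$ and $p\in R_n$, and let $B\in q\cdot p$. By the characterization of ultrafilter multiplication recalled in the introduction, $\{s\in S_0:s^{-1}B\in p\}\in q$; in particular this set is nonempty, so pick any such $s$. Then $s^{-1}B\in p$, and since $p\in R_n$ there is $w\in S_n$ with $\{w(\vec x):\vec x\in \A^n\}\subseteq s^{-1}B$. The concatenation $sw$ lies in $S_n$ (it inherits all the variables from $w$), and $(sw)(\vec x)=s\cdot w(\vec x)\in B$ for every $\vec x\in \A^n$, so $q\cdot p\in R_n$.

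For the right ideal property, take $p\in R_n$, $q\in\beta S_0$, and $B\in p\cdot q$. Then $\{s\in S_0:s^{-1}B\in q\}\in p$, so $p\in R_n$ furnishes $w\in S_n$ with $\{w(\vec x):\vec x\in \A^n\}\subseteq\{s\in S_0:s^{-1}B\in q\}$, i.e.\ $w(\vec x)^{-1}B\in q$ for every $\vec x\in\A^n$. Because $\A^n$ is finite, $C:=\bigcap_{\vec x\in\A^n}w(\vec x)^{-1}B\in q$, hence $C$ is nonempty; picking $t\in C$ gives $wt\in S_n$ and $(wt)(\vec x)=w(\vec x)\cdot t\in B$ for each $\vec x$, so $p\cdot q\in R_n$. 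The finiteness of $\A$ is precisely what makes the right-ideal step go through, since it lets us collapse finitely many sets in $q$ to a single member of $q$.
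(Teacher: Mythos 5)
Your proof is correct and follows essentially the same route as the paper: both establish closedness directly and then verify the left- and right-ideal properties by unwinding the ultrafilter-product characterization, with the right-ideal step using the finiteness of $\A^n$ to intersect finitely many members of $q$. The one small thing the paper handles that you omit is nonemptiness of $R_n$ (needed for $R_n$ to count as an ideal); the paper points out just before the theorem that $R_n\neq\emp$ follows from Theorem \ref{matrix}, and its proof of the theorem opens by invoking that fact.
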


\begin{proof} We have that $R_n\neq\emp$ and it is trivially compact.  Let $p\in R_n$ and
let $q\in\beta S_0$.  To see that $R_n$ is a left ideal, let $B\in qp$.
Pick $u\in S_0$ such that $u^{-1}B\in p$ and pick $w\in S_n$ such that
$\{w(\vec x):\vec x\in \A^n\}\subseteq u^{-1}B$. Then $uw\in S_n$
and $\{(uw)(\vec x):\vec x\in \A^n\}\subseteq B$.

To see that $R_n$ is a right ideal, let $B\in pq$.
Pick $w\in S_n$ such that $\{w(\vec x):\vec x\in \A^n\}\subseteq\{u\in S_0:u^{-1}B\in q\}$.
Pick $u\in\bigcap_{\vec x\in \A^n}w(\vec x)^{-1}B$.
Then $wu\in S_n$ and $\{(wu)(\vec x):\vec x\in \A^n\}\subseteq B$. \end{proof}

\begin{theorem}\label{Rndecrease} Let $n\in\ben$. Then $R_{n+1}\subseteq R_n$.\end{theorem}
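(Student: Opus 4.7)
The plan is to unwind the definitions and exhibit a concrete construction that turns a good $(n+1)$-variable word into a good $n$-variable word.

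Take $p \in R_{n+1}$ and let $B \in p$. By definition of $R_{n+1}$, there exists $w' \in S_{n+1}$ such that $\{w'(\vec y) : \vec y \in \mathbb{A}^{n+1}\} \subseteq B$. I would then define $w \in S_n$ by taking $w'$ and replacing every occurrence of $v_{n+1}$ by $v_n$ (any other $v_i$ with $i \leq n$ works equally well; alternatively, since $\mathbb{A}$ is nonempty, one could substitute a fixed letter $a \in \mathbb{A}$ for $v_{n+1}$). Since $w' \in S_{n+1}$, each of $v_1, \ldots, v_n$ occurs in $w'$ and hence in $w$, so $w \in S_n$.

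The key observation is that for any $\vec x = (x_1, x_2, \ldots, x_n) \in \mathbb{A}^n$, substituting $\vec x$ into $w$ yields the same word as substituting $\vec y = (x_1, x_2, \ldots, x_n, x_n) \in \mathbb{A}^{n+1}$ into $w'$. Thus $w(\vec x) = w'(\vec y) \in B$, and so $\{w(\vec x) : \vec x \in \mathbb{A}^n\} \subseteq B$, witnessing that $p \in R_n$.

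I don't anticipate any substantive obstacle: the argument is essentially a diagonal identification of two variables, and the only thing to check is that the resulting word still lies in $S_n$ (which it does because $v_1, \ldots, v_n$ all occur). Everything else is a direct computation from Definition~\ref{defnvar}(b) and Definition~\ref{defRn}.
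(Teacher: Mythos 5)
Your proof is correct and matches the paper's argument exactly: the paper defines $u = w(v_1,v_2,\ldots,v_n,v_n)$, which is precisely your construction of replacing each occurrence of $v_{n+1}$ by $v_n$, and then observes $u(\vec x)=w(x_1,\ldots,x_n,x_n)\in B$. Your remark that substituting a fixed letter $a\in\A$ for $v_{n+1}$ would also work is likewise valid, though not what the paper does.
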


\begin{proof} Let $p\in R_{n+1}$ and let $B\in p$. Pick
$w\in S_{n+1}$ such that $\{w(\vec x):\vec x\in \A^{n+1}\}\subseteq B$.
Define $u\in S_n$ by $u=w(v_1,v_2,\ldots,v_n,v_n)$. Then given
$\vec x\in \A^n$, $u(\vec x)=w(x_1,x_2,\ldots,x_n,x_n)\in B$. \end{proof}

\begin{lemma}\label{HJbig} For each $r,n\in\ben$ there exists $m\in\ben$ such 
that for all $k\geq m$, if $\varwordspl k0$ is 
$r$-colored, there exists $w\in \varwordspl kn$ such that 
$\{w(\vec x):\vec x\in \A^n\}$ is monochromatic.\end{lemma}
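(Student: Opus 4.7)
The plan is to derive the lemma from Theorem~\ref{ndimHJ} by a two-step compactness-plus-padding argument, producing first a uniform bounded-length version of $n$-variable Hales--Jewett and then upgrading to the exact-length statement.

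First I would show that there exists $L_0\in\ben$, depending only on $r$ and $n$, such that every $r$-coloring of $S_0$ admits some $w\in\bigcup_{j\leq L_0}\varwordspl jn$ with $\{w(\vec x):\vec x\in\A^n\}$ monochromatic. This is a standard compactness argument in the compact product space $Y=\{1,\ldots,r\}^{S_0}$. For each $L$, let $G_L$ be the set of $\chi\in Y$ for which such a $w$ of length at most $L$ exists; since this condition depends only on the restriction of $\chi$ to the finite set $\bigcup_{j\leq L}\varwordspl j0$, the set $G_L$ is clopen, and clearly $G_L\subseteq G_{L+1}$. Theorem~\ref{ndimHJ} asserts that $\bigcup_L G_L=Y$, so compactness forces $G_{L_0}=Y$ for some $L_0$.

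Second, set $m=L_0$, fix some $a_0\in\A$, and let $k\geq m$. Given an $r$-coloring $\chi$ of $\varwordspl k0$, define an auxiliary $r$-coloring $\chi''$ on $\bigcup_{j\leq L_0}\varwordspl j0$ by
\[
\chi''(u)=\chi(u\cdot a_0^{k-|u|}),
\]
extending arbitrarily to the rest of $S_0$. The first step yields some $j\leq L_0$ and $w_0\in\varwordspl jn$ with $\{\chi''(w_0(\vec x)):\vec x\in\A^n\}$ a singleton. Let $w=w_0\cdot a_0^{k-j}$. Since $w_0$ already contains every variable $v_i$, appending fixed letters from $\A$ leaves $w$ in $\varwordspl kn$. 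For each $\vec x\in\A^n$,
\[
\chi(w(\vec x))=\chi(w_0(\vec x)\cdot a_0^{k-j})=\chi''(w_0(\vec x)),
\]
which is independent of $\vec x$, giving the desired monochromaticity.

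I do not anticipate any substantive obstacle; the only point to watch is that the compactness step yields merely a uniform bound on length (not an exact length), which is precisely why the one-sided padding by $a_0^{k-j}$ is needed to promote the conclusion to the exact length $k$ required by the statement.
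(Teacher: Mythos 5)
Your proof is correct and takes essentially the same approach as the paper: finitize Theorem~\ref{ndimHJ} by a compactness argument, then promote to exact length $k$ by right-padding with a fixed letter from $\A$. The only minor difference is in the form of the finitization step (the paper extracts an exact length $m$ via a ``bad coloring for every $m$'' contradiction and always pads by $c^{k-m}$; you extract a uniform bound $L_0$ via a clopen-cover argument and pad by $a_0^{\,k-|w_0|}$), which is inessential.
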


\begin{proof} Let $r,n\in \ben$. By Theorem \ref{ndimHJ}, whenever
$S_0$ is $r$-colored, there exists $w\in S_n$ such that $\{w(\vec x):
\vec x\in\bea^n\}$ is monochromatic. As in the proof of Theorem \ref{psequiv}, pick $m\in\ben$ such that 
whenever $\varwordspl m0$ is $r$-colored, there exists $w\in \varwordspl mn$ such that
$\{w(\vec x):\vec x\in \A^n\}$ is monochromatic. Let $k>m$ and pick
$c\in \A$. Let $\varphi:\varwordspl k0\to\nhat{r}$ and define $\psi:\varwordspl m0\to\nhat{r}$
by $\psi(u)=\varphi(uc^{k-m})$. Pick $w\in \varwordspl mn$ such
that $\psi$ is constant on $\{w(\vec x):\vec x\in \A^n\}$. Define
$u\in \varwordspl kn$ by $u=wc^{k-m}$. Then $\varphi$ is constant 
on $\{u(\vec x):\vec x\in \A^n\}$.\end{proof}

\begin{theorem}\label{clKRn} $\cl K(\beta S_0)\subsetneq \bigcap_{n=1}^\infty R_n$.
\end{theorem}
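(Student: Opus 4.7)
The plan is to treat the two inclusions separately. The inclusion $\cl K(\beta S_0)\subseteq \bigcap_{n=1}^\infty R_n$ is immediate: by Theorem \ref{Rnideal}, each $R_n$ is a closed two-sided ideal of $\beta S_0$, so it contains $K(\beta S_0)$ and therefore also $\cl K(\beta S_0)$. For strict inclusion I will construct a point $p\in \bigcap_{n=1}^\infty R_n$ possessing some member that is not piecewise syndetic in $S_0$; since $p\in\cl K(\beta S_0)$ if and only if every member of $p$ is piecewise syndetic, this suffices.

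The construction begins by fixing a sequence $l_1<l_2<\cdots$ in $\ben$ with $l_{n+1}-l_n\to\infty$, so that $T=\{l_n:n\in\ben\}$ fails to be piecewise syndetic in $(\ben,+)$, and setting
$$B_0=\bigcup_{n=1}^\infty \varwordspl{l_n}{0}\,,$$
the set of words in $S_0$ whose length lies in $T$. The length map $\pi\colon S_0\to(\ben,+)$ is a semigroup homomorphism with $\pi[B_0]=T$, and piecewise syndeticity is preserved by semigroup homomorphisms, so $B_0$ is not piecewise syndetic in $S_0$.

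The main step is to show that for each $n\in\ben$ there exists $p_n\in R_n$ with $B_0\in p_n$. Writing $\mathcal{F}_n=\{C\subseteq S_0: (\exists w\in S_n)(\{w(\vec x):\vec x\in\A^n\}\subseteq C)\}$, this family is upward closed and $R_n=\{p\in\beta S_0: p\subseteq \mathcal{F}_n\}$. I will verify that $\mathcal{F}_n$ restricted to subsets of $B_0$ is partition regular: given any $r$-coloring $B_0=C_1\cup\cdots\cup C_r$, invoke Lemma \ref{HJbig} with these values of $r$ and $n$ to obtain a threshold $m$, pick $j$ large enough that $l_j\geq m$, and observe that the induced $r$-coloring of $\varwordspl{l_j}{0}\subseteq B_0$ admits a monochromatic $\{w(\vec x):\vec x\in\A^n\}$, forcing some $C_i\in\mathcal{F}_n$. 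A routine filter-extension argument, applied to $\{B_0\}\cup\{S_0\setminus A:A\subseteq B_0,\ A\notin\mathcal{F}_n\}$, then yields an ultrafilter $p_n$ on $S_0$ with $B_0\in p_n$ and $p_n\subseteq\mathcal{F}_n$, i.e., $p_n\in R_n\cap \overline{B_0}$.

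To finish, the sets $R_n\cap\overline{B_0}$ form a decreasing sequence (by Theorem \ref{Rndecrease}) of nonempty closed subsets of the compact space $\beta S_0$, so their intersection is nonempty by compactness. Any $p$ in that intersection satisfies $p\in\bigcap_{n=1}^\infty R_n$ and $B_0\in p$, whence $p\notin\cl K(\beta S_0)$. The main obstacle is the partition-regularity step for $\mathcal{F}_n$ restricted to $B_0$, which crucially combines Lemma \ref{HJbig} with the unboundedness of $\{l_n\}$; the rest is a straightforward application of compactness.
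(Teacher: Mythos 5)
Your proof is correct and follows the same overall strategy as the paper: pick a set $B_0$ of words whose lengths form a thin set, show $\overline{B_0}\cap R_n\neq\emp$ for every $n$ via Lemma \ref{HJbig} and a partition-regularity/ultrafilter argument, and conclude by compactness. The one genuine point of departure is how non-piecewise-syndeticity of $B_0$ is established. The paper fixes $B=\bigcup_{k=1}^\infty\varwordspl{k!}{0}$ and verifies directly from the finitary characterization of piecewise syndeticity (\cite[Theorem 4.40]{HS}) that $B$ is not piecewise syndetic, via a concrete computation with lengths $b^r, b^{2r}$. You instead allow any $B_0=\bigcup_{n}\varwordspl{l_n}{0}$ with $l_{n+1}-l_n\to\infty$ and push everything through the length homomorphism $\pi:S_0\to(\ben,+)$, appealing to the fact that the image of a piecewise syndetic set under a \emph{surjective} homomorphism is piecewise syndetic (equivalently, that the continuous extension $\widetilde\pi:\beta S_0\to\beta\ben$ is an onto homomorphism satisfying $\widetilde\pi[K(\beta S_0)]=K(\beta\ben)$). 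This is cleaner and more general than the paper's ad hoc check, and it makes the role of the thin length set transparent; you should state the surjectivity hypothesis explicitly and cite the relevant result, since ``preserved by semigroup homomorphisms'' is false without it. Your filter-extension step is exactly the content of \cite[Theorem 3.11]{HS}, which the paper cites there, so the two proofs otherwise coincide.
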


\begin{proof} That $\cl K(\beta S_0)\subseteq \bigcap_{n=1}^\infty R_n$ is
an immediate consequence of Theorem \ref{Rnideal}.

Let $B=\bigcup_{k=1}^\infty \varwordspl {k!}0$.  We claim first that $B$ 
is not piecewise syndetic, so that $\overline B\cap \cl K(\beta S_0)=\emp$.
We need to show that there is no
$G\in\pf(S_0)$ such that for all $F\in\pf(S_0)$
there exists $x\in S_0$ such that $Fx\subseteq \bigcup_{t\in G}t^{-1}B$.
Suppose we have such $G$ and let $m=\max\{|t|\,:\,t\in G\}$, let $r=m!$, 
pick $b\in \A$, and let $F=\{b^r,b^{2r}\}$.
Pick $t,s\in G$ and $x\in S_0$ such that $tb^rx\in B$ and $sb^{2r}x\in B$.
Then $|tb^rx|=n!$ for some $n>m$ and $|sb^{2r}x|=k!$ for some $k$.
Now $k!=|sb^{2r}x|=|sb^r|+n!-|t|>n!$ so $k!\geq (n+1)!$ so
$|sb^r|+n!-|t|\geq (n+1)!$. Thus $n\cdot n!=(n+1)!-n!<|sb^r|=|s|+r\leq m+m!<n+n!$,
a contradiction.

Now let $n\in\ben$. We will show that $\overline B\cap R_n\neq\emp$. 
Let ${\cal R}=\{D\subseteq S_0:$ whenever $D$ is finitely colored,
there exists $w\in S_n$ such that $\{w(\vec x):\vec x\in \A^n\}$ is
monochromatic$\}$. Notice that ${\mathcal R}$ is partition regular. It suffices to show that $B\in {\cal R}$, for then
by \cite[Theorem 3.11]{HS} there exists $p\in\beta S_0$ such that
$B\in p$ and $p\subseteq{\cal R}$ so that $p\in\overline B\cap R_n$.
So let $r\in\ben$ and let $\varphi:B\to\nhat{r}$. Pick $m$ as guaranteed
by Lemma \ref{HJbig} for $r$ and $n$. The $\varphi$ is an $r$-coloring of $\varwordspl {m!}0$
so pick $w\in \varwordspl{m!}{n}$ such that 
$\{w(\vec x):\vec x\in \A^n\}$ is monochromatic.

Since $\{\overline B\cap R_n:n\in\ben\}$ is a collection of closed
sets with the finite intersection property, we have that
$\overline B \cap \bigcap_{n=1}^\infty R_n\neq\emp$.\end{proof}

\begin{theorem}[Deuber, Pr\"omel, Rothschild, and Voigt]\label{thDPRV}
Let $n,r\in\ben$. There exist $m\in\ben$ and $C_{n,r}\subseteq \varwordspl m0$ such that
\begin{itemize}
\item[(1)] there does not exist $w\in \varwordspl{m}{n+1}$ with
$\{w(\vec x):\vec x\in \A^{n+1}\}\subseteq C_{n,r}$ and
\item[(2)] whenever $C_{n,r}$ is $r$-colored, there exists
$w\in \varwordspl{m}{n}$ such that $\{w(\vec x):\vec x\in \A^n\}$ is
monochromatic.
\end{itemize}\end{theorem}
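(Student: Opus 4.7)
The plan is to prove Theorem~\ref{thDPRV} by induction on $n$, using Hales-Jewett (in the form of Lemma~\ref{HJbig}) together with a partite-style amalgamation that builds $C_{n,r}$ from copies of $C_{n-1,r'}$ for a suitably enlarged color parameter $r'$. The base case $n=0$ is essentially trivial: take $m=1$ and $C_{0,r}=\{a\}$ for any fixed $a\in\A$. Property~(2) is automatic because $\A^0$ is a singleton, so $\{w(\vec x):\vec x\in\A^0\}=\{w\}$ for every $w$. Property~(1) follows from $|\A|\geq 2$ since the only $1$-variable word of length $1$ is $v_1$, whose instance set is the whole of $\A$ and hence not contained in the singleton $\{a\}$.

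For the inductive step, suppose the theorem is established for $n-1$ with every color parameter, and fix $r$. First apply Lemma~\ref{HJbig} to pick $m_1$ such that any $r$-coloring of $\varwordspl{m_1}{0}$ admits a monochromatic $n$-variable word of length $m_1$. Then choose a larger color parameter $r'$, depending on $|\A|$, $m_1$, and $r$, and apply the inductive hypothesis to obtain $m_0$ and $C_{n-1,r'}\subseteq\varwordspl{m_0}{0}$ satisfying the theorem for $n-1$. View length-$m$ words (for $m=m_0\cdot N$ with $N$ a large Hales-Jewett-type parameter at the block level) as sequences of $N$ blocks of length $m_0$, and define $C_{n,r}$ by imposing on each word a ``block compatibility'' condition phrased in terms of $C_{n-1,r'}$ that ties the block structure to the local combinatorial subspaces one wishes to preserve or forbid.

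To verify property~(1), any $(n+1)$-variable word of length $m$ should, when restricted to an appropriately chosen set of blocks, induce an $n$-variable instance set inside $C_{n-1,r'}$ at the block level, contradicting property~(1) at stage $n-1$. To verify property~(2), an $r$-coloring of $C_{n,r}$ pushes down via the block compatibility to an $r'$-coloring of a length-$m_0$ substructure, and the inductive property~(2) together with a block-level Hales-Jewett application (enabled by the choice of $m_1$ and $N$) produces a monochromatic $n$-variable word of length $m$ inside $C_{n,r}$.

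The main obstacle will be the precise design of the block compatibility condition. It must be restrictive enough that every $(n+1)$-variable word of length $m$ leaves a detectable trace of an $n$-variable instance set at the block level (so that property~(1) follows from the inductive hypothesis), yet permissive enough that an $r$-coloring of $C_{n,r}$ still induces a usable $r'$-coloring of a block-level structure to which the inductive hypothesis applies (so that property~(2) follows). Balancing these two competing requirements by a careful choice of $N$, $r'$, and the compatibility condition is the combinatorial core of the Deuber-Pr\"omel-Rothschild-Voigt argument, and it is the step I would expect to require the most care to carry out in detail.
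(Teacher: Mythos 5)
The paper does not prove this theorem: its stated proof is the single sentence citing \cite{DPRV} (``This is the `main theorem' of \ldots''). There is therefore no argument in the paper for your proposal to be measured against; the comparison has to be against the statement itself.

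As a standalone argument, your proposal has a genuine gap, and you name it yourself. The inductive skeleton you describe is plausible and broadly in the spirit of partite constructions for restricted Hales--Jewett statements: encode length-$m_0\cdot N$ words as $N$ blocks, define $C_{n,r}$ by a block-level condition referencing $C_{n-1,r'}$, deduce (1) from the inductive (1) by extracting a block-level $n$-variable trace from any $(n+1)$-variable word, and deduce (2) by pushing an $r$-coloring down to an $r'$-coloring of the block structure and applying the inductive (2) together with a block-level Hales--Jewett application. But the one load-bearing object in this outline, the ``block compatibility condition'' that actually defines $C_{n,r}$, is never written down, and you say explicitly that designing it is ``the step I would expect to require the most care to carry out in detail.'' Without a concrete definition neither (1) nor (2) can be checked: a condition permissive enough for the coloring push-down in (2) might admit an $(n+1)$-dimensional combinatorial subspace that (1) must forbid, and a condition restrictive enough for (1) might destroy the structure needed for (2). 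Resolving exactly that tension is the entire content of the Deuber--Pr\"omel--Rothschild--Voigt construction, so a plan that identifies the obstacle without overcoming it is a strategy sketch, not a proof. A secondary, much smaller point: your ``base case'' is $n=0$, but the theorem is stated for $n\in\ben$ (so $n\geq 1$), and your $n=0$ verification of (1) additionally assumes $|\A|\geq 2$; if you keep $n=0$ as a bootstrap you should note that assumption, or else start the induction at $n=1$.
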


\begin{proof} This is the ``main theorem" of \cite{DPRV}.\end{proof}

\begin{theorem}\label{Rnproper} Let $n\in\ben$. Then $R_{n+1}\subsetneq R_n$.
\end{theorem}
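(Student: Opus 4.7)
The inclusion $R_{n+1}\subseteq R_n$ is Theorem \ref{Rndecrease}, so the plan is to exhibit a single $p\in R_n\setminus R_{n+1}$. I will do this by finding a set $C\subseteq S_0$ that is rich enough to satisfy the $n$-variable Hales--Jewett property under arbitrary finite colorings but admits no $w\in S_{n+1}$ all of whose instances lie inside it. Such a $C$ then lifts to a separating ultrafilter $p$.

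The construction of $C$ uses Theorem \ref{thDPRV}. For each $r\in\ben$ I would pick $m_r\in\ben$ and $C_{n,r}\subseteq\varwordspl{m_r}0$ satisfying the two properties of that theorem. A short padding step---fix $c\in\A$ and replace $C_{n,r}$ by $\{uc^k:u\in C_{n,r}\}\subseteq\varwordspl{m_r+k}0$---preserves both properties (preservation of (1) relies on $|\A|\geq 2$ to force the last $k$ positions of a candidate $w\in\varwordspl{m_r+k}{n+1}$ to be constantly $c$, so all its variables lie in the first $m_r$ positions), and thus lets me arrange $m_1<m_2<\cdots$ inductively. Set $C=\bigcup_{r\in\ben}C_{n,r}$. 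For any $r$-coloring of $C$, restricting to $C_{n,r}$ and applying property (2) produces $w\in S_n$ with $\{w(\vec x):\vec x\in\A^n\}\subseteq C_{n,r}\subseteq C$ monochromatic. Conversely, any hypothetical $w\in S_{n+1}$ with $\{w(\vec x):\vec x\in\A^{n+1}\}\subseteq C$ would have all its instances of common length $|w|$, so the image would lie in $C\cap\varwordspl{|w|}0$; since the $m_r$ are distinct, this intersection is either empty or equals a single $C_{n,r}$, contradicting property (1).

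To finish, let $\mathcal R$ be the family of all $D\subseteq S_0$ such that every finite coloring of $D$ admits $w\in S_n$ with $\{w(\vec x):\vec x\in\A^n\}$ monochromatic in $D$. As noted in the proof of Theorem \ref{clKRn}, $\mathcal R$ is partition regular, and the previous paragraph shows $C\in\mathcal R$. By \cite[Theorem 3.11]{HS}, there exists $p\in\beta S_0$ with $C\in p$ and $p\subseteq\mathcal R$. The $r=1$ case of membership in $\mathcal R$ shows $p\in R_n$, while $C\in p$ combined with the second property of $C$ shows $p\notin R_{n+1}$. The main technical obstacle is the padding step that forces distinct $m_r$: without distinct lengths, two sets $C_{n,r_1},C_{n,r_2}\subseteq\varwordspl m 0$ could together admit a bad $(n{+}1)$-variable word whose instances straddle both even though neither piece individually permits one.
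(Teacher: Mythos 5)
Your proof is correct and follows essentially the same route as the paper's: form a union $E=\bigcup_r C_{n,r}$ of DPRV sets supported on pairwise distinct lengths, observe that the length constraint rules out any $w\in S_{n+1}$ with $\{w(\vec x):\vec x\in\A^{n+1}\}\subseteq E$, show $E\in\mathcal R$ by restricting a given $k$-coloring to a $C_{n,r}$ with $r\geq k$, and extract a separating ultrafilter via the partition-regularity of $\mathcal R$ and \cite[Theorem 3.11]{HS}. The single point of divergence is the mechanism used to force distinct lengths: the paper passes to a subsequence $\langle r_i\rangle$ along which $m(r_i)$ is strictly increasing (tacitly using that $m(r)$ is unbounded, which for $|\A|\geq 2$ follows from a pigeonhole count against $|\varwordspl{m}{0}|$), whereas you pad each $C_{n,r}$ by a constant suffix $c^k$. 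Your padding argument is valid and does preserve both DPRV properties, and you are right that it requires $|\A|\geq 2$; this hypothesis is in fact also needed, though not stated, for the paper's subsequence extraction to be possible.
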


\begin{proof} For each $r\in\ben$ pick $m(r)$ and $C_{n,r}$ as guaranteed for $r$ and $n$ by
Theorem \ref{thDPRV}. Choose an increasing sequence $\langle r_i\rangle_{i=1}^\infty$ such that
the sequence $\langle m(r_i)\rangle_{i=1}^\infty$ is strictly increasing
and let $D_i=C_{n,r_i}$ for each $i$.  Let $E=\bigcup_{i=1}^\infty D_i$.
There does not exist $w\in S_{n+1}$ such that $\{w(\vec x):\vec x\in \A^{n+1}\}
\subseteq E$ because any such $w$ would have to have length $m(r_i)$ for some
$i$, and then one would have $\{w(\vec x):\vec x\in \A^{n+1}\}\subseteq C_{n,r_i}$.
Thus $\overline{E}\cap R_{n+1}=\emp$.

As in the proof of Theorem \ref{clKRn}, let
${\cal R}=\{D\subseteq S_0:$ whenever $D$ is finitely colored,
there exists $w\in S_n$ such that $\{w(\vec x):\vec x\in \A^n\}$ is
monochromatic$\}$. It suffices to show that $E\in{\cal R}$ so let
$k\in\ben$ and let $\varphi:E\to \nhat{k}$.  Pick $i$ such that
$r_i\geq k$. Then $\varphi_{|D_i}:D_i\to\nhat{r_i}$  so
pick $w\in \varwordspl{m(r_i)}{n}$ such that $\varphi$ is constant
on $\{w(\vec x):\vec x\in \A^n\}$.\end{proof}

\section{A simpler proof of an infinitary extension}\label{infext}

We set out in this section to provide a proof of \cite[Theorem 2.12]{CHS}
applied to the simpler description of $n$-variable words which we have
been using.  As defined in this paper, what is called the set of $n$-variable
words in \cite{CHS}, is what we call the set of strong $n$-variable words
where we take $D=E=\{e\}$ in \cite{CHS}, take the function $T_e$ to
be the identity, and let $v_n=(e,\nu_n)$ for $n\in\ben$. As we remarked earlier, in \cite[Theorem 5.1]{CHS} it was
shown that the version of the Graham-Rothschild that we are using here is sufficient 
to derive the full original version as used in \cite{CHS} and  \cite{GR}. Using
that simplified notion, Corollary \ref{infGR} implies \cite[Theorem 2.12]{CHS}
and has a vastly simpler proof.

The first few results apply to an arbitrary nonempty alphabet $\A$. For the results
of this section, except for Corollary \ref{corGR}, we do not need to assume that $\A$ is finite.

\begin{definition} \label{defTn} For $n\in\ben$, $T_n$ is the free semigroup
over $\A\cup\{v_1,v_2,\ldots,v_n\}$. Also we set $T_0=S_0$ and $T=\bigcup_{i\in \omega}T_i.$ \end{definition} 

Note that for $n\in\ben$, $S_n$ is a proper subset of $T_n$ and that $T_n\subseteq T_{n+1}$.

For $u=l_1l_2\cdots l_m\in T$ with $|u|=m$ we
define $h_u:T\to T$ by stating that  $h_u(w)$ is the result of replacing each occurrence of $v_i$ in $w$ by $l_i$ for
$i\in \nhat{m}$. (Thus, if $w\in S_m$, $h_u(w)=w(u)$ as defined in Definition \ref{defnvar}.) Denote also by $h_u$ the continuous extension of $h_u$
taking $\beta T$ to $\beta T$. Observe that, if $u\in \varwords mk$, then $h_u[T_m]\subseteq T_k$.

\begin{definition} For $\alpha\in \ben\cup\{\omega\}$, a reductive sequence of height $\alpha$ over $\A$ is 
a sequence of minimal idempotents $\langle p_t\rangle_{t<\alpha}$ with $p_t\in E\big(K(\beta S_t)\big)$ 
such  that for each $i,j\in\omega$ with  $0\leq j<i<\alpha$ one has $p_i\leq p_j$ and 
$h_u(p_i)=p_j$ for each $u\in\varwords ij$. \end{definition} 

\begin{lemma}\label{Kequal} Let $i\in\omega$. Then $K(\beta S_i)=K(\beta T_i)$.
\end{lemma}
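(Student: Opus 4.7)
The case $i=0$ is immediate from $T_0=S_0$, so I would focus on $i\geq 1$. My plan has three steps. First, I would verify that $S_i$ is a two-sided ideal of the discrete semigroup $T_i$: for any $w\in S_i$ and $u\in T_i$, both $uw$ and $wu$ lie in $T_i$ and satisfy $|uw|_{v_j},|wu|_{v_j}\geq |w|_{v_j}\geq 1$ for every $j\in\nhat{i}$, so $uw,wu\in S_i$.

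Second, I would transfer this ideal property to $\beta T_i$. Identifying $\beta S_i$ with the closure of $S_i$ in $\beta T_i$ (valid because $S_i$ sits inside $T_i$ with the discrete topology), the fact that $S_i$ is a two-sided ideal of $T_i$ upgrades, via the standard Stone-\v Cech continuity machinery recalled in the introduction (right-continuity of $\rho_q$ for $q\in\beta T_i$ together with continuity of $\lambda_s$ for $s\in T_i$, since $T_i$ lies in the topological center), to the statement that $\beta S_i$ is a closed two-sided ideal of the compact right topological semigroup $\beta T_i$.

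Third, I would invoke the general algebraic fact \cite[Theorem 1.65]{HS}: if $I$ is an ideal of a semigroup $S$ and $K(I)$ exists, then $K(S)$ exists and $K(S)=K(I)$. Both $\beta S_i$ and $\beta T_i$ possess smallest ideals, being compact right topological, and step two supplies the hypothesis that $\beta S_i$ is an ideal of $\beta T_i$; the general fact therefore yields $K(\beta S_i)=K(\beta T_i)$, as required. I do not anticipate any real obstacle: the argument is an assembly of three standard ingredients, and the only concrete thing to check is the trivial observation in step one that concatenation cannot destroy the presence of any variable.
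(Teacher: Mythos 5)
Your proof is correct and follows the same route as the paper: verify that $S_i$ is a two‑sided ideal of $T_i$, deduce via the Stone--\v Cech machinery (the paper simply cites \cite[Corollary 4.18]{HS}) that $\beta S_i$ is an ideal of $\beta T_i$, note $K(\beta T_i)\subseteq\beta S_i$, and conclude with \cite[Theorem 1.65]{HS}. One small caveat: your paraphrase of Theorem 1.65 puts the existence hypothesis on $K(I)$ rather than on $K(S)$ (the theorem actually assumes $T$ is a subsemigroup of $S$ with $T\cap K(S)\neq\emptyset$ and concludes $K(T)=T\cap K(S)$), but this is harmless here because $\beta S_i$ and $\beta T_i$ are both compact right topological semigroups, so both smallest ideals exist and the application goes through exactly as you describe.
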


\begin{proof} 
We have that $S_i$ is an ideal of $T_i$ so by \cite[Corollary 4.18]{HS}
$\beta S_i$ is an ideal of $\beta T_i$. Therefore
$K(\beta T_i)\subseteq \beta S_i$ so that by 
\cite[Theorem 1.65]{HS} $K(\beta S_i)=K(\beta T_i)$. 
\end{proof}

\begin{lemma}\label{smaller} Let $k,m\in\omega$ with $k<m$ and
let $p\in E(\beta S_k)$. There exists $q\in E\big(K(\beta S_m)\big)$
such that $q<p$.
\end{lemma}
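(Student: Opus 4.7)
The plan is to embed $p$ as an idempotent of the larger compact right topological semigroup $\beta T_m$, apply the standard fact that any idempotent sits above a minimal idempotent, and then argue that this minimal idempotent in fact lies in $K(\beta S_m)$ and is distinct from $p$.

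First I would note that since $S_k\subseteq T_k\subseteq T_m$, we have $\beta S_k\subseteq\beta T_m$, so $p$ may be regarded as an idempotent in $\beta T_m$. By \cite[Theorem 1.60]{HS}, applied to the compact right topological semigroup $\beta T_m$ and the idempotent $p$, there exists a minimal idempotent $q$ of $\beta T_m$ with $q\leq p$, i.e.\ $pq=qp=q$.

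Next I would use Lemma \ref{Kequal} with index $m$ to get $K(\beta T_m)=K(\beta S_m)$. Since $q$ is a minimal idempotent of $\beta T_m$, we have $q\in E\big(K(\beta T_m)\big)=E\big(K(\beta S_m)\big)$, which gives the desired conclusion on $q$ apart from strict inequality.

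To upgrade $q\leq p$ to $q<p$, I would argue that $p\neq q$ by a disjointness argument: $p\in\beta S_k$ means $p$ is an ultrafilter containing the set $S_k$, while $q\in\beta S_m$ contains $S_m$. Since $k<m$, any word in $S_m$ must contain at least one occurrence of $v_{k+1}$, whereas no word in $S_k$ contains $v_{k+1}$; hence $S_k\cap S_m=\emp$. As no ultrafilter can contain two disjoint sets, $p\neq q$, yielding $q<p$.

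There is no real obstacle here; the argument is a short bookkeeping application of \cite[Theorem 1.60]{HS} combined with Lemma \ref{Kequal} and the elementary observation that $S_k$ and $S_m$ are disjoint subsets of $T_m$.
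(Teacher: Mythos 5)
Your proof is correct and follows essentially the same route as the paper's: regard $p$ as an idempotent in $\beta T_m$, take a minimal idempotent $q\leq p$ there, invoke Lemma~\ref{Kequal} to place $q$ in $K(\beta S_m)$, and observe $p\neq q$ because $\beta S_k$ and $\beta S_m$ are disjoint. The only difference is that you spell out the disjointness of $S_k$ and $S_m$ via occurrences of $v_{k+1}$, which the paper leaves implicit.
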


\begin{proof} We have $\beta S_m\cup\beta S_k\subseteq T_m$.
Pick $q\in E\big(K(\beta T_m)\big)$ such that
$q\leq p$. By Lemma \ref{Kequal}, $q\in K(\beta S_m)$ and
since $\beta S_k\cap\beta S_m=\emp$, $q\neq p$.\end{proof}

\begin{lemma}\label{varik} Let $\alpha\in\ben\cup\{\omega\}$ and let $\langle p_t\rangle_{t<\alpha}$
be a reductive sequence of height $\alpha$. 
For each $t<\alpha$, $p_t\in E\big(K(\beta \widetilde S_t)\big)$.
\end{lemma}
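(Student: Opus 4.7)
The plan is induction on $t$. For $t=0$ the statement is immediate since $\widetilde S_0=S_0$ and $p_0\in E(K(\beta S_0))$ by hypothesis. Assume $p_{t-1}\in E(K(\beta \widetilde S_{t-1}))$, so in particular $\widetilde S_{t-1}\in p_{t-1}$; the aim is $p_t\in E(K(\beta \widetilde S_t))$.

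The key elementary observation is that if $x\in \widetilde S_{t-1}$ and $y\in S_t$, then $xy\in \widetilde S_t$. Indeed $x\in T_{t-1}$ contains no occurrence of $v_t$, so the first occurrence of $v_t$ in $xy$ lies in $y$, strictly after all of $x$; the first occurrences of $v_1,\ldots,v_{t-1}$ in $xy$ coincide with those in $x$ and are therefore already in the correct order.

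To prove $\widetilde S_t\in p_t$, interpret $p_t\leq p_{t-1}$ inside the common ambient semigroup $\beta T$, which gives $p_t=p_{t-1}p_t$. Suppose for contradiction that $\widetilde S_t\notin p_t$; then $B:=T\setminus \widetilde S_t\in p_t$, and from $B\in p_{t-1}p_t$ we get $\{x\in T:x^{-1}B\in p_t\}\in p_{t-1}$. Intersecting with $\widetilde S_{t-1}\in p_{t-1}$ produces some $x\in \widetilde S_{t-1}$ with $x^{-1}B\in p_t$; since $S_t\in p_t$, choose $y\in x^{-1}B\cap S_t$. The observation gives $xy\in \widetilde S_t$, yet $xy\in B=T\setminus\widetilde S_t$, a contradiction. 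So $\widetilde S_t\in p_t$.

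It remains to show $p_t\in K(\beta \widetilde S_t)$. A concatenation of strong $t$-variable words is strong, so $\widetilde S_t$ is a subsemigroup of $T$; hence the operation on $\beta \widetilde S_t$ coming from Stone-\v Cech is the restriction of the operation on $\beta T$, and $p_t$ is idempotent in $\beta \widetilde S_t$. By \cite[Theorem 1.60]{HS}, pick a minimal idempotent $q\in \beta \widetilde S_t$ with $q\leq p_t$; the same inequality holds in $\beta S_t$, and the minimality of $p_t$ there forces $q=p_t$. Hence $p_t\in K(\beta \widetilde S_t)$. The principal obstacle is that $\widetilde S_t$ is not an ideal of $S_t$ or $T_t$, so a direct appeal to Lemma \ref{Kequal} is unavailable; the trick is to combine the reductive-sequence relation $p_t\leq p_{t-1}$ with the fact that $\widetilde S_t$ remains at least a subsemigroup of $T$.
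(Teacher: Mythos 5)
Your proof is correct, and the heart of it—the induction establishing $\widetilde S_t\in p_t$—is exactly the paper's argument: your observation that $x\in\widetilde S_{t-1}$ and $y\in S_t$ give $xy\in\widetilde S_t$ is precisely what the paper records as $S_{t+1}\subseteq w^{-1}\widetilde S_{t+1}$ for $w\in\widetilde S_t$, and both proofs finish with $p_t=p_{t-1}p_t$; running it by contradiction rather than directly is cosmetic. Where you diverge is in passing from $\widetilde S_t\in p_t$ to $p_t\in K(\beta\widetilde S_t)$. You take a minimal idempotent $q\leq p_t$ inside $\beta\widetilde S_t$ and use the minimality of $p_t$ in $\beta S_t$ to force $q=p_t$; this is valid, since $\beta\widetilde S_t$ is a compact right topological subsemigroup of $\beta S_t$ and the ordering of idempotents is preserved under inclusion. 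The paper instead establishes once and for all that $K(\beta\widetilde S_t)=\beta\widetilde S_t\cap K(\beta S_t)$, which reduces the whole lemma to showing $\widetilde S_t\in p_t$. To do that the paper uses a fact your closing remark gets wrong: $\widetilde S_t$ \emph{is} a right ideal of $S_t$ (for $w\in\widetilde S_t$ and $u\in S_t$, the first occurrences of $v_1,\ldots,v_t$ in $wu$ are those in $w$, hence already in the required order), so $\beta\widetilde S_t$ is a right ideal of $\beta S_t$, meets $K(\beta S_t)$, and \cite[Theorem 1.65]{HS} applies. You are right that it is not a two-sided ideal—hence Lemma \ref{Kequal} itself does not apply verbatim—but your workaround via minimality inheritance, while correct, was not forced; the right-ideal route is available and is what the paper uses.
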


\begin{proof} Since each $p_t$ is an idempotent, it suffices to show that $p_t\in K(\beta  \widetilde S_t)$. 
Given $t<\alpha$, $\widetilde S_t$ is a right ideal of 
$S_t$ so $\beta \widetilde S_t$ is a right ideal of 
$\beta S_t$ and thus $K(\beta \widetilde S_t)\cap K(\beta S_t)\neq\emp$ so
that by \cite[Theorem 1.65]{HS}, $K(\beta \widetilde S_t)=\beta \widetilde S_t\cap K(\beta S_t)$.
Thus it suffices to show that each $p_t\in \beta \widetilde S_t$.
We proceed by induction on $t$. For $t=0$, we have
$p_0\in\beta S_0=\beta \widetilde S_0$. Now assume that
$t+1<\alpha$ and $p_t\in \beta \widetilde S_t$. We need to show
that $\widetilde S_{t+1}\in p_{t+1}$. 
We begin by observing that if $w\in \widetilde S_t$ then $S_{t+1}\subseteq w^{-1}\widetilde S_{t+1}$ from which it follows that $\widetilde S_t\subseteq \{w\in T_{t+1}:w^{-1}\widetilde S_{t+1} \in p_{t+1}\}.$ Now since $\widetilde S_t\in p_t$ we have that
$\{w\in T_{t+1}:w^{-1}\widetilde S_{t+1} \in p_{t+1}\}\in p_t$ or equivalently that $\widetilde S_{t+1}\in p_tp_{t+1}.$ The result now follows from the fact that $p_{t+1}\leq p_t$ and hence in particular $p_{t+1}=p_tp_{t+1}.$ \end{proof}

We now introduce some new notation. 
We fix a nonempty (possibly infinite) alphabet $\A$ together with an infinite sequence of symbols 
$\{x_1,x_2,x_3,\ldots \}$ each of which is not a member of $\A\cup\{v_i:i\in\ben\}$. We let
$\A^{(0)}=\A$ and for $m\in\ben$, we let $\A^{(m)}=\A\cup\{x_1,x_2,\ldots,x_m\}$.
For each $m\in \omega$ we let $S^{(m)}$ denote the free semigroup over $\A^{(m)}$. For 
each $i\in \ben $ we let $S^{(m)}_i$ denote the set of all $i$-variable words over the alphabet 
$\A^{(m)}$ and let $\widetilde S^{(m)}_i$ denote the set of all strong $i$-variable words over
$\A^{(m)}$.  $T^{(j)}_i$ will denote the free semigroup of all words over the alphabet $\A^{(j)}\cup \{v_1,v_2,\ldots ,v_i\}$. 
Also, for each $j\in\omega$, let $S^{(j)}_0=\widetilde S^{(j)}_0=T^{(j)}_0=S^{(j)}$, and
let $T^{(j)}=\bigcup_{i=1}^\infty T^{(j)}_i$. Then $T^{(j)}$ is the set of all words
over $\A^{(j)}\cup\{v_i:i\in\ben\}$.

To each $u=u_1u_2\cdots u_m \in T^{(j)}$ with $|u|=m$ we associate a morphism 
$h_u:\bigcup_{j\in\omega}T^{(j)}\rightarrow \bigcup_{j\in\omega}T^{(j)}$ where for each $w\in \bigcup_{j\in\omega}T^{(j)}$,  
$h_u(w)$ is obtained from $w$ by replacing each occurrence of $v_i$ in $w$ by $u_i$ for each $i\in\nhat{m}$.
We also denote by $h_u$ its continuous extension taking $\beta(\bigcup_{j\in\omega} T^{(j)})$ to $\beta(\bigcup_{j\in\omega}T^{(j)})$.
 Also, for each $i,j\in \ben$ we define the morphism, $\tau^{(j)}_i:T^{(j)}\rightarrow T^{(j-1)}$ 
where $\tau^{(j)}_i(w)$ is the word obtained from $w$ by replacing every occurrence of $x_j$ by $v_i$ 
and leaving all other symbols unchanged.
We also denote by $\tau_i^{(j)}$ the continuous extension of $\tau_i^{(j)}$ taking $\beta T^{(j)}$ to $\beta T^{(j-1)}$.
Note that $\tau_i^{(j)}[T_{i-1}^{(j)}]=T_i^{(j-1)}$ and the restriction of
$\tau_i^{(j)}$ to $T_{i-1}^{(j)}$ is an isomorphism onto $T_i^{(j-1)}$.
Consequently the restriction of
$\tau_i^{(j)}$ to $\beta T_{i-1}^{(j)}$ is an isomorphism onto $\beta T_i^{(j-1)}$.

\begin{lemma}\label{Gisgroup} Let $m\in\omega$, let $i\in\ben\setminus\{1\}$, and
assume that $p_{i-1}^{(m)}\in \beta S_{i-1}^{(m)}$ and 
$p_{i-1}^{(m+1)}\in K(\beta S_{i-1}^{(m+1)})$. Then 
\[G_i^{(m)}=p_{i-1}^{(m)}\tau^{(m+1)}_i(p^{(m+1)}_{i-1})\beta S_i^{(m)}\cap \beta S_i^{(m)}\tau_i^{(m+1)}(p^{(m+1)}_{i-1})p_{i-1}^{(m)}\]
is a group contained in $K(\beta S_i^{(m)})$.
 \end{lemma}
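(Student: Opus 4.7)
The natural approach is to exhibit $G_i^{(m)}$ as the intersection of a minimal left ideal and a minimal right ideal of the compact right topological semigroup $\beta S_i^{(m)}$, and then quote the standard fact (recalled in the introduction) that such an intersection is automatically a group contained in $K(\beta S_i^{(m)})$.

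The first step is to verify that $\tau_i^{(m+1)}(p_{i-1}^{(m+1)})$ lies in $K(\beta S_i^{(m)})$. The text already notes that the restriction of $\tau_i^{(m+1)}$ to $\beta T_{i-1}^{(m+1)}$ is an isomorphism onto $\beta T_i^{(m)}$, and isomorphisms of compact right topological semigroups carry smallest two-sided ideals to smallest two-sided ideals. Applying Lemma~\ref{Kequal} at both ends gives
\[
\tau_i^{(m+1)}\bigl[K(\beta S_{i-1}^{(m+1)})\bigr]=\tau_i^{(m+1)}\bigl[K(\beta T_{i-1}^{(m+1)})\bigr]=K(\beta T_i^{(m)})=K(\beta S_i^{(m)}),
\]
so the hypothesis $p_{i-1}^{(m+1)}\in K(\beta S_{i-1}^{(m+1)})$ yields $\tau_i^{(m+1)}(p_{i-1}^{(m+1)})\in K(\beta S_i^{(m)})$.

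Next I would fix the ambient semigroup. Since $S_{i-1}^{(m)}\cup S_i^{(m)}\subseteq T_i^{(m)}$, both $\beta S_{i-1}^{(m)}$ and $\beta S_i^{(m)}$ sit inside $\beta T_i^{(m)}$, and $S_i^{(m)}$ is a two-sided ideal of $T_i^{(m)}$ (any concatenation involving an $i$-variable word is again an $i$-variable word), so $\beta S_i^{(m)}$ is a two-sided ideal of $\beta T_i^{(m)}$. By Lemma~\ref{Kequal}, $K(\beta S_i^{(m)})=K(\beta T_i^{(m)})$ is then a two-sided ideal of $\beta T_i^{(m)}$. Setting $a=\tau_i^{(m+1)}(p_{i-1}^{(m+1)})\,p_{i-1}^{(m)}$ and $b=p_{i-1}^{(m)}\,\tau_i^{(m+1)}(p_{i-1}^{(m+1)})$, both products (which are in principle computed in $\beta T_i^{(m)}$) therefore land in $K(\beta S_i^{(m)})$ — multiplication on either side by the arbitrary element $p_{i-1}^{(m)}\in\beta T_i^{(m)}$ cannot escape the ideal.

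Now since $a\in K(\beta S_i^{(m)})$, $a$ lies in some minimal left ideal $L$ of $\beta S_i^{(m)}$; the set $\beta S_i^{(m)}\,a$ is then a nonempty left ideal of $\beta S_i^{(m)}$ contained in $L$, and equality follows from minimality. Hence $\beta S_i^{(m)}\tau_i^{(m+1)}(p_{i-1}^{(m+1)})p_{i-1}^{(m)}$ is a minimal left ideal of $\beta S_i^{(m)}$. Symmetrically, $p_{i-1}^{(m)}\tau_i^{(m+1)}(p_{i-1}^{(m+1)})\beta S_i^{(m)}=b\,\beta S_i^{(m)}$ is a minimal right ideal. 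Their intersection contains $ba$, hence is nonempty, so by the general result recalled in the introduction it is a group, and since both pieces lie in $K(\beta S_i^{(m)})$ so does the intersection. This is exactly $G_i^{(m)}$.

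\textbf{Main obstacle.} The only point that needs real justification is the first step: that $\tau_i^{(m+1)}(p_{i-1}^{(m+1)})$ falls inside $K(\beta S_i^{(m)})$ rather than merely inside $\beta S_i^{(m)}$ or $K(\beta T_i^{(m)})$. Everything else is ideal bookkeeping once Lemma~\ref{Kequal} is invoked. A small but worth‑noting point is that $p_{i-1}^{(m)}$ is only assumed to lie in $\beta S_{i-1}^{(m)}$ and not in its smallest ideal, so no minimality property of $p_{i-1}^{(m)}$ is available or needed; the argument relies entirely on the fact that multiplying a minimal–ideal element (in $\beta T_i^{(m)}$) by anything still yields a minimal–ideal element.
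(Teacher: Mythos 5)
Your proof is correct and follows the paper's own argument essentially step for step: push $p_{i-1}^{(m+1)}$ through the isomorphism $\tau_i^{(m+1)}$ to land in $K(\beta T_i^{(m)})=K(\beta S_i^{(m)})$ using Lemma~\ref{Kequal} at both ends, observe that multiplying by $p_{i-1}^{(m)}\in\beta T_i^{(m)}$ stays in the ideal so that $p_{i-1}^{(m)}\tau_i^{(m+1)}(p_{i-1}^{(m+1)})\beta S_i^{(m)}$ and $\beta S_i^{(m)}\tau_i^{(m+1)}(p_{i-1}^{(m+1)})p_{i-1}^{(m)}$ are a minimal right and a minimal left ideal, and then invoke the standard fact (cited in the paper as \cite[Theorem 1.61]{HS}) that such an intersection is a group contained in $K(\beta S_i^{(m)})$. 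The only cosmetic difference is that you spell out the minimality of the two one-sided ideals and the nonemptiness of the intersection, which the paper leaves implicit.
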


\begin{proof} We will show that $G_i^{(m)}$ is the intersection of a minimal right ideal and a minimal left ideal of $\beta S_i^{(m)}$ and hence by \cite[Theorem 1.61]{HS}, 
$G_i^{(m)}$ is a group contained in  $K(\beta S_i^{(m)})$. Notice first that $\beta S_{i-1}^{(m)}\cup\beta S_{i}^{(m)}\subseteq \beta T_{i}^{(m)}$ so that the products $p_{i-1}^{(m)}\tau^{(m+1)}_i(p^{(m+1)}_{i-1})$ and 
$\tau_i^{(m+1)}(p^{(m+1)}_{i-1})p_{i-1}^{(m)}$ are computed in $\beta T_{i}^{(m)}$.

Since $\tau_i^{(m+1)}$ is an isomorphism on $\beta T_{i-1}^{(m+1)}$, we have $\tau_i^{(m+1)}[K(\beta T_{i-1}^{(m+1)})]\break
=K(\beta T_{i}^{(m)})$.
Since $p_{i-1}^{(m+1)}\in K(\beta S_{i-1}^{(m+1)})$ and $K(\beta S_{i-1}^{(m+1)})=K(\beta T_{i-1}^{(m+1)})$
(Lemma \ref{Kequal} with the underlying alphabet taken to be $\A^{(m+1)}),$ 
it follows that $\tau_i^{(m+1)}(p_{i-1}^{(m+1)})\in K(\beta T_{i}^{(m)})$. Since
$p_{i-1}^{(m)}\in \beta T_{i-1}^{(m)}\subseteq \beta T_i^{(m)}$, we have that
$p_{i-1}^{(m)}\tau^{(m+1)}_i(p^{(m+1)}_{i-1})\in K(\beta T_i^{(m)})=K(\beta S_i^{(m)})$ so that
$p_{i-1}^{(m)}\tau^{(m+1)}_i(p^{(m+1)}_{i-1})\beta S_i^{(m)}$ is a minimal right ideal of
$\beta S_i^{(m)}$.  Similarly $ \beta S_i^{(m)}\tau_i^{(m+1)}(p^{(m+1)}_{i-1})p_{i-1}^{(m)}$ is a minimal
left ideal of $\beta S_i^{(m)}.$
\end{proof}

\begin{definition}\label{HRS} For $\alpha\in \ben\cup\{\omega\}$, a {\it reductive array of height $\alpha$\/}
over $\A$ is an $\alpha\times\omega$ array of minimal idempotents $\langle p_i^{(m)}\rangle_{i<\alpha}^{m<\omega}$ with
$p_i^{(m)}\in E\big(K(\beta S_i^{(m)})\big)$ satisfying the following conditions:
\begin{enumerate}
\item For each $m\in \omega$ the sequence $\langle p_i^{(m)}\rangle_{i<\alpha}$ 
is a reductive sequence of height $\alpha$ over $\A^{(m)}$.
\item $\tau_1^{(m)}(p^{(m)}_0)=p_1^{(m-1)}$ for each $m\in \ben$.
\item For each $m\in \omega$ and $i<\alpha$ with $i\geq 2$, $p_i^{(m)}$ is the identity of the group 
\[G_i^{(m)}=p_{i-1}^{(m)}\tau^{(m+1)}_i(p^{(m+1)}_{i-1})\beta S_i^{(m)}\cap \beta S_i^{(m)}\tau_i^{(m+1)}(p^{(m+1)}_{i-1})p_{i-1}^{(m)}\,.\]
\end{enumerate}
\end{definition}

\begin{definition}\label{Xij} Let $i,j,m\in \omega$, 
with $j<i$. $X_{i,j}^{(m)}$ will denote the set of words in $\varwordsj mij$ in 
which $v_j$ occurs only as the last letter. \end{definition}

\begin{lemma} \label{hutau} Let $m,n\in \ben$ and let 
$\vec p=\langle p_0,p_1,p_2,\ldots,p_n\rangle$ be a reductive sequence of height $n+1$ over
$\A^{(m)}$. Let $p_{n+1}$ be a minimal idempotent in $\beta S^{(m)}_{n+1}$ 
for which $p_{n+1}\leq p_n$.  Let $j\in \omega$ with $j\leq n$ and let $u=u_1u_2\cdots u_{n+1}\in \varwordsj m{n+1}j$.
\begin{itemize}
\item[(1)] If $u\notin X_{n+1,j}^{(m)}$, then $h_u(p_{n+1})=p_j$.
\item[(2)] If $u\in X_{n+1,j}^{(m)}$, then for $w\in T^{(m+1)}_n$, 
$h_u\big(\tau^{(m+1)}_{n+1}(w)\big)=\tau^{(m+1)}_j\big(h_u(w)\big)$.
\end{itemize}
\end{lemma}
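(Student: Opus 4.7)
For Claim~(1), the plan is to set $u' = u_1 u_2 \cdots u_n$ and first verify that $u' \in \varwordsj{m}{n}{j}$. Since $u \notin X_{n+1,j}^{(m)}$, the variable $v_j$ must occur at some position in $\{1,\ldots,n\}$: otherwise either its only occurrence in $u$ would be at position $n+1$ (putting $u$ in $X_{n+1,j}^{(m)}$) or $v_j$ would not appear in $u$ at all (contradicting $u \in \varwordsj{m}{n+1}{j}$). Strongness of $u$ then forces each $v_l$ with $l < j$ to occur in $u_1 u_2 \cdots u_n$ as well, since the first occurrence of $v_l$ precedes that of $v_j$. Hence $u'$ is a strong $j$-variable word of length $n$, and the reductive-sequence hypothesis at height $n+1$ gives $h_{u'}(p_n) = p_j$. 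Because $v_{n+1}$ never occurs in any element of $S^{(m)}_n$, the morphisms $h_u$ and $h_{u'}$ agree on $S^{(m)}_n$, whence $h_u(p_n) = p_j$.

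Now $p_{n+1} \leq p_n$ means $p_n p_{n+1} = p_{n+1} p_n = p_{n+1}$, and applying the continuous homomorphism extension of $h_u$ on $\beta T^{(m)}$ yields
\[ h_u(p_{n+1}) = p_j\, h_u(p_{n+1}) = h_u(p_{n+1})\, p_j. \]
Since $u$ is a strong $j$-variable word, $h_u$ maps $S^{(m)}_{n+1}$ into $S^{(m)}_j$, so $h_u(p_{n+1})$ is an idempotent of $\beta S^{(m)}_j$; the displayed equation places it in $p_j\, \beta S^{(m)}_j\, p_j$, which is the maximal group at $p_j$ because $p_j \in K(\beta S^{(m)}_j)$ is minimal. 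The only idempotent of a group is its identity, so $h_u(p_{n+1}) = p_j$, proving~(1). Claim~(2) is then a routine letter-by-letter check that both $h_u \circ \tau^{(m+1)}_{n+1}$ and $\tau^{(m+1)}_j \circ h_u$ are homomorphisms on $T^{(m+1)}_n$ that send each $a \in \A^{(m)}$ to $a$, send $x_{m+1}$ to $v_j$ (the left side via $h_u(v_{n+1}) = u_{n+1} = v_j$ using $u \in X_{n+1,j}^{(m)}$, the right side via $\tau^{(m+1)}_j(x_{m+1}) = v_j$), and send $v_i$ with $i \leq n$ to $u_i$, where the last equality uses that $u_i \in \A^{(m)} \cup \{v_1,\ldots,v_j\}$ contains no occurrence of $x_{m+1}$.

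The only real subtlety is the step showing $u' \in \varwordsj{m}{n}{j}$, and this is exactly where the hypothesis $u \notin X_{n+1,j}^{(m)}$ is indispensable: if $u \in X_{n+1,j}^{(m)}$, then $u'$ would be only a strong $(j-1)$-variable word, one would get $h_{u'}(p_n) = p_{j-1}$ rather than $p_j$, and the entire chain of equalities would collapse. Everything else is a clean application of the homomorphism property of the continuous ultrafilter extension together with the standard fact that the $H$-class of a minimal idempotent is a group.
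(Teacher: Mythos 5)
Your proof follows the paper's own argument: in (1) you pass to the truncation $u'=u_1u_2\cdots u_n$, use $u\notin X_{n+1,j}^{(m)}$ to see that $v_j$ (and hence, by strongness, each of $v_1,\ldots,v_j$) already occurs in $u'$, conclude $h_u(p_n)=h_{u'}(p_n)=p_j$, and finish via minimality of $p_j$; and (2) is the same letter-by-letter verification used in the paper. The one point to flag is that your invocation of the reductive-sequence hypothesis for $u'$ silently assumes $j<n$. When $j=n$, the set $\varwordsj{m}{n}{n}$ is not defined (the notation requires the lower index to be strictly less than the upper one), and the reductive-sequence condition speaks only of $h_w(p_i)=p_k$ for $k<i$, so the steps ``$u'\in\varwordsj{m}{n}{j}$'' and ``the reductive-sequence hypothesis gives $h_{u'}(p_n)=p_j$'' do not apply when $j=n$. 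The paper handles $j=n$ as a separate (easy) case: $u\notin X_{n+1,n}^{(m)}$ forces $u=v_1v_2\cdots v_nu_{n+1}$, so $h_u$ is the identity on $S^{(m)}_n$ and $h_u(p_n)=p_n$ directly. Adding that case makes your argument complete, and with it the proof coincides with the paper's.
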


\begin{proof} (1) Assume $u\notin X_{n+1,j}^{(m)}$. We have that
$h_u(p_{n+1})$ and $h_u(p_n)$ are both idempotents in $\beta S_j^{(m)}$ and
$h_u(p_{n+1})\leq h_u(p_n)$ because $h_u$ is a homomorphism.  Assume first that
$j<n$ and let $s=u_1u_2\cdots u_{n}$. Then $s\in\varwordsj mnj$ so $h_s(p_n)=p_j$ and
since $h_s$ and $h_u$ agree on $S_n^{(m)}$, $h_u(p_n)=p_j$. If $j=n$, then
$u=v_1v_2\cdots v_nu_{n+1}$ so $h_u$ is the identity on $S_n^{(m)}$ and again
$h_u(p_n)=p_j$.  Consequently, $h_u(p_{n+1})\leq p_j$ and $p_j$ is 
minimal in $\beta S_j^{(m)}$ so $h_u(p_{n+1})= p_j$.

(2) It suffices to show that
$h_u\big(\tau^{(m+1)}_{n+1}(l)\big)=\tau^{(m+1)}_j\big(h_u(l)\big)$ for each 
$l\in \A^{(m+1)}\cup\{v_1,v_2,\ldots,v_n\}$.
Now $h_u\big(\tau^{(m+1)}_{n+1}(x_{m+1})\big)=h_u(v_{n+1})=u_{n+1}=v_j$
and $\tau^{(m+1)}_j\big(h_u(x_{m+1})\big)=\tau^{(m+1)}_j(x_{m+1})=v_j$.
If $l\in \A^{(m)}$, then both sides leave $l$ fixed.
Finally if $i\in\nhat{n}$, then 
$h_u\big(\tau^{(m+1)}_{n+1}(v_i)\big)=h_u(v_i)=u_i$
and $\tau^{(m+1)}_j\big(h_u(v_i)\big)=\tau^{(m+1)}_{n+1}(u_i)=
u_i$ because $u_i\neq x_{m+1}$.
\end{proof}

\begin{lemma}\label{height2} Let $q\in E\big(K(\beta S_0)\big)$
and let $r\in E\big(K(\beta S_1)\big)$ such that
$r<q$. There is a reductive array $\langle p_i^{(m)}\rangle_{i<2}^{m<\omega}$
of height $2$ over $\A$ such that $p_0^{(0)}=q$ and $p_1^{(0)}=r$.\end{lemma}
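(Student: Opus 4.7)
The plan is a direct induction on $m$. Since the array has height $2$, condition (3) of Definition \ref{HRS} is vacuous, so I only need to maintain condition (1)---which in this case amounts to $p_1^{(m)} \le p_0^{(m)}$ together with $h_a(p_1^{(m)}) = p_0^{(m)}$ for every $a \in \A^{(m)}$, since $\varwordsj m10 = \A^{(m)}$---and condition (2), namely $\tau_1^{(m)}(p_0^{(m)}) = p_1^{(m-1)}$ for $m \ge 1$.

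For the base case $m = 0$, set $p_0^{(0)} = q$ and $p_1^{(0)} = r$. Membership in the appropriate minimal ideals and the inequality $r < q$ are exactly the hypotheses, so only the substitution identity remains. Apply Theorem \ref{blass}(1) with $S = S_0$, $T = S_0 \cup S_1$, and $F = \{h_a : a \in \A\}$; each $h_a$ is a homomorphism from $T$ into $S$ that is the identity on $S$, and $r \le q$, so I conclude $h_a(r) = q$ for every $a \in \A$.

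For the inductive step, suppose $p_0^{(m-1)}$ and $p_1^{(m-1)}$ have been produced. The excerpt notes that $\tau_1^{(m)}$ restricts to a topological and algebraic isomorphism from $\beta T_0^{(m)} = \beta S^{(m)}$ onto $\beta T_1^{(m-1)}$, and by Lemma \ref{Kequal} applied over the alphabet $\A^{(m-1)}$ we have $K(\beta T_1^{(m-1)}) = K(\beta S_1^{(m-1)})$. Because an isomorphism of compact right topological semigroups carries the smallest ideal onto the smallest ideal, this restriction bijects $K(\beta S^{(m)})$ with $K(\beta S_1^{(m-1)})$; I define $p_0^{(m)}$ to be the unique preimage of $p_1^{(m-1)}$, so $p_0^{(m)} \in E(K(\beta S_0^{(m)}))$ and condition (2) holds by construction. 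Then Lemma \ref{smaller} over the alphabet $\A^{(m)}$ (with the lemma's $k = 0$ and $m = 1$) supplies an idempotent $p_1^{(m)} \in E(K(\beta S_1^{(m)}))$ with $p_1^{(m)} < p_0^{(m)}$, and a second application of Theorem \ref{blass}(1)---this time with $S = S_0^{(m)}$, $T = S_0^{(m)} \cup S_1^{(m)}$, and $F = \{h_a : a \in \A^{(m)}\}$---gives $h_a(p_1^{(m)}) = p_0^{(m)}$ for every $a \in \A^{(m)}$, completing condition (1).

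The only delicate point is the assertion that the isomorphism $\tau_1^{(m)}|_{\beta S^{(m)}}$ restricts to a bijection between the smallest ideals, so that $p_0^{(m)}$ actually lies in the minimal ideal of $\beta S_0^{(m)}$; but this is a general fact about isomorphisms of compact right topological semigroups, and once it is in hand the construction is purely mechanical application of the machinery already developed (Theorem \ref{blass}, Lemma \ref{smaller}, Lemma \ref{Kequal}).
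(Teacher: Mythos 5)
Your proof is correct and follows essentially the same route as the paper's: set $p_0^{(0)}=q$, $p_1^{(0)}=r$, then build inductively in $m$ by letting $p_0^{(m)}$ be the $\tau_1^{(m)}$-preimage of $p_1^{(m-1)}$ (using Lemma \ref{Kequal} and the isomorphism of $\beta T_0^{(m)}$ with $\beta T_1^{(m-1)}$) and invoking Lemma \ref{smaller} to get $p_1^{(m)}<p_0^{(m)}$. The only presentational difference is that you discharge the substitution identity $h_a(p_1^{(m)})=p_0^{(m)}$ by citing Theorem \ref{blass}(1), whereas the paper repeats that short argument inline ($h_a(p_1^{(m)})\le h_a(p_0^{(m)})=p_0^{(m)}$ plus minimality of $p_0^{(m)}$); these are literally the same computation, so this is a matter of bookkeeping rather than a different method.
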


\begin{proof} Let $p_0^{(0)}=q$ and $p_1^{(0)}=r$. Let $m\in\ben$
and assume that we have chosen $\langle p_i^{(t)}\rangle_{i<2}^{t<m}$
such that for each $t<m$ and each $i\in\{0,1\}$, $p_i^{(t)}\in E\big(K(\beta S_i^{(t)})\big)$
and $p_1^{(t)}<p_0^{(t)}$.
By Lemma \ref{Kequal}, $p_1^{(m-1)}\in K(\beta T_1^{(m-1)})$.
Since $\tau_1^{(m)}$ is an isomorphism from $\beta T_0^{(m)}$ onto
$\beta T_1^{(m-1)}$, we may let $p_0^{(m)}$ be the unique
member of $E\big(K(\beta T_0^{(m)})\big)$ such that
$\tau_1^{(m)}(p_0^{(m)})=p_1^{(m-1)}$. By Lemma \ref{smaller}, we
may pick $p_1^{(m)}\in E\big(K(\beta S_1^{(m)})\big)$ such that
$p_1^{(m)}<p_0^{(m)}$.

We need to show that for each $u\in\varwordsj m10$, 
$h_u(p_1^{(m)})=p_0^{(m)}$ so let $u\in\varwordsj m10$.
Now $p_0^{(m)}$ and $p_1^{(m)}$ are in $\beta T_1^{(m)}$,
$p_1^{(m)}\leq p_0^{(m)}$, and $h_u$ is a homomorphism
so $h_u(p_1^{(m)})\leq h_u(p_0^{(m)})$. Since $h_u$ is the
identity on $\beta T_0^{(m)}$,
$h_u(p_0^{(m)})=p_0^{(m)}$ so
$h_u(p_1^{(m)})\leq p_0^{(m)}$.
Since $p_0^{(m)}$ is minimal in $\beta T_0^{(m)}$,
$h_u(p_1^{(m)})= p_0^{(m)}$ as required.\end{proof}

\begin{theorem}\label{hyperextend} Let $n\in\ben$ and assume that
$\langle p_i^{(m)}\rangle_{i<n+1}^{m<\omega}$ is a reductive array
of height $n+1$ over $\A$. There exist unique $p_{n+1}^{(m)}$ for each $m<\omega$
such that $\langle p_i^{(m)}\rangle_{i<n+2}^{m<\omega}$ is a reductive array
of height $n+2$ over $\A$.
\end{theorem}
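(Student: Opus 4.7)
My plan is to define, for each $m \in \omega$, the idempotent $p_{n+1}^{(m)}$ to be the identity of the group $G_{n+1}^{(m)}$ furnished by Lemma~\ref{Gisgroup}; its hypotheses are satisfied because the given array supplies $p_n^{(m)} \in \beta S_n^{(m)}$ and $p_n^{(m+1)} \in K(\beta S_n^{(m+1)})$. Uniqueness is then forced by Condition~(3) of Definition~\ref{HRS}. Condition~(2) does not involve the new row (it concerns only $p_0$ and $p_1$, already present in the array since $n+1 \geq 2$), so the remaining task is to verify Condition~(1) for each new $p_{n+1}^{(m)}$: namely (i) $p_{n+1}^{(m)} \leq p_n^{(m)}$ and (ii) $h_u(p_{n+1}^{(m)}) = p_j^{(m)}$ for every $j \leq n$ and every $u \in \varwordsj{m}{n+1}{j}$.

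For (i), I would write $p_{n+1}^{(m)} = qa = br$ with $a,b \in \beta S_{n+1}^{(m)}$, $q = p_n^{(m)}\tau_{n+1}^{(m+1)}(p_n^{(m+1)})$, and $r = \tau_{n+1}^{(m+1)}(p_n^{(m+1)})\, p_n^{(m)}$. Idempotence of $p_n^{(m)}$ yields $p_n^{(m)}q = q$ and $r p_n^{(m)} = r$, hence $p_n^{(m)} p_{n+1}^{(m)} = p_{n+1}^{(m)} = p_{n+1}^{(m)} p_n^{(m)}$.

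The main obstacle is (ii). If $u \notin X_{n+1,j}^{(m)}$ (which vacuously includes $j=0$), this is precisely Lemma~\ref{hutau}(1). Suppose therefore $u \in X_{n+1,j}^{(m)}$, so $j \geq 1$ and $u = u'v_j$ with $u' \in \varwordsj{m}{n}{j-1}$. Since $n$-variable words contain no occurrence of $v_{n+1}$, $h_u$ agrees with $h_{u'}$ on $\beta S_n^{(m)}$ and on $\beta S_n^{(m+1)}$, so the reductive-sequence property at rows $m$ and $m+1$ yields $h_u(p_n^{(m)}) = p_{j-1}^{(m)}$ and $h_u(p_n^{(m+1)}) = p_{j-1}^{(m+1)}$. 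Lemma~\ref{hutau}(2), extended by continuity to $\beta T_n^{(m+1)}$, then gives $h_u(\tau_{n+1}^{(m+1)}(p_n^{(m+1)})) = \tau_j^{(m+1)}(p_{j-1}^{(m+1)})$. Applying $h_u$ to the factorizations $p_{n+1}^{(m)} = qa = br$ produces
\[ h_u(p_{n+1}^{(m)}) = q_j\, h_u(a) = h_u(b)\, r_j, \]
where $q_j = p_{j-1}^{(m)}\tau_j^{(m+1)}(p_{j-1}^{(m+1)})$ and $r_j = \tau_j^{(m+1)}(p_{j-1}^{(m+1)})\, p_{j-1}^{(m)}$.

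The closing ingredient is that for $u \in X_{n+1,j}^{(m)}$, $h_u$ sends $S_{n+1}^{(m)}$ into $S_j^{(m)}$: substituting $v_{n+1} \mapsto v_j$ places $v_j$ in the image of any $(n+1)$-variable word, and $u'$ carries each $v_i$ with $1 \leq i \leq j-1$, so every variable $v_1,\ldots,v_j$ survives. Hence $h_u(a), h_u(b) \in \beta S_j^{(m)}$, which places $h_u(p_{n+1}^{(m)})$ inside $q_j\, \beta S_j^{(m)} \cap \beta S_j^{(m)}\, r_j = G_j^{(m)}$. Since $h_u(p_{n+1}^{(m)})$ is idempotent and $G_j^{(m)}$ is a group with unique idempotent $p_j^{(m)}$, we must have $h_u(p_{n+1}^{(m)}) = p_j^{(m)}$, completing the verification.
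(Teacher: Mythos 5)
Your proposal follows the paper's own argument quite closely: define $p_{n+1}^{(m)}$ as the identity of $G_{n+1}^{(m)}$ (uniqueness being forced by Definition~\ref{HRS}(3)), verify $p_{n+1}^{(m)}\leq p_n^{(m)}$ from the factorizations, dispatch $u\notin X_{n+1,j}^{(m)}$ via Lemma~\ref{hutau}(1), and for $u\in X_{n+1,j}^{(m)}$ push $h_u$ through the factorizations of $p_{n+1}^{(m)}$ to land in $G_j^{(m)}$. Your observation that $j=0$ is subsumed into the ``$u\notin X$'' branch (because $X_{n+1,0}^{(m)}$ is vacuously empty) is correct and slightly streamlines the paper's treatment, which spells out $j=0$ separately.

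There is, however, a genuine gap at $j=1$. Your closing step asserts that ``$G_j^{(m)}$ is a group with unique idempotent $p_j^{(m)}$,'' but this is only available from the hypotheses when $j\geq 2$: Lemma~\ref{Gisgroup} is stated for $i\in\ben\setminus\{1\}$, and Definition~\ref{HRS}(3) only identifies $p_i^{(m)}$ as the identity of $G_i^{(m)}$ for $i\geq 2$. For $j=1$, the set $q_1\,\beta S_1^{(m)}\cap\beta S_1^{(m)}\,r_1$ is not, a priori, covered by either result, so the conclusion that its only idempotent is $p_1^{(m)}$ is unjustified as written. To close the gap one must invoke Definition~\ref{HRS}(2), which gives $\tau_1^{(m+1)}(p_0^{(m+1)})=p_1^{(m)}$; then, since $p_1^{(m)}\leq p_0^{(m)}$, the set collapses to $p_1^{(m)}\beta S_1^{(m)}\cap\beta S_1^{(m)}p_1^{(m)}$, which is a group with identity $p_1^{(m)}$ because $p_1^{(m)}$ is a minimal idempotent of $\beta S_1^{(m)}$. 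That is exactly the separate $j=1$ computation the paper carries out; your proof needs it too.
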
 

\begin{proof} For each $m<\omega$, let $p_{n+1}^{(m)}$ be the identity
of the group $G_{n+1}^{(m)}$. This is required by Definition \ref{HRS}(3), 
so the uniqueness is satisfied. Let $m<\omega$ be given. We need to show that
$\langle p_i^{(m)}\rangle_{i<n+2}$ is a reductive sequence over $\A^{(m)}$.
Since $G_{n+1}^{(m)}\subseteq p_n^{(m)}\beta S_{n+1}^{(m)}\cap \beta S_{n+1}^{(m)}p_n^{(m)}$ we have
that $p_{n+1}^{(m)}\leq p_n^{(m)}$. And by Lemma \ref{Gisgroup} we have
that $p_{n+1}^{(m)}\in E\big(K(\beta S_{n+1}^{(m)})\big)$.

Now let $0\leq j<i<n+2$ and let $u\in\varwordsj mij$. We need to show that
$h_u(p_{i}^{(m)})=p_j^{(m)}$. If $i<n+1$, this holds by assumption, so
assume that $i=n+1$ so that $u\in\varwordsj m{n+1}j$.
 If $j=0$, then $h_u$ is the identity
on $\beta S_0^{(m)}$ so $h_u(p_{n+1}^{(m)})\leq h_u(p_0^{(m)})=p_0^{(m)}$
and $h_u(p_{n+1}^{(m)})\in\beta S_0^{(m)}$ so $h_u(p_{n+1}^{(m)})=p_0^{(m)}$.
So assume that $j\geq 1$.
If $u\notin X_{n+1,j}^{(m)}$, then by Lemma \ref{hutau}, 
$h_u(p_{n+1}^{(m)})=p_j^{(m)}$. 

So we assume that $u=u_1u_2\cdots u_{n+1}\in X_{n+1,j}^{(m)}$ and let
$s=u_1u_2\cdots u_{n}$.
Then we have that $s\in \varwordsj mn{j-1}
\subseteq \varwordsj {m+1}n{j-1}$ and hence 
$h_u(p_n^{(m)})=h_s(p_n^{(m)})=p_{j-1}^{(m)}$ and 
$h_u(p_n^{(m+1)})=h_s(p_n^{(m+1)})=p_{j-1}^{(m+1)}$.

Combined with  Lemma \ref{hutau}, we have that 
$$h_u\big(\tau_{n+1}^{(m+1)}(p_n^{(m+1)})\big)
=\tau_j^{(m+1)}\big(h_u(p_n^{(m+1)})\big)=\tau_j^{(m+1)}\big(p_{j-1}^{(m+1)}\big)\,.$$
So as 
$p_{n+1}^{(m)}\in G_{n+1}^{(m)}=
p_{n}^{(m)}\tau^{(m+1)}_{n+1}(p^{(m+1)}_{n})\beta S_{n+1}^{(m)}\cap 
\beta S_{n+1}^{(m)}\tau_{n+1}^{(m+1)}(p^{(m+1)}_{n})p_{n}^{(m)}$ we deduce that 
$h_u(p_{n+1}^{(m)})\in 
p_{j-1}^{(m)}\tau^{(m+1)}_j(p^{(m+1)}_{j-1})\beta S_{j}^{(m)}\cap 
\beta S_{j}^{(m)}\tau_{j}^{(m+1)}(p^{(m+1)}_{j-1})p_{j-1}^{(m)}\,.$
If $j\geq 2$, this says that $h_u(p_{n+1}^{(m)})$ is an idempotent in $G_j^{(m)}$
and $p_j^{(m)}$ is the identity of $G_j^{(m)}$ so 
$h_u(p_{n+1}^{(m)})=p_j^{(m)}$ as required.

Finally, assume that $j=1$. Then 
$$\begin{array}{rl}
h_u(p_{n+1}^{(m)})&\in p_0^{(m)}\tau_1^{(m+1)}(p_0^{(m+1)})\beta S_1^{(m)}\cap
\beta S_1^{(m)}\tau_1^{(m+1)}(p_0^{(m+1)})p_0^{(m)}\\
&=p_0^{(m)}p_1^{(m)}\beta S_1^{(m)}\cap
\beta S_1^{(m)}p_1^{(m)}p_0^{(m)}\\
&=p_1^{(m)}\beta S_1^{(m)}\cap\beta S_1^{(m)}p_1^{(m)}\,.\end{array}$$
Since $p_1^{(m)}$ is minimal in $\beta S_1^{(m)}$,
$p_1^{(m)}\beta S_1^{(m)}\cap\beta S_1^{(m)}p_1^{(m)}$ is a group with identity
$p_1^{(m)}$, so $h_u(p_{n+1}^{(m)})=p_1^{(m)}$.\end{proof}

Combining Lemma~\ref{height2} and Theorem~\ref{hyperextend} we obtain:

\begin{corollary}\label{hyperinf} For each $p\in E\big(K(\beta S_0)\big)$ 
there is a reductive array $\langle p_i^{(m)}\rangle_{i<\omega}^{m<\omega}$
of height $\omega$ such that $p_0^{(0)}=p$. Morever $p_1^{(0)}$ may be taken to be any minimal idempotent of $\beta S_1$
such that $p_1^{(0)}\leq p$.
  \end{corollary}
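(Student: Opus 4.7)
The plan is to verify that Corollary \ref{hyperinf} is a straightforward amalgamation of Lemma \ref{height2} and Theorem \ref{hyperextend}, proceeding by induction on the height of a reductive array.

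First, I would set up the base case of height $2$. Given $p\in E\big(K(\beta S_0)\big)$, put $p_0^{(0)}=p$. If a specific minimal idempotent $p_1^{(0)}\in\beta S_1$ with $p_1^{(0)}\leq p$ is prescribed, use it; otherwise, appeal to Lemma \ref{smaller} with $k=0$ and $m=1$ to choose any $p_1^{(0)}\in E\big(K(\beta S_1)\big)$ with $p_1^{(0)}<p$. In either case the inequality is strict automatically because $\beta S_0\cap\beta S_1=\emptyset$. Lemma \ref{height2} then supplies a reductive array $\langle p_i^{(m)}\rangle_{i<2}^{m<\omega}$ of height $2$ over $\A$ whose first column has $p_0^{(0)}=p$ and $p_1^{(0)}$ as specified.

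Next, I would build the full height-$\omega$ array by an obvious induction. Suppose that for some $n\geq 1$ we have already constructed a reductive array $A_n=\langle p_i^{(m)}\rangle_{i<n+1}^{m<\omega}$ of height $n+1$ over $\A$ extending the height-$2$ array produced above. Theorem \ref{hyperextend} provides a unique row $\langle p_{n+1}^{(m)}\rangle_{m<\omega}$ such that $A_{n+1}=\langle p_i^{(m)}\rangle_{i<n+2}^{m<\omega}$ is a reductive array of height $n+2$. Since the extension only adjoins a new bottom row while leaving all earlier rows intact, the arrays $A_n$ are nested; one may therefore take their union.

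Finally, the union $\langle p_i^{(m)}\rangle_{i<\omega}^{m<\omega}$ is a reductive array of height $\omega$: every condition in Definition \ref{HRS} is local, involving only rows of index bounded by some finite number, so it can be checked inside the corresponding finite-height array $A_n$ where it holds by construction. This delivers the required reductive array with $p_0^{(0)}=p$ and the prescribed $p_1^{(0)}$.

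There is essentially no obstacle here, since all the real work has already been done in Lemmas \ref{Gisgroup}, \ref{smaller}, \ref{height2} and Theorem \ref{hyperextend}; the only point requiring a word of care is noting that the extension in Theorem \ref{hyperextend} does not perturb previously chosen entries, which is immediate from its statement (the new entries $p_{n+1}^{(m)}$ are fresh identities of the groups $G_{n+1}^{(m)}$, and the existing rows are used only as data).
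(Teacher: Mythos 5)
Your proof is correct and follows exactly the route the paper intends: the paper's entire justification for this corollary is the phrase ``Combining Lemma~\ref{height2} and Theorem~\ref{hyperextend},'' and you have simply unpacked that into the base case (Lemma~\ref{height2}, with Lemma~\ref{smaller} supplying $p_1^{(0)}$ when none is prescribed), the inductive extension step (Theorem~\ref{hyperextend}), and the observation that the conditions in Definition~\ref{HRS} are local so the union works.
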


\begin{corollary}\label{infGR} 
Let $p$ be a minimal idempotent in $\beta S_0$.  
There is a sequence $\langle p_n\rangle_{n=0}^{\infty}$ such that \begin{itemize} 
\item[(1)] $p_0=p\,$;
\item[(2)] for each $n\in\ben$, $p_n$ is a minimal idempotent of
$\beta \widetilde S_n$;
\item[(3)] for each $n\in\ben$, $p_n\leq p_{n-1}$;
\item[(4)] for each $n\in\ben$, each $j\in\ohat{n-1}$, and each $u\in\varwords nj$,
$h_u(p_n)=p_j$.
\end{itemize}

\noindent Further, $p_1$ can be any minimal idempotent of $\beta S_1$
such that $p_1\leq p_0$. \end{corollary}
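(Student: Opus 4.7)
The plan is to obtain the sequence $\langle p_n\rangle_{n=0}^\infty$ as the zeroth column of a reductive array. First I would invoke Corollary~\ref{hyperinf} with the given minimal idempotent $p\in E\big(K(\beta S_0)\big)$, using the supplementary clause of that corollary to specify $p_1^{(0)}$ as the prescribed minimal idempotent of $\beta S_1$ with $p_1^{(0)}\leq p$. This produces a reductive array $\langle p_i^{(m)}\rangle_{i<\omega}^{m<\omega}$ of height $\omega$ over $\A$. Then I would set $p_n:=p_n^{(0)}$ for each $n\in\omega$.

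By Definition~\ref{HRS}(1), the resulting sequence $\langle p_n\rangle_{n=0}^\infty$ is a reductive sequence of height $\omega$ over $\A^{(0)}=\A$. Unpacking the definition of reductive sequence, this immediately yields clause~(3), namely $p_n\leq p_{n-1}$, as well as clause~(4), namely $h_u(p_n)=p_j$ for every $j\in\ohat{n-1}$ and every $u\in\varwords nj$. Clause~(1) is satisfied by the choice $p_0^{(0)}=p$, and the final assertion about $p_1$ is handled by the freedom in Corollary~\ref{hyperinf} of prescribing $p_1^{(0)}$.

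The only remaining point is clause~(2): that each $p_n$ is a minimal idempotent of $\beta\widetilde S_n$ rather than merely of $\beta S_n$. This is exactly what Lemma~\ref{varik} provides, since it asserts $p_t\in E\big(K(\beta\widetilde S_t)\big)$ for every term of a reductive sequence.

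The main obstacle is therefore not here at all; all the work has been pushed back into Corollary~\ref{hyperinf}, whose construction rests on Theorem~\ref{hyperextend} (defining $p_{n+1}^{(m)}$ as the identity of the group $G_{n+1}^{(m)}$ and verifying that the extension respects the substitution maps $h_u$) and Lemma~\ref{height2} (the base step). Once those are in hand, Corollary~\ref{infGR} is essentially a matter of reading off the first column and appealing to Lemma~\ref{varik} to sharpen minimality from $\beta S_n$ to $\beta\widetilde S_n$.
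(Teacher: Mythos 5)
Your proposal is correct and follows essentially the same route as the paper: take the zeroth column of the reductive array from Corollary~\ref{hyperinf}, observe that the definition of a reductive sequence delivers clauses (1), (3), and (4), and invoke Lemma~\ref{varik} to upgrade minimality from $\beta S_n$ to $\beta\widetilde S_n$. You also correctly note that all substantive work is absorbed by Lemma~\ref{height2} and Theorem~\ref{hyperextend} via Corollary~\ref{hyperinf}.
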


\begin{proof} Let $\langle p_i^{(m)}\rangle_{i<\omega}^{m<\omega}$ be
as guaranteed by Corollary \ref{hyperinf} and for each $i<\omega$ let
$p_i=p_i^{(0)}$. By Lemma \ref{varik} each $p_n\in E\big(K(\beta\widetilde S_n)\big)$.
\end{proof}

For several stronger combinatorial consequences of Corollary \ref{infGR}, see 
Sections 3 and 4 of \cite{CHS}.

To derive the following extension of Theorem \ref{GRth},
we need to restrict to a finite alphabet.

\begin{corollary}\label{corGR} Assume that $\A$ is finite and for
each $m<\omega$, let $\varphi_m$ be a finite coloring of $\widetilde S_m$. For
each $m<\omega$, there exists a central subset $C_m$ of $\widetilde S_m$ such that
\begin{itemize}
\item[(1)] $\varphi_m$ is constant on $C_m$ and
\item[(2)] whenever $n\in\ben$, the set of all $w\in \widetilde S_n$  
such that  for each $m<n$,\hfill\break $\{w(u):u\in\varwords nm\}\subseteq C_m$ is 
central in $\widetilde S_n$
\end{itemize}
\end{corollary}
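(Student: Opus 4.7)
The plan is to let Corollary \ref{infGR} do essentially all of the work. First I would apply Corollary \ref{infGR} to any minimal idempotent of $\beta S_0$ (for instance one containing a $\varphi_0$-monochromatic set) to obtain a sequence $\langle p_n\rangle_{n=0}^\infty$ such that for each $n$, $p_n$ is a minimal idempotent of $\beta\widetilde S_n$, and for all $j\leq n-1$ and all $u\in\varwords nj$ the relation $h_u(p_n)=p_j$ holds.

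For each $m<\omega$, since $\varphi_m$ takes only finitely many values on $\widetilde S_m$ and $p_m\in\beta\widetilde S_m$ is an ultrafilter, some $\varphi_m$-color class $C_m\in p_m$. Because $p_m$ is a minimal idempotent of $\beta\widetilde S_m$, the set $C_m$ is central in $\widetilde S_m$, which establishes (1).

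For (2), fix $n\in\ben$. The key observation is that for each $m<n$ and each $u\in\varwords nm$, continuity of $h_u\colon\beta\widetilde S_n\to\beta\widetilde S_m$ together with $h_u(p_n)=p_m$ gives $C_m\in h_u(p_n)=p_m$, hence $h_u^{-1}[C_m]\in p_n$. Because $\A$ is finite, $\varwords nm$ is finite for every $m<n$, so the intersection
\[
E_n=\bigcap_{m=0}^{n-1}\bigcap_{u\in\varwords nm}h_u^{-1}[C_m]
\]
is a finite intersection of members of $p_n$, and therefore $E_n\in p_n$. Unwinding the definition, $E_n$ is exactly the set of $w\in\widetilde S_n$ such that $\{w(u):u\in\varwords nm\}\subseteq C_m$ for every $m<n$. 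Since $p_n$ is a minimal idempotent of $\beta\widetilde S_n$, $E_n$ is central in $\widetilde S_n$, which gives (2).

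I do not anticipate any serious obstacle; the only point that deserves care is verifying that the finitely many continuous extensions $h_u$ restrict correctly from $\beta\widetilde S_n$ into $\beta\widetilde S_m$ (using that $h_u[\widetilde S_n]\subseteq\widetilde S_m$ for $u\in\varwords nm$, as noted immediately after Definition~\ref{defnvar}), so that $h_u(p_n)=p_j$ from Corollary~\ref{infGR} can legitimately be combined with $C_m\in p_m$ to conclude $h_u^{-1}[C_m]\in p_n$.
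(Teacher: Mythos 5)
Your proof is correct and follows essentially the same path as the paper's: invoke Corollary~\ref{infGR} to get the sequence $\langle p_n\rangle$, choose monochromatic $C_m\in p_m$, and observe that $\bigcap_{m<n}\bigcap_{u\in\varwords nm}h_u^{-1}[C_m]\in p_n$ because $h_u(p_n)=p_m$, with minimality of $p_n$ giving centrality. The only difference is that you spell out the finiteness of $\varwords nm$ and the restriction of $h_u$ explicitly, which the paper leaves implicit.
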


\begin{proof} Pick $\langle p_m\rangle_{m<\omega}$ as guaranteed 
by Corollary \ref{infGR} and for each $m<\omega$ pick $C_m\in p_m$ with
$C_m\subseteq \widetilde S_m$ such that
$\varphi_m$ is constant on $C_m$. Let $n\in\ben$ be given.
For each $m<n$ and each $u\in\varwords nm$,
$h_u(p_n)=p_m$. Let $D=\bigcap_{m<n}\bigcap(h_u^{-1}[C_m]:u\in\varwords nm)$.
Then $D\in p_n$. 
\end{proof}

\bibliographystyle{plain}

\begin{thebibliography}{9}

\bibitem{BBH} V. Bergelson, A. Blass, and N. Hindman, 
{\it Partition theorems for spaces of
variable words\/}, Proc.\  London Math.\  Soc.\   {\bf 68} (1994), 449-476.

\bibitem{CHS} T. Carlson, N. Hindman, and D.\ Strauss,
{\it An infinitary extension of the
Graham-Rothschild Parameter Sets Theorem\/}, Trans.\ Amer.\ Math.\ 
Soc.\ {\bf 358} (2006), 3239-3262. 

\bibitem{DPRV} W. Deuber, H. Pr\"omel, B. Rothschild, and B. Voigt,
{\it A restricted version of Hales-Jewett's theorem\/}, Finite and
infinite sets, Vol.\ I, 231-246,  Colloq.\ Math.\ Soc.\ J\'anos
Bolyai, {\bf 37}, North-Holland, Amsterdam, 1984.


\bibitem{GR} R. Graham and B. Rothschild, {\it Ramsey's Theorem for
$n$-parameter sets\/}, Trans.\  Amer.\  Math.\  Soc.\  {\bf 159} (1971), 257-292.


\bibitem{HJ} A. Hales and R. Jewett, {\it Regularity and positional
games\/}, Trans.\  Amer.\  Math.\  Soc.\  {\bf 106} (1963), 222-229.


\bibitem{HS} N. Hindman and D. Strauss, {\it Algebra in the
Stone-\v Cech compactification: theory and applications, second edition\/},
de Gruyter, Berlin, 2012.

\end{thebibliography}

\end{document}